\newtheorem{theorem}{Theorem}[section]
\newtheorem{lemma}[theorem]{Lemma}
\newtheorem{definition}[theorem]{Definition}
\numberwithin{equation}{section} 
\def\A{\mathcal{A}}
\def\D{\mathcal{D}}
\def\S{\mathcal{S}}
\def\Z{\mathbb{Z}}
\def\R{\mathbb{R}}
\def\Rn{\mathbb{R}^n}
\def\BMO{\operatorname{BMO}}
\def\loc{\operatorname{loc}}
\def\supp{\operatorname{supp}}
\let \o=w
\begin{document}

\title[Square functions associated with operators]
  {Weak and strong types estimates for square functions associated with operators}

\author{Mingming Cao}
\address{Mingming Cao\\
Instituto de Ciencias Matem\'aticas CSIC-UAM-UC3M-UCM\\
Con\-se\-jo Superior de Investigaciones Cient{\'\i}ficas\\
C/ Nicol\'as Cabrera, 13-15\\
E-28049 Ma\-drid, Spain} \email{mingming.cao@icmat.es}

\author{Zengyan Si}
\address{Zengyan Si\\
School of Mathematics and Information Science\\
Henan Polytechnic University\\
Jiaozuo 454000\\
People's Republic of China} \email{zengyan@hpu.edu.cn}

\author{Juan Zhang}
\address{Juan Zhang\\
School of Science\\
Beijing Forestry University\\
Beijing, 100083 \\
People's Republic of China}\email{juanzhang@bjfu.edu.cn}

\thanks{The first author acknowledges financial support from the Spanish Ministry of Science and Innovation, through the ``Severo Ochoa Programme for Centres of Excellence in R\&D'' (SEV-2015-0554) and from the Spanish National Research Council, through the ``Ayuda extraordinaria a Centros de Excelencia Severo Ochoa'' (20205CEX001).The second author was sponsored by  Natural Science Foundation of Henan(No.202300410184), the Key Research Project for Higher Education in Henan Province(No.19A110017) and the Fundamental Research Funds for the Universities of Henan Province(No.NSFRF200329). The third author was supported by the Fundamental Research Funds for the Central Universities (No.BLX201926). }

\subjclass[2010]{42B20, 42B25}

\keywords{Square functions,
Bump conjectures,
Mixed weak type estimates,
Local decay estimates}

\date{November 19, 2020}

\begin{abstract}
Let $L$ be a linear operator in $L^2(\Rn)$  which generates a semigroup $e^{-tL}$ whose kernels $p_t(x,y)$ satisfy the
Gaussian upper bound. In this paper, we investigate several kinds of weighted norm inequalities for the conical square function $S_{\alpha,L}$ associated with an abstract operator $L$. We first establish two-weight inequalities including bump  estimates, and Fefferman-Stein inequalities with arbitrary weights. We also present the local decay estimates using the extrapolation techniques, and the mixed weak type estimates corresponding Sawyer's conjecture by means of a Coifman-Fefferman inequality. Beyond that, we consider other weak type estimates including the restricted weak-type $(p, p)$ for $S_{\alpha, L}$ and the endpoint estimate for commutators of $S_{\alpha, L}$. Finally, all the conclusions aforementioned can be applied to a number of square functions associated to $L$.
\end{abstract}

\maketitle

\section{Introduction}\label{Introduction}

Given an operator $L$, the conical square function $S_{\alpha,L}$ associated with $L$ is defined by 
\begin{align}\label{def:SaL}
S_{\alpha,L}(f)(x) :=\bigg(\iint_{\Gamma_{\alpha}(x)}|t^mLe^{-t^mL}f(y)|^2\frac{dydt}{t^{n+1}}\bigg)^{\frac12}, 
\end{align}
where $\Gamma_{\alpha}(x)=\{(x,t)\in \Rn \times (0,\infty):|x-y|<\alpha t\}$. In particular, if $m=2$ and $L=-\Delta$, $S_{\alpha,L}$ is the classical area integral function. The conical square functions associated with abstract operators played an important role in harmonic analysis. For example, by means of $S_{\alpha, L}$, Auscher et al. \cite{ADM} introduced the Hardy space $H^1_L$ associated with an operator $L$. Soon after, Duong and Yan \cite{DY} showed that $\BMO_{L^*}$ is the dual space of the Hardy space $H^1_L$, which can be seen a generalization of Fefferman and Stein's result on the duality between  $H^1$ and $\BMO$ spaces. Later, the theory of function spaces associated with operators has been developed and generalized to many other different settings, see for example \cite{DL, HM, HLMMY, LW}. Recently, Martell and Prisuelos-Arribas \cite{MP-1} studied the weighted norm inequalities for conical square functions. More specifically, they established boundedness and comparability in weighted Lebesgue spaces of different square functions using the Heat and Poisson semigroups. Using these square functions, they \cite{MP-2} define several weighted Hardy spaces $H_L^1(w)$ and showed that they are one and the same in view of the fact that the square functions are comparable in the corresponding weighted spaces. Very recently, Bui and Duong \cite{BD} introduced several types of square functions associated with operators and established the sharp weighted estimates.  

In this paper, we continue to investigate several kinds of weighted norm inequalities for such operators, including bump  estimates, Fefferman-Stein inequalities with arbitrary weights, the local decay estimates, the mixed weak type estimates corresponding Sawyer's conjecture. Beyond that, we consider other weak type estimates including the restricted weak-type $(p, p)$ estimates and the endpoint estimate for the corresponding commutators. For more information about the progress of these estimates, see \cite{CXY, F2, OPR, PW, MW, S83} and the reference therein.

Suppose that $L$ is  an operator which satisfies the following properties:
\begin{enumerate}
\item[(A1)] $L$ is a closed densely defined operators of type $\omega$ in $L^2(\mathbb{R}^n)$ with $0\leq \omega< \pi/ 2$, and it has a bounded $H_\infty$-functional calculus in $L^2(\mathbb{R}^n)$.
\vspace{0.2cm}
\item[(A2)] The kernel $p_t(x,y)$ of $e^{-tL}$ admits a Gaussian upper bound. That is, there exists $m\geq 1$ and $C,c>0$ so that for all $x,y\in \mathbb{R}^n$ and $t>0,$
$$|p_t(x,y)|\leq \frac{C}{t^{n/ m}}\exp\bigg(-\frac{|x-y|^{m/(m-1)}}{c \, t^{1/(m-1)}}\bigg).$$
\end{enumerate}

Examples of the operator $L$ which satisfies condition (A1) and (A2) include: Laplacian $-\Delta$ on $\mathbb{R}^n$, or the Laplace operator on an open connected domain
with Dirichlet boundary conditions, or the homogeneous sub-Laplacian on a homogeneous group; Schr\"{o}dinger operator $L=-\Delta+V$  with a nonnegative potential $0\leq V \in L^1_{\loc}(\Rn)$.

The main results of this paper can be stated as follows. We begin with the bump estimates for $S_{\alpha, L}$.
\begin{theorem}\label{thm:Suv}
Let $1<p<\infty$, and let $S_{\alpha, L}$ be defined in \eqref{def:SaL} with $\alpha \ge 1$ and $L$ satisfying {\rm (A1)} and {\rm (A2)}. Given Young functions $A$ and $B$, we denote
\begin{equation*}
\|(u, v)\|_{A, B, p} :=
\begin{cases}
\sup\limits_{Q} \|u^{\frac1p}\|_{p, Q} \|v^{-\frac1p}\|_{B, Q}, & \text{if } 1<p \le 2,
\\
\sup\limits_{Q} \|u^{\frac2p}\|_{A, Q}^{\frac12} \|v^{-\frac1p}\|_{B,Q}, & \text{if } 2<p<\infty.
\end{cases}
\end{equation*}
If the pair $(u, v)$ satisfies $||(u, v)||_{A, B, p}<\infty$ with $\bar{A} \in B_{(p/2)'}$ and $\bar{B} \in B_p$, then
\begin{align}\label{eq:SLp}
\|S_{\alpha,L}(f)\|_{L^p(u)} &\lesssim \alpha^{n} \mathscr{N}_p \|f\|_{L^p(v)},
\end{align}
where
\begin{equation*}
\mathscr{N}_p  :=
\begin{cases}
||(u, v)||_{A,B,p} [\bar{B}]_{B_p}^{\frac1p}, & \text{if } 1<p \le 2,
\\
||(u, v)||_{A,B,p} [\bar{A}]_{B_{(p/2)'}}^{\frac12-\frac1p} [\bar{B}]_{B_p}^{\frac1p}, & \text{if } 2<p<\infty.
\end{cases}
\end{equation*}
\end{theorem}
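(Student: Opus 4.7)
The plan is to deduce Theorem~\ref{thm:Suv} from a sparse domination for $S_{\alpha,L}$ combined with the standard Orlicz-bump estimates for sparse forms. Under hypotheses (A1)--(A2), the Gaussian kernel bound yields, via a Lerner-type stopping-time argument adapted to the semigroup, a sparse family $\mathcal{S}$ of dyadic cubes such that the pointwise quadratic sparse bound
\[
S_{\alpha,L}(f)(x) \lesssim \alpha^{n} \bigg( \sum_{Q\in\mathcal{S}} \langle |f| \rangle_{Q}^{2}\,\mathbf{1}_{Q}(x) \bigg)^{1/2}
\]
holds; equivalently, after squaring, there is the bilinear sparse control
\[
\int (S_{\alpha,L}f)^{2}\,h\,dx \lesssim \alpha^{2n} \sum_{Q\in\mathcal{S}} \langle f \rangle_{Q}^{2}\,\langle h \rangle_{Q}\,|Q|.
\]
I would either take this from earlier in the paper or extract it directly from (A2) using $L^{2}$ off-diagonal estimates for $t^{m}Le^{-t^{m}L}$.

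For the case $1<p\le 2$, the inequality $(\sum a_{i})^{p/2}\le \sum a_{i}^{p/2}$, valid since $p/2\le 1$, applied to the pointwise bound gives
\[
\|S_{\alpha,L}f\|_{L^{p}(u)}^{p} \lesssim \alpha^{np} \sum_{Q\in\mathcal{S}} \langle |f| \rangle_{Q}^{p}\, u(Q).
\]
Applying the Orlicz H\"older inequality $\langle |f|\rangle_{Q}\le \|v^{-1/p}\|_{B,Q}\|fv^{1/p}\|_{\bar{B},Q}$ and using $u(Q)=|Q|\,\|u^{1/p}\|_{p,Q}^{p}$, the two-weight bump condition pulls out a factor of $\|(u,v)\|_{A,B,p}^{p}$, leaving $\sum_{Q\in\mathcal{S}} \|fv^{1/p}\|_{\bar{B},Q}^{p}|Q|$. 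Sparseness converts this into $\int (M_{\bar{B}}(fv^{1/p}))^{p}\,dx$, and the hypothesis $\bar{B}\in B_{p}$ yields $\|M_{\bar{B}}\|_{L^{p}\to L^{p}}\lesssim [\bar{B}]_{B_{p}}^{1/p}$ by P\'erez's theorem. Taking $p$-th roots produces exactly the factor $\|(u,v)\|_{A,B,p}[\bar{B}]_{B_{p}}^{1/p}$ claimed in $\mathscr{N}_{p}$.

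For the case $2<p<\infty$ I would dualize: $\|S_{\alpha,L}f\|_{L^{p}(u)}^{2} = \sup_{\|g\|_{L^{(p/2)'}(u)}=1} \int (S_{\alpha,L}f)^{2}\,g\,u\,dx$. Inserting the bilinear sparse bound with $h=gu$ and applying the two Orlicz H\"older inequalities
\[
\langle f\rangle_{Q}\le \|v^{-1/p}\|_{B,Q}\,\|fv^{1/p}\|_{\bar{B},Q}, \qquad \langle gu\rangle_{Q}\le \|u^{2/p}\|_{A,Q}\,\|gu^{1-2/p}\|_{\bar{A},Q},
\]
I can write $\|v^{-1/p}\|_{B,Q}^{2}\|u^{2/p}\|_{A,Q}=\big(\|u^{2/p}\|_{A,Q}^{1/2}\|v^{-1/p}\|_{B,Q}\big)^{2}\le \|(u,v)\|_{A,B,p}^{2}$ and use sparseness to turn the remainder into $\int (M_{\bar{B}}(fv^{1/p}))^{2}\,M_{\bar{A}}(gu^{1-2/p})\,dx$. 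H\"older with exponents $p/2$ and $(p/2)'$, the maximal-function bounds coming from $\bar{B}\in B_{p}$ and $\bar{A}\in B_{(p/2)'}$, and the elementary identity $\|gu^{1-2/p}\|_{L^{(p/2)'}}=\|g\|_{L^{(p/2)'}(u)}$ together give a bound proportional to $[\bar{B}]_{B_{p}}^{2/p}[\bar{A}]_{B_{(p/2)'}}^{(p-2)/p}\|f\|_{L^{p}(v)}^{2}\|g\|_{L^{(p/2)'}(u)}$. Taking square roots, the exponents on the bump constants become $1/p$ and $(p-2)/(2p)=1/2-1/p$, matching $\mathscr{N}_{p}$ exactly.

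The main obstacle is the sparse domination step, since the rest is essentially routine once it is in hand. For an operator with only a Gaussian kernel bound, rather than full Calder\'on--Zygmund estimates, sparse control of the conical square function requires a Lerner-type formula adapted to the semigroup setting, exploiting off-diagonal $L^{2}$ bounds for the family $\{t^{m}Le^{-t^{m}L}\}_{t>0}$ derived from (A2). The only other point that needs care is tracking the sharp powers of $[\bar{A}]_{B_{(p/2)'}}$ and $[\bar{B}]_{B_{p}}$; these come out correctly because the quadratic nature of the sparse form forces two applications of the $B_{p}$ maximal bound (producing the exponent $2/p$, which becomes $1/p$ after taking square roots) and a single $B_{(p/2)'}$ maximal bound (producing $(p-2)/p$, which becomes $1/2-1/p$).
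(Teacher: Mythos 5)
Your proposal is correct and follows essentially the same route as the paper: the paper first proves the pointwise sparse domination $S_{\alpha,L}f \lesssim \alpha^n \sum_j \A_{\S_j}^2 f$ (Lemma \ref{lem:S-sparse}, via Hyt\"onen's local oscillation formula, the weak $(1,1)$ bound for $S_{1,L}$, and the oscillation estimate from \cite{BD} — exactly the Lerner-type argument you outline), and then runs the identical bump argument: for $1<p\le 2$ the $p/2$-concavity plus Orlicz--H\"older, sparseness and $\|M_{\bar B}\|_{L^p}\lesssim[\bar B]_{B_p}^{1/p}$; for $2<p<\infty$ duality in $L^{(p/2)'}(u)$, three applications of Orlicz--H\"older, and the bounds for $M_{\bar B}$ on $L^p$ and $M_{\bar A}$ on $L^{(p/2)'}$, with the same exponent bookkeeping. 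Your tracking of the constants $[\bar B]_{B_p}^{1/p}$ and $[\bar A]_{B_{(p/2)'}}^{1/2-1/p}$ matches the paper's proof exactly.
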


\begin{theorem}\label{thm:Sweak}
Let $1<p<\infty$, and let $S_{\alpha, L}$ be defined in \eqref{def:SaL} with $\alpha \ge 1$ and $L$ satisfying {\rm (A1)} and {\rm (A2)}. Let $A$ be a Young function. If the pair $(u, v)$ satisfies $[u, v]_{A,p'}<\infty$ with $\bar{A} \in B_{p'}$, then
\begin{align}\label{eq:S-weak}
\|S_{\alpha,L}(f)\|_{L^{p,\infty}(u)} \lesssim [u, v]_{A,p'} [\bar{A}]_{B_{p'}}^{\frac{1}{p'}} \|f\|_{L^p(v)}.
\end{align}
\end{theorem}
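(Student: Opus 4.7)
The plan is to adapt the classical Pérez two-weight weak-type framework to $S_{\alpha,L}$, using the Gaussian semigroup bound (A2) as a substitute for Calderón-Zygmund kernel smoothness. By homogeneity of \eqref{eq:S-weak} and the usual equivalent formulation of the Lorentz quasi-norm, it suffices to prove
\[
u\bigl(\{x\in\Rn:S_{\alpha,L}f(x)>1\}\bigr)\;\lesssim\;[u,v]_{A,p'}^{p}\,[\bar{A}]_{B_{p'}}^{p/p'}\,\|f\|_{L^{p}(v)}^{p}
\]
for a nonnegative compactly supported $f$.

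First I would perform a Calderón-Zygmund decomposition of $f$ relative to the weight $v$ at an appropriate level (comparable to $\|f\|_{L^{p}(v)}$ after the bump normalization), producing pairwise disjoint dyadic cubes $\{Q_{j}\}$ and a splitting $f=g+\sum_{j} b_{j}$ with $g$ bounded and $\supp b_{j}\subset Q_{j}$. For the good part, the $L^{2}$-boundedness of $S_{\alpha,L}$, which is a standard consequence of the bounded $H_{\infty}$-calculus in (A1), together with Chebyshev's inequality and an interpolation with the strong-type estimate of Theorem~\ref{thm:Suv} at some exponent $q\in(1,p)$, yield the required bound on $u(\{S_{\alpha,L} g>1/2\})$.

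For the bad part, set $Q_{j}^{*}$ to be a fixed dilate of $Q_j$ and $\Omega^{*}=\bigcup_{j}Q_{j}^{*}$. The estimate for $u(\Omega^{*})$ follows from the hypothesis $[u,v]_{A,p'}<\infty$ combined with $\bar{A}\in B_{p'}$ in the standard Pérez manner: Hölder's inequality in the Young-function scale converts the bump $\|v^{-1/p}\|_{A,Q}$ into a plain $L^{p'}$-type average over each $Q_j$, and the factor $[\bar A]_{B_{p'}}^{1/p'}$ enters at precisely this step. Outside $\Omega^{*}$, one estimates $\sum_{j} S_{\alpha,L}b_{j}$ pointwise through off-diagonal bounds for $t^{m}Le^{-t^{m}L}$ that are forced by (A2) via Cauchy's formula and analyticity: splitting the cone integral defining \eqref{def:SaL} at $t\simeq \ell(Q_j)$, the small-$t$ regime has its $y$-integration disjoint from $Q_j$ and enjoys the super-polynomial Gaussian decay in $|x-x_{Q_j}|/\ell(Q_j)$, while the large-$t$ regime is handled by the $L^{2}$ off-diagonal estimate for $t^{m}Le^{-t^{m}L}$ applied slice-wise. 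Summing the resulting geometric series over annuli $2^{k}Q_{j}^{*}\setminus 2^{k-1}Q_{j}^{*}$ and integrating against $u$ delivers the bound on the complement of $\Omega^*$.

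The main obstacle is the bad-part analysis. The Calderón-Zygmund atoms $b_{j}$ carry no cancellation, so all decay must come from the Gaussian factor in (A2); one has to coordinate the cone aperture $\alpha$ (which contributes the prefactor absorbed into the implicit constant) with the off-diagonal regime of $t^{m}Le^{-t^{m}L}$, and a careful book-keeping of constants through this coordination is what produces the sharp exponent $[\bar A]_{B_{p'}}^{1/p'}$ claimed in \eqref{eq:S-weak}. A leaner alternative, which I would attempt if the direct route becomes cumbersome, is to first extract a sparse domination of $S_{\alpha,L}$ of the type obtained in~\cite{BD}, and then invoke the known weak-type bump estimates for sparse bilinear forms.
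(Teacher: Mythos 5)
Your main route has a genuine gap in the bad-part estimate. Under (A2) the kernel of $Q_{t,L}=t^mLe^{-t^mL}$ satisfies only a size bound of the form $t^{-n}\big(1+|y-z|/t\big)^{-n-\delta_0}$, with no regularity in the $z$ variable, and you explicitly decline to use any cancellation of the $b_j$. But then the best pointwise bound available for $x\notin Q_j^*$, writing $d_j=|x-x_{Q_j}|$ and splitting the cone at $t\simeq d_j/\alpha$, is: in the small-$t$ regime $|Q_{t,L}b_j(y)|\lesssim t^{\delta_0}d_j^{-n-\delta_0}\|b_j\|_{L^1}$, which after integrating over the cone yields a contribution $\simeq \alpha^{n/2-\delta_0}\|b_j\|_{L^1}d_j^{-n}$, and in the large-$t$ regime $|Q_{t,L}b_j(y)|\lesssim t^{-n}\|b_j\|_{L^1}$, which yields $\simeq \alpha^{3n/2}\|b_j\|_{L^1}d_j^{-n}$. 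Thus the tail of $S_{\alpha,L}b_j$ decays exactly like $|x-x_{Q_j}|^{-n}$, which is not integrable on $(Q_j^*)^c$, and summing such borderline weak-$L^1$ pieces over $j$ is not legitimate; ``summing the geometric series over annuli'' therefore does not close. This is precisely why the weak $(1,1)$ theory for operators with only Gaussian size bounds (e.g.\ the bound for $S_{1,L}$ from \cite{ADM} that the paper quotes) replaces $b_j$ by $(I-e^{-\ell(Q_j)^mL})^M b_j$ in the Duong--McIntosh manner, letting the semigroup itself create the missing cancellation; some device of this kind is indispensable and is absent from your plan. The good part is also problematic: you propose to interpolate with the strong-type estimate of Theorem \ref{thm:Suv}, but that theorem assumes a double-bump condition (in particular $\bar B\in B_p$), strictly stronger than the separated bump $[u,v]_{A,p'}<\infty$ assumed in Theorem \ref{thm:Sweak}, and nothing in your sketch produces the stated constant $[u,v]_{A,p'}[\bar A]_{B_{p'}}^{1/p'}$ in \eqref{eq:S-weak}.

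For comparison, the paper's proof is essentially your ``leaner alternative,'' but it has to be carried out rather than quoted: Lemma \ref{lem:S-sparse} reduces everything to the sparse square form $\A_{\S}^2$; for this operator one proves the Fefferman--Stein endpoint $\|\A_{\S}^2f\|_{L^{1,\infty}(w)}\lesssim\|f\|_{L^1(M_{\D}w)}$ for an arbitrary weight $w$, via a dyadic Calder\'on--Zygmund decomposition in which the bad part vanishes identically off $\Omega$ (purely because $\A_{\S}^2$ only sees dyadic averages of the mean-zero $b_j$ — no kernel estimates are needed), while the good part is handled through an $L^2(w)$--$L^2(M_{\D}w)$ bound using a Young function with $\bar A\in B_2$; the separated-bump weak $(p,p)$ inequality, with the constant claimed in \eqref{eq:S-weak}, then follows from the endpoint extrapolation theorem of \cite[Corollary 8.4]{CMP11}. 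If you wish to keep a direct argument instead, you would need both the semigroup cancellation step for the bad part and a replacement for the extrapolation step that actually tracks $[u,v]_{A,p'}$ and $[\bar A]_{B_{p'}}^{1/p'}$.
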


We next present the Fefferman-Stein inequalities with arbitrary weights.
\begin{theorem}\label{thm:FS}
Let $1<p<\infty$, and let $S_{\alpha, L}$ be defined in \eqref{def:SaL} with $\alpha \ge 1$ and $L$ satisfying {\rm (A1)} and {\rm (A2)}. Then for every weight $w$,
\begin{align}
\label{eq:SMw-1} \|S_{\alpha,L}(f)\|_{L^p(w)} &\lesssim \alpha^n \|f\|_{L^p(Mw)}, \quad 1<p \le 2,
\\
\label{eq:SMw-2} \|S_{\alpha,L}(f)\|_{L^p(w)} &\lesssim \alpha^n \|f (Mw/w)^{\frac12}\|_{L^p(w)}, \quad 2<p<\infty,
\end{align}
where the implicit constants are independent of $w$ and $f$.
\end{theorem}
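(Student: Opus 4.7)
\smallskip
\noindent\emph{Proof plan for Theorem \ref{thm:FS}.}
The plan is to deduce both inequalities from the two-weight bump estimate in Theorem \ref{thm:Suv} by specializing the weight pair $(u,v)$ so that the bump constant $\|(u,v)\|_{A,B,p}$ becomes uniformly bounded in $w$. The elementary workhorse throughout is the pointwise lower bound $Mw(x)\ge \langle w\rangle_Q$ valid for every $x\in Q$, which allows one to replace factors of $(Mw)^{-s}|_Q$ inside Luxemburg norms by the constant $\langle w\rangle_Q^{-s}$.

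For the range $1<p\le 2$, I would take $(u,v)=(w,Mw)$, so that \eqref{eq:SMw-1} is precisely the claim $\|S_{\alpha,L}f\|_{L^p(u)}\lesssim \alpha^n\|f\|_{L^p(v)}$. Picking any Young function $B$ with $\bar B\in B_p$ (for instance, $\bar B(t)=t^p(\log(e+t))^{-1-\delta}$), the pointwise bound yields $\|v^{-1/p}\|_{B,Q}\le \|(Mw)^{-1/p}\|_{L^\infty(Q)}\le \langle w\rangle_Q^{-1/p}$; together with $\|u^{1/p}\|_{p,Q}=\langle w\rangle_Q^{1/p}$ this gives $\|(u,v)\|_{A,B,p}\le 1$ uniformly in $Q$. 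Applying Theorem \ref{thm:Suv} (whose $A$-bump is irrelevant when $p\le 2$) then delivers \eqref{eq:SMw-1} immediately, with a constant independent of $w$.

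For $2<p<\infty$, I would take $(u,v)=(w,(Mw)^{p/2}w^{1-p/2})$, so that $\|f\|_{L^p(v)}^p=\|f(Mw/w)^{1/2}\|_{L^p(w)}^p$ matches the target right-hand side. Choosing $A(t)\simeq t^{p/2}$ and $B(t)\simeq t^{2p/(p-2)}$, perturbed by small logarithmic factors to ensure $\bar A\in B_{(p/2)'}$ and $\bar B\in B_p$, the pointwise bound $(Mw)^{-1/2}|_Q\le \langle w\rangle_Q^{-1/2}$ combined with the direct computation of the power-average Luxemburg norms yields
\[
\|u^{2/p}\|_{A,Q}^{1/2}\|v^{-1/p}\|_{B,Q}\lesssim \langle w\rangle_Q^{1/p}\cdot \langle w\rangle_Q^{-1/2}\cdot \langle w\rangle_Q^{(p-2)/(2p)}=1,
\]
uniformly in $Q$ (the exponents summing to $\tfrac{1}{p}-\tfrac{1}{2}+\tfrac{p-2}{2p}=0$), and Theorem \ref{thm:Suv} furnishes \eqref{eq:SMw-2}.

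The principal technical obstacle is precisely the Young-function calibration in the case $p>2$: the ``natural'' power bumps $t^{p/2}$ and $t^{2p/(p-2)}$ that make the exponent arithmetic balance exactly lie on the boundary of $B_{(p/2)'}$ and $B_p$, respectively. I would therefore work with slightly larger Orlicz bumps of the form $t^{p/2}(\log(e+t))^{\sigma_A}$ and $t^{2p/(p-2)}(\log(e+t))^{\sigma_B}$ with $\sigma_A,\sigma_B$ just beyond the $B_q$ thresholds, and verify via Orlicz--H\"older that these perturbed Luxemburg norms are still dominated---uniformly in $w$---by the corresponding pure-power averages used in the displayed computation above. Once this book-keeping is in place, both Fefferman--Stein estimates \eqref{eq:SMw-1} and \eqref{eq:SMw-2} drop out as clean specializations of Theorem \ref{thm:Suv}, with the $\alpha^n$ factor inherited verbatim.
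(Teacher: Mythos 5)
Your treatment of the range $1<p\le 2$ is correct and is essentially the paper's own argument in disguise: specializing $(u,v)=(w,Mw)$ and using $Mw\ge\langle w\rangle_Q$ on $Q$ is exactly the computation the paper runs directly on the sparse form, so \eqref{eq:SMw-1} does follow with constant independent of $w$.

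The case $2<p<\infty$ has a genuine gap, and it is not a book-keeping issue. For Theorem \ref{thm:Suv} with $p>2$ you must have $\bar A\in B_{(p/2)'}$, which forces $A$ to be strictly larger than $t^{p/2}$ (if $A(t)\lesssim t^{p/2}$ then $\bar A(t)\gtrsim t^{(p/2)'}$ and the $B_{(p/2)'}$ integral diverges). But enlarging the Young function enlarges the Luxemburg norm, so the key step in your display, bounding $\|u^{2/p}\|_{A,Q}=\|w^{2/p}\|_{A,Q}$ by the pure power average $\langle w\rangle_Q^{2/p}$ uniformly in $w$, goes in the wrong direction and is false: for a log-bump $A(t)=t^{p/2}\log(e+t)^{p/2-1+\delta}$ one has $\|w^{2/p}\|_{A,Q}\simeq\|w\|_{L(\log L)^{p/2-1+\delta},Q}^{2/p}$, which cannot be controlled by $\langle w\rangle_Q^{2/p}$ (take $w=\mathbf{1}_E$ with $|E|=\epsilon|Q|$). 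Moreover the full bump constant for your pair $(u,v)=(w,(Mw)^{p/2}w^{1-p/2})$ is genuinely unbounded in $w$: with $n=1$, $w(x)=|x|^{-1+\eta}$ and $Q=(0,1)$ one gets $\|w^{2/p}\|_{A,Q}^{1/2}\simeq\eta^{-1/2-\delta/p}$ while $\|v^{-1/p}\|_{B,Q}\simeq\eta^{1/2}$, so $\|(u,v)\|_{A,B,p}\gtrsim\eta^{-\delta/p}\to\infty$ as $\eta\to0$. Hence \eqref{eq:SMw-2}, whose constant must be independent of $w$, cannot be extracted from Theorem \ref{thm:Suv} by this specialization; morally, the $A$-bump demands an $L(\log L)^{p/2-1+\delta}$-average of $w$, which $Mw$ (an $L^1$-type average) does not control. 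The paper avoids this entirely: it proves \eqref{eq:SMw-1} directly for $1<p\le 2$ and then obtains \eqref{eq:SMw-2} from the $p_0=2$ case by the arbitrary-weight extrapolation result of Cruz-Uribe--P\'erez (Lemma \ref{lem:extra}), which is precisely where the factor $(Mw/w)^{1/2}$ comes from. Replacing your $p>2$ argument by an appeal to Lemma \ref{lem:extra} with $p_0=2$ (using your own $p=2$ case as input) repairs the proof.
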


We turn to some weak type estimates for $S_{\alpha, L}$.
\begin{theorem}\label{thm:local}
Let $S_{\alpha, L}$ be defined in \eqref{def:SaL} with $\alpha \ge 1$ and $L$ satisfying {\rm (A1)} and {\rm (A2)}. Let $B \subset X$ be a ball and every function $f \in L^{\infty}_c(\Rn)$ with $\supp (f) \subset B$. Then there exist constants $c_1>0$ and $c_2>0$ such that
\begin{align}\label{eq:local}
| \big\{x \in B:  S_{\alpha,L}(f)(x) > t M(f)(x)  \big\}| \leq c_1 e^{- c_2 t^2}  |B|, \quad \forall t>0.
\end{align}
\end{theorem}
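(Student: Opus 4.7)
The plan is to combine the Fefferman-Stein inequality of Theorem~\ref{thm:FS} with a sharp $A_\infty$-extrapolation to produce a Coifman-Fefferman-type inequality for $S_{\alpha,L}$ whose $L^p$-constant grows like $\sqrt{p}$, and then to convert this $L^p$-inequality into the subgaussian local decay \eqref{eq:local} via a classical Chebyshev/good-$\la$ argument. The key heuristic is that the square-root growth in $p$ (as opposed to linear, which is what one gets for Calder\'on-Zygmund operators) is precisely what yields the $e^{-c_2 t^2}$ subgaussian tail rather than a merely exponential tail $e^{-c_2 t}$; this reflects the $L^2$-nature of the square function, which is built into the $L^2$-based estimate of Theorem~\ref{thm:FS}.

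Starting from the $p = 2$ case of Theorem~\ref{thm:FS}, namely $\|S_{\alpha,L}(f)\|_{L^2(w)}^2 \lesssim \int_{\Rn}|f|^2\, Mw\, dx$, I would apply a Rubio de Francia-type extrapolation within the $A_\infty$ class to derive the quantitative Coifman-Fefferman inequality
\[
\|S_{\alpha,L}(f)\|_{L^p(w)} \le C\, p^{1/2}\, [w]_{A_\infty}^{1/2}\, \|Mf\|_{L^p(w)}, \qquad 2 \le p < \infty,\ w \in A_\infty.
\]
The factor $p^{1/2}$ is obtained by carefully tracking constants in the extrapolation starting from the $L^2$ endpoint; this is where the quadratic character of the square function is encoded.

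I would then convert this into a good-$\la$ inequality with subgaussian decay: for every $\la > 0$ and every sufficiently small $\ve > 0$,
\[
\big|\{x\in\Rn : S_{\alpha,L}(f)(x) > 2\la,\ Mf(x) \le \ve \la\}\big| \le C_1\, e^{-c_2/\ve^2}\, \big|\{x\in\Rn : S_{\alpha,L}(f)(x) > \la\}\big|,
\]
obtained by applying the Coifman-Fefferman inequality with the weight $w = \chi_{\{S_{\alpha,L}(f) > \la\}}$ (whose $A_\infty$-constant is universally bounded), using Chebyshev on the left-hand side at exponent $p$, and optimizing $p \sim 1/\ve^2$. To conclude \eqref{eq:local}, I would use the support hypothesis $\supp(f)\subset B$, which guarantees $Mf(x) \gtrsim |B|^{-1}\int_B |f|\,dy > 0$ for every $x \in B$ and therefore a uniform positive lower bound for $Mf$ on $B$; combining this with the good-$\la$ inequality (with $\ve \sim 1/t$) transfers the subgaussian factor $e^{-c_2/\ve^2}$ directly into $c_1 e^{-c_2 t^2}|B|$.

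The main obstacle is the sharp constant tracking in the extrapolation step: extracting the optimal $\sqrt{p}$-rate (rather than linear in $p$) in the Coifman-Fefferman inequality requires a carefully quantified $A_\infty$-extrapolation adapted to the $L^2$-based Fefferman-Stein estimate of Theorem~\ref{thm:FS}. Once this sharp inequality is available, the passage to the good-$\la$ inequality and then to \eqref{eq:local} is relatively standard, in the spirit of Karagulyan and Ortiz-Caraballo-P\'erez-Rela for Calder\'on-Zygmund operators and their square-function analogues.
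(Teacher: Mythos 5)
Your heuristic is the right one (the subgaussian decay $e^{-c_2t^2}$ does come from the quadratic nature of the square function), and your overall architecture — a Coifman--Fefferman inequality with sharply tracked constants, converted into exponential decay by optimizing an exponent — is the same mechanism the paper uses. But both pivotal steps, as you state them, have genuine gaps. First, the passage from Theorem \ref{thm:FS} to the inequality $\|S_{\alpha,L}f\|_{L^p(w)}\le C p^{1/2}[w]_{A_\infty}^{1/2}\|Mf\|_{L^p(w)}$ for all $w\in A_\infty$ is not justified: Theorem \ref{thm:FS} has the \emph{arbitrary} weight $w$ on the left, $Mw$ on the right, and $|f|$ (not $Mf$) in the norm, whereas sharp $A_\infty$ extrapolation requires as input a same-weight estimate over $A_\infty$. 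From Theorem \ref{thm:FS} one can only pass to $A_1$ weights (using $Mw\le[w]_{A_1}w$), which is not enough for your scheme. The paper obtains the quantitative weighted estimate directly from the sparse domination of Lemma \ref{lem:S-sparse} together with the Carleson embedding theorem (Lemma \ref{lem:Car-emb}), yielding the local bound $\|S_{\alpha,L}f\|_{L^2(Q,w)}^2\lesssim \alpha^{2n}[w]_{A_p}\|Mf\|_{L^2(Q,w)}^2$ (Lemma \ref{lem:CF-local}); note that no growth in $p$ is ever needed, because the large parameter is later loaded into the weight characteristic rather than into the Lebesgue exponent.

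Second, your good-$\lambda$ step applies the Coifman--Fefferman inequality with $w=\chi_{\{S_{\alpha,L}f>\lambda\}}$, claiming its $A_\infty$ constant is universally bounded; this is false. The indicator of a general measurable (or even open) set is not an $A_\infty$ weight: for any cube $Q$ meeting the set and its complement in comparable proportions, the subset condition defining $A_\infty$ fails outright, so this application of the weighted inequality is illegitimate, and in addition the division by $Mf$ implicit in your Chebyshev step needs a separate device. The paper's substitute for both difficulties is the Rubio de Francia construction: bound $|\{x\in Q: S_{\alpha,L}f>tMf\}|^{1/r}$ by duality against $h\in L^{r'}(Q)$, and run Lemma \ref{lem:CF-local} with the weight $w=w_1w_2^{1-p}$, $w_1=\mathcal{R}h\in A_1$ with $[\mathcal{R}h]_{A_1}\lesssim r$ and $w_2=M(f)^{2(p'-1)}\in A_1$ by Coifman--Rochberg (taking $p>3$), so that $w=\mathcal{R}h\,M(f)^{-2}\in A_p$ with $[w]_{A_p}\lesssim r$; this simultaneously legalizes the factor $M(f)^{-2}$ and keeps the weight admissible. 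One then gets $|\{x\in Q: S_{\alpha,L}f>tMf\}|\le (c_n\alpha^{2n}r\,t^{-2})^r|Q|$ and choosing $r\simeq t^2$ produces \eqref{eq:local}; your final observation that $Mf$ is bounded below on $B$ is not needed in this argument. So to repair your proposal you would have to replace the extrapolation step by the sparse/Carleson argument and replace the indicator-weight good-$\lambda$ step by such a Rubio de Francia duality (or an equivalent localization), at which point you essentially recover the paper's proof.
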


\begin{theorem}\label{thm:mixed}
Let $S_{\alpha, L}$ be defined in \eqref{def:SaL} with $\alpha \ge 1$ and $L$ satisfying {\rm (A1)} and {\rm (A2)}. If $u$ and $v$ satisfy
\begin{align*}
(1) \quad u \in A_{1} \text{ and } uv \in A_{\infty},\ \
\text{ or }\quad  (2)\quad u  \in A_1 \text{ and } v \in A_{\infty},
\end{align*}
then we have
\begin{align}
\bigg\|\frac{S_{\alpha,L}(f)}{v}\bigg\|_{L^{1,\infty}(uv)} \lesssim   \| f \|_{L^1(u)},
\end{align}
In particular, $S_{\alpha,L}$ is bounded from $L^1(u)$ to $L^{1, \infty}(u)$ for every $u \in A_{1}$.
\end{theorem}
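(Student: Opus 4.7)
The plan is to follow the route signaled in the abstract: first derive a Coifman--Fefferman inequality for $S_{\alpha,L}$ from the local decay estimate of Theorem~\ref{thm:local}, and then combine it with the known mixed weak-type inequalities for the Hardy--Littlewood maximal operator $M$.

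\textbf{Step 1 (Coifman--Fefferman).} For every $w\in A_\infty$ and every $0<p<\infty$, one should have
\begin{equation*}
\|S_{\alpha,L}(f)\|_{L^{p}(w)}\le C_{w,p}\,\|M(f)\|_{L^{p}(w)}.
\end{equation*}
This is a standard consequence of Theorem~\ref{thm:local}: the self-improving property $w(E)/w(B)\le C(|E|/|B|)^{\delta_{w}}$ enjoyed by every $w\in A_\infty$ converts the Lebesgue-measure exponential decay in \eqref{eq:local} into an exponential decay in $w$-measure. A Whitney decomposition of the level set $\{M(f)>\lambda\}$ into balls on which $f$ can be localized, followed by integration in $t$, yields the inequality.

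\textbf{Step 2 (from $M$ to $S_{\alpha,L}$).} The mixed weak-type estimate for the maximal function,
\begin{equation*}
\|M(f)/v\|_{L^{1,\infty}(uv)}\lesssim\|f\|_{L^{1}(u)},
\end{equation*}
is known under each of hypotheses~(1) and~(2) (Cruz-Uribe--Martell--P\'erez and Li--Ombrosi--P\'erez, respectively). To transfer it to $S_{\alpha,L}$, consider hypothesis~(1) first: since $uv\in A_\infty$, applying the local decay \eqref{eq:local} with $uv$ in place of Lebesgue measure gives the good-$\lambda$ inequality
\begin{equation*}
(uv)\bigl\{S_{\alpha,L}(f)>2\lambda v,\ M(f)\le\gamma\lambda v\bigr\}\le Ce^{-c/\gamma^{2}}\,(uv)\bigl\{S_{\alpha,L}(f)>\lambda v\bigr\};
\end{equation*}
choosing $\gamma$ small and iterating reduces the mixed weak-type bound for $S_{\alpha,L}$ to the same bound for $M$. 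Under hypothesis~(2), the weight $uv$ need not lie in $A_\infty$, and one must instead use an extrapolation-style argument exploiting $u\in A_1$ and the reverse H\"older property of $v\in A_\infty$ independently, combined with the Coifman--Fefferman inequality of Step~1 applied to carefully chosen $A_\infty$ weights of the form $u\cdot v^{1-\epsilon}$. The ``in particular'' claim is then the case $v\equiv 1$ of~(2).

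\textbf{Main obstacle.} The hardest point is the reduction under hypothesis~(2): because $uv$ may fail to be an $A_\infty$ weight, the clean good-$\lambda$ argument of case~(1) is no longer available. One must instead combine a controlled Chebyshev inequality with Step~1, tracking the Muckenhoupt constants carefully so that the final estimate is of weak-type $L^{1,\infty}(uv)$ rather than strong-type; a preliminary truncation of $v$ is needed to guarantee that the intermediate quantities are finite, and the limit is then recovered by monotone convergence.
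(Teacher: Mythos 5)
Your Step 2 is where the argument genuinely breaks down. In case (1), the good-$\lambda$ inequality you write down, with the weight-dependent thresholds $2\lambda v$ and $\gamma\lambda v$, does not follow from Theorem~\ref{thm:local}: estimate \eqref{eq:local} is a Lebesgue-measure statement for $f$ supported in a fixed ball $B$, while the usual good-$\lambda$ machinery requires decomposing the level set $\{S_{\alpha,L}f>\lambda v\}$ into Whitney pieces on which the threshold is essentially constant and $f$ can be localized. Under hypothesis (1) the weight $v$ is only constrained through $uv\in A_\infty$, so $v$ itself can be wildly irregular; the set $\{S_{\alpha,L}f>\lambda v\}$ need not even be open, and $v$ is not comparable to a constant on any cube, so the pointwise implication $S_{\alpha,L}f>(2/\gamma)Mf$ on the good set cannot be converted into a relative-measure estimate on pieces of $\{S_{\alpha,L}f>\lambda v\}$. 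This is exactly the known obstruction in Sawyer-type mixed weak-type estimates, and it is why the results you cite for $M$ (Cruz-Uribe--Martell--P\'erez, Li--Ombrosi--P\'erez) do not proceed by good-$\lambda$. Case (2) is not proved at all: ``extrapolation-style argument with weights of the form $u\cdot v^{1-\epsilon}$'' is a plan, not an argument, and you flag it yourself as the main obstacle. Your Step 1 also has a smaller gap: deriving the Coifman--Fefferman inequality from \eqref{eq:local} alone requires handling the part of $f$ living far from each Whitney ball, which Theorem~\ref{thm:local} (stated only for $f$ supported in $B$) does not control.

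For comparison, the paper proves the Coifman--Fefferman bound $\|S_{\alpha,L}f\|_{L^p(w)}\lesssim\|Mf\|_{L^p(w)}$ for all $0<p<\infty$ and $w\in A_\infty$ directly from the sparse domination of Lemma~\ref{lem:S-sparse} together with $A_\infty$ extrapolation (Lemma~\ref{lem:CF}), and then replaces your good-$\lambda$ step by the duality scheme of Cruz-Uribe--Martell--P\'erez and Li--Ombrosi--Picardi: one builds $\mathcal{R}h=\sum_{j\ge 0}T_u^jh/(2K_0)^j$ with $T_uh=M(hu)/u$, so that $h\le\mathcal{R}h$, $\|\mathcal{R}h\|_{L^{r',1}(uv)}\lesssim\|h\|_{L^{r',1}(uv)}$ and $\mathcal{R}h\,uv^{1/r'}\in A_\infty$ for a suitable $r>1$; then $\|S_{\alpha,L}f/v\|_{L^{1,\infty}(uv)}^{1/r}$ is expressed by $L^{r,\infty}$--$L^{r',1}$ duality, Lemma~\ref{lem:CF} is applied with the exponent $1/r$ and the $A_\infty$ weight $\mathcal{R}h\,uv^{1/r'}$ to pass from $S_{\alpha,L}f$ to $Mf$, and one concludes with the known mixed weak-type estimate for $M$ under either (1) or (2). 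If you want to repair your proposal, this duality step is the missing ingredient; note that it uses the Coifman--Fefferman inequality precisely for an exponent $1/r<1$, so proving your Step 1 for all $0<p<\infty$ (as the paper does) is essential.
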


Given $1 \le p<\infty$, $A_p^{\mathcal{R}}$ denotes the class of weights $w$ such that
\begin{align*}
[w]_{A_p^{\mathcal{R}}} := \sup_{E \subset Q} \frac{|E|}{|Q|} \bigg(\frac{w(Q)}{w(E)}\bigg)^{\frac1p}<\infty,
\end{align*}
where the supremum is taken over all cubes $Q$ and all measurable sets $E \subset Q$. This $A_p^{\mathcal{R}}$ class was introduced in \cite{KT} to characterize the restricted weak-type $(p, p)$ of the Hardy-Littlewood maximal operator $M$ as follows:
\begin{align}\label{eq:ME}
\|M\mathbf{1}_E\|_{L^{p,\infty}(w)} \lesssim [w]_{A_p^{\mathcal{R}}} w(E)^{\frac1p}.
\end{align} 
We should mention that $A_p \subsetneq  A_p^{\mathcal{R}}$ for any $1<p<\infty$.

\begin{theorem}\label{thm:RW}
Let $S_{\alpha, L}$ be defined in \eqref{def:SaL} with $\alpha \ge 1$ and $L$ satisfying {\rm (A1)} and {\rm (A2)}. Then for every $2<p<\infty$, for every $w \in A_p^{\mathcal{R}}$, and for every measurable set $E \subset \Rn$, 
\begin{align}\label{eq:SLE}
\|S_{\alpha, L}(\mathbf{1}_E)\|_{L^{p,\infty}(w)} \lesssim [w]_{A_p^{\mathcal{R}}}^{1+\frac{p}{2}} w(E)^{\frac1p},
\end{align}
where the implicit constants are independent of $w$ and $E$.
\end{theorem}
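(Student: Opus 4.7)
The plan is to begin with the Fefferman--Stein inequality (\ref{eq:SMw-2}) of Theorem \ref{thm:FS}, apply it to $f = \mathbf{1}_E$, and combine with Chebyshev's inequality to obtain
\begin{equation*}
\|S_{\alpha, L}\mathbf{1}_E\|_{L^{p, \infty}(w)}^p \le \|S_{\alpha, L}\mathbf{1}_E\|_{L^p(w)}^p \lesssim \alpha^{np} \int_E (Mw)^{p/2} w^{1-p/2}\,dx.
\end{equation*}
The estimate (\ref{eq:SLE}) then reduces (after taking $p$-th roots and absorbing the $\alpha^n$ factor) to proving the weighted integral bound
\begin{equation*}
\int_E (Mw)^{p/2} w^{1-p/2}\,dx \lesssim [w]_{A_p^{\mathcal{R}}}^{p(1 + p/2)}\, w(E).
\end{equation*}

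To establish this bound, my approach is a layer-cake decomposition combined with the restricted weak-type of $M$ from (\ref{eq:ME}). Rewriting the left-hand side as $\int_E (Mw/w)^{p/2}\,dw$ and applying the layer-cake formula yields $\frac{p}{2}\int_0^\infty \lambda^{p/2-1}\, w(\{x\in E : Mw(x)/w(x) > \lambda\})\,d\lambda$. On each level set $\{Mw/w > \lambda\}$, a Calder\'on--Zygmund stopping-time argument applied to $w$ produces a disjoint cover by cubes $\{Q_j\}$ on which $\langle w\rangle_{Q_j}$ is comparable to a multiple of $\lambda$. The $A_p^{\mathcal{R}}$ condition $|E\cap Q_j|/|Q_j| \le [w]_{A_p^{\mathcal{R}}}(w(E\cap Q_j)/w(Q_j))^{1/p}$, together with (\ref{eq:ME}) applied to suitable characteristic functions, then permits one to extract the required decay in $\lambda$ and sum against $w(E)$. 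Kolmogorov's inequality at an auxiliary exponent $r > p/2$ serves to convert the weak-type information on $Mw/w$ into a strong-type bound after restriction to $E$.

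The principal obstacle lies in the quantitative analysis of this last step. Since $A_p^{\mathcal{R}}$ does not in general embed into $A_\infty$, the usual John--Nirenberg exponential decay for the distribution function of $Mw/w$ with respect to $w$ is unavailable, and a naive good-$\lambda$ argument fails. One must instead carefully invert the $A_p^{\mathcal{R}}$ inequality (which provides only a lower bound on $w(E')/w(Q)$ in terms of $|E'|/|Q|$) to produce the upper bound needed, keeping track of the correct power of $[w]_{A_p^{\mathcal{R}}}$. The resulting exponent $1+p/2$ decomposes naturally: a factor of $[w]_{A_p^{\mathcal{R}}}$ comes from the direct application of (\ref{eq:ME}), while an additional $[w]_{A_p^{\mathcal{R}}}^{p/2}$ emerges from the $p/2$-th power built into (\ref{eq:SMw-2}) combined with the $p$-th power inversion of the $A_p^{\mathcal{R}}$ condition.
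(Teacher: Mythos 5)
Your argument breaks down at its central step, and the problem is not merely that the step is left unproved. The inequality you reduce the theorem to, namely
\begin{equation*}
\int_E \Big(\frac{Mw}{w}\Big)^{p/2} w\,dx \;\lesssim\; [w]_{A_p^{\mathcal{R}}}^{p(1+p/2)}\, w(E),
\end{equation*}
is exactly the part you acknowledge as ``the principal obstacle,'' and it is false in general. Already for $w\in A_p\subset A_p^{\mathcal{R}}$ one can have $Mw\equiv\infty$ (take $w(x)=(1+|x|)^{a}$ with $0<a<n(p-1)$: averages over large balls centered at any point blow up), so the right-hand side of \eqref{eq:SMw-2} applied to $f=\mathbf{1}_E$ is infinite while the conclusion \eqref{eq:SLE} is finite and nontrivial; no power of $[w]_{A_p^{\mathcal{R}}}$ can repair this. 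Even setting aside this global phenomenon, the local integrability of $(Mw/w)^{p/2}$ against $w$ fails for weights with a strong zero near the $A_p^{\mathcal{R}}$ endpoint: for a weight comparable to $|x|^{n(p-1)}$ near the origin one has $Mw/w\gtrsim |x|^{-n(p-1)}$ there, and $\int_{|x|<1}|x|^{-n(p-1)(p/2-1)}dx=\infty$ as soon as $p\ge 3$. The Fujii--Wilson $A_\infty$ quantity only controls the \emph{first} power of the \emph{localized} $M(w\mathbf{1}_Q)/w$ against $w$, not the power $p/2>1$ of the global $Mw/w$. So the very first move --- majorizing the $L^{p,\infty}(w)$ norm by the $L^p(w)$ norm and invoking Theorem \ref{thm:FS} --- gives away the restricted weak-type structure that the statement is about, and the subsequent layer-cake/stopping-time sketch cannot recover it. (Your side remark that $A_p^{\mathcal{R}}$ does not embed into $A_\infty$ is also inaccurate: interpolating \eqref{eq:ME} with the trivial $L^\infty$ bound shows $M$ is bounded on $L^q(w)$ for every $q>p$, hence $w\in A_q$; but this is not the main issue.)

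The paper's proof never passes through $Mw$ or a strong-type bound. By Lemma \ref{lem:S-sparse} it suffices to estimate $\A_{\mathcal{S}}^2(\mathbf{1}_E)$; since $p/2>1$, one writes $\|\A_{\mathcal{S}}^2(\mathbf{1}_E)\|_{L^{p,\infty}(w)}^2=\|\A_{\mathcal{S}}^2(\mathbf{1}_E)^2\|_{L^{p/2,\infty}(w)}$ and dualizes against $0\le h\in L^{(p/2)',1}(w)$ with norm one. For each sparse cube the $A_p^{\mathcal{R}}$ condition yields $w(Q)\lesssim [w]_{A_p^{\mathcal{R}}}^p\,w(E_Q)$, so bounding $\langle\mathbf{1}_E\rangle_Q\le\inf_Q M\mathbf{1}_E$, controlling the $w$-average of $h$ over a dilate of $Q$ by $\inf_Q M_w^c h$ (the weighted centered maximal function), and using the disjointness of $\{E_Q\}$, the pairing is dominated by $[w]_{A_p^{\mathcal{R}}}^p\int_{\Rn}(M\mathbf{1}_E)^2\,M_w^c h\,w\,dx$. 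H\"older's inequality in Lorentz spaces, the restricted weak-type bound \eqref{eq:ME} for $M$, and the boundedness of $M_w^c$ on $L^{(p/2)',1}(w)$ then produce the exponent $1+\frac{p}{2}$. If you wish to salvage your route, you would need a weak-type, characteristic-function-adapted substitute for \eqref{eq:SMw-2}, which is in effect what this duality argument supplies.
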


Finally, we obtain the endpoint estimate for commutators of $S_{\alpha, L}$ as follows. Given an operator $T$ and measurable functions $b$, we define, whenever it makes sense, the commutator by
\begin{align*}
C_{b}(T)(f)(x) := T((b(x)-b(\cdot))f(\cdot))(x).
\end{align*}

\begin{theorem}\label{thm:SbA1}
Let $S_{\alpha, L}$ be defined in \eqref{def:SaL} with $\alpha \ge 1$ and $L$ satisfying {\rm (A1)} and {\rm (A2)}. Then for every $w \in A_1$,
\begin{equation}
w(\{x\in \Rn: C_b(S_{\alpha,L})f(x)>t\}) \lesssim \int_{\Rn} \Phi \Big(\frac{|f(x)|}{t}\Big) w(x) dx, \quad\forall t>0,
\end{equation}
where $\Phi(t)=t(1+\log^{+}t)$.
\end{theorem}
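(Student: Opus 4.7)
The theorem is an endpoint $L\log L$-type inequality for the commutator of a square function with an $A_1$ weight, in the spirit of P\'erez's classical endpoint estimates for commutators of Calder\'on--Zygmund operators. Since $S_{\alpha,L}$ is quadratic, we first linearize: view the family $\{t^mLe^{-t^mL}\}_{t>0}$ as a single linear vector-valued operator $\mathcal{T}$ with values in $\mathcal{H}=L^2(\R^{n+1}_+,\,dy\,dt/t^{n+1})$, so that $S_{\alpha,L}f(x)=\|\mathbf{1}_{\Gamma_\alpha(x)}\mathcal{T}f\|_\mathcal{H}$ and $C_b(S_{\alpha,L})f(x)=\|\mathbf{1}_{\Gamma_\alpha(x)}(b(x)-b(\cdot))\mathcal{T}f(\cdot)\|_\mathcal{H}$. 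This reduces the problem to the well-developed linear commutator machinery.

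\textbf{Key steps.} Step (i): deduce from the local decay estimate (Theorem \ref{thm:local}) and a standard good-$\lambda$ argument the Coifman--Fefferman inequality
\begin{equation*}
\|S_{\alpha,L}f\|_{L^p(w)}\lesssim\|Mf\|_{L^p(w)},\qquad 0<p<\infty,\ w\in A_\infty.
\end{equation*}
Step (ii): upgrade to a commutator version via the P\'erez conjugation trick on $\mathcal{T}$, namely write $e^{zb}\mathcal{T}(e^{-zb}f)$, extract the first Taylor coefficient through the Cauchy integral, and use the sharp weighted $L^p$ bounds from Theorem \ref{thm:Suv} with power-type Young functions to furnish the required radius of convergence, obtaining
\begin{equation*}
\|C_b(S_{\alpha,L})f\|_{L^p(w)}\lesssim\|b\|_{\BMO}\,\|M_{L\log L}f\|_{L^p(w)},\qquad 1<p<\infty,\ w\in A_\infty.
\end{equation*}
Step (iii): prove the endpoint directly by Calder\'on--Zygmund decomposition. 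Fix $w\in A_1$ and normalize $t=1$, $\|b\|_{\BMO}=1$; decompose $f=g+h$ with $h=\sum_j h_j$, $\supp h_j\subset Q_j$, $\int h_j=0$, and set $\Omega=\bigcup 2Q_j$. Then $w(\Omega)$ is controlled by $\int\Phi(|f|)w$ via $w\in A_1$; the good part $g$ by Chebyshev combined with the $L^2(w)$ case of step (ii) and $\|g\|_\infty\lesssim 1$; the bad part by decomposing $b(x)-b(y)=(b(x)-b_{Q_j})-(b(y)-b_{Q_j})$ and applying, on one piece, the mean-zero cancellation $\int h_j=0$ together with the off-diagonal kernel bounds from (A2), and on the other piece, the weak-type $L^1(w)\to L^{1,\infty}(w)$ bound from Theorem \ref{thm:mixed} together with the John--Nirenberg inequality to absorb the logarithmic growth of $|b-b_{Q_j}|$ into $\Phi$.

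\textbf{Main obstacle.} The delicate point is step (ii), the commutator Coifman--Fefferman inequality. Because $S_{\alpha,L}$ is nonlinear, the conjugation trick must be carried out at the vector-valued linear level on $\mathcal{T}$, and it requires quantitative weighted $L^p$ bounds for $\mathcal{T}$ with tractable dependence on $[w]_{A_p}$ (not just qualitative $A_p$-boundedness); Theorem \ref{thm:Suv} with power bumps supplies exactly this. A secondary technical issue in step (iii) is the off-diagonal integration along $\Gamma_\alpha(x)$ for $x$ outside $2Q_j$, which relies on the gradient-type bound $|t^m L e^{-t^m L}(x,y)|\lesssim t^{-n/m}\exp(-c(|x-y|/t^{1/m})^{m/(m-1)})$, a consequence of (A2) via Cauchy's formula in $t$ and analyticity of the semigroup.
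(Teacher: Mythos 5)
There is a genuine gap, and it sits exactly where the real work of this theorem lies. First, your Step (ii) does not follow from the conjugation trick. Writing $e^{zb}\mathcal{T}(e^{-zb}f)$ and using the Cauchy integral gives, at best, bounds of the form $\|C_b(S_{\alpha,L})f\|_{L^p(w)}\lesssim \|f\|_{L^p(w)}$ for $w\in A_p$ (with quantitative control of $[we^{p\operatorname{Re}z\,b}]_{A_p}$ via John--Nirenberg); it cannot produce a Coifman--Fefferman inequality with $M_{L\log L}f$ on the right for arbitrary $w\in A_\infty$, since the maximal function never enters the conjugation argument and $A_\infty$ weights are not stable under multiplication by $e^{p\operatorname{Re}z\,b}$. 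That inequality is precisely the hard step, and the paper obtains it through a pointwise sharp maximal function estimate, $M_\delta^{\#}(C_b(\widetilde S_{\alpha,L})f)\lesssim\|b\|_{\BMO}\big(M_{L\log L}f+M_\varepsilon(\widetilde S_{\alpha,L}f)\big)$, proved using the oscillation estimates for the kernel of $t^mLe^{-t^mL}$ from \cite{BD}; this is then combined with a weak-type Fefferman--Stein inequality for doubling $\varphi$ and the known $A_1$ endpoint bound for $M_{L\log L}$, so no Calder\'on--Zygmund decomposition of $f$ is ever performed.

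Second, your Step (iii) fails as described at the bad part. Under (A2) you only have Gaussian \emph{size} bounds for the kernel $q_t(y,z)$ of $t^mLe^{-t^mL}$; for the $\mathcal H$-valued kernel $K(x,z)(y,t)=\mathbf{1}_{|x-y|<\alpha t}q_t(y,z)$ these give merely $\|K(x,z)\|_{\mathcal H}\lesssim |x-z|^{-n}$, which is not integrable away from $Q_j$. Exploiting $\int h_j=0$ requires a difference estimate $\|K(x,z)-K(x,z_j)\|_{\mathcal H}$, i.e. spatial regularity of $q_t$ in the source variable, which (A2) does not provide; the standard remedy in this generality is the Duong--McIntosh decomposition $h_j=(I-e^{-\ell(Q_j)^mL})h_j+e^{-\ell(Q_j)^mL}h_j$, which your argument never invokes (and the logarithmic growth of $|b(x)-b_{Q_j}|$ then demands extra decay on top). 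In addition, controlling $\sum_j S_{\alpha,L}((b-b_{Q_j})h_j)$ by applying the weak-type bound of Theorem \ref{thm:mixed} term by term and summing is not legitimate, since $L^{1,\infty}$ quasi-norms are not countably subadditive; one must apply the weak-type bound to the single function $\sum_j(b-b_{Q_j})h_j$ and then use $\exp L$--$L\log L$ duality. So both the commutator Coifman--Fefferman input and the endpoint decomposition need to be reworked; the paper's route through $M_\delta^{\#}$ avoids these obstacles entirely.
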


\section{Applications}\label{sec:app}
The goal of this section is to give some applications of Theorems \ref{thm:Suv}--\ref{thm:SbA1}. To this end, we introduce some new operators. Associated with $L$ introduced in Section \ref{Introduction}, we can also define the square functions $g_{L}$ and $g^*_{\lambda,L}$ ($\lambda>0$) as follows:
\begin{align*}
g_{L}(f)(x) &:=\bigg(\int_0^\infty |t^m Le^{-t^m L}f(x)|^2 \frac{dt}{t}\bigg)^{\frac12},
\\
g^*_{\lambda,L}(f)(x) &:=\bigg(\int_0^\infty\int_{\Rn}\bigg( \frac{t}{t+|x-y|}\bigg)^{n\lambda} |t^mLe^{-t^mL}f(y)|^2\frac{dydt}{t^{n+1}}\bigg)^{\frac12}.
\end{align*}
If $L$  satisfies (A1) and (A2), we have the following estimates (cf. \cite[p. 891]{BD}):
\begin{align}
\label{eq:gL-1} g_{L}(f)(x) &\lesssim  g^*_{\lambda, L}(f)(x), \quad x \in \Rn,
\\
\label{eq:gL-2} g^*_{\lambda,L}(f)(x) &\lesssim \sum_{k=0}^\infty 2^{-k\lambda n/ 2}S_{2^k, L}f(x),\quad x \in \Rn,
\end{align}
whenever $\lambda>2$. By \eqref{eq:gL-1}, \eqref{eq:gL-2} and Theorems \ref{thm:Suv}--\ref{thm:SbA1}, we conclude the following:

\begin{theorem}\label{thm:app-1}
Let $L$ satisfy {\rm (A1)} and {\rm (A2)}. Then Theorems \ref{thm:Suv}--\ref{thm:SbA1} are also true for $g_L$ and $g^*_{\lambda,L}$, whenever $\lambda>2$.
\end{theorem}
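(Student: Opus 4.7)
The plan is to derive every assertion for $g^*_{\lambda,L}$ from the corresponding assertion for $S_{2^k,L}$ using the pointwise bound \eqref{eq:gL-2} and then summing the resulting geometric series; the statement for $g_L$ then follows immediately from \eqref{eq:gL-1}. The crucial feature is that the constants in Theorems \ref{thm:Suv}--\ref{thm:SbA1} depend at most polynomially on $\alpha$ (and explicitly as $\alpha^n$ in Theorems \ref{thm:Suv} and \ref{thm:FS}); combined with the factor $2^{-k\lambda n/2}$ and the hypothesis $\lambda>2$, this produces summable series.

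For the strong-type results (Theorems \ref{thm:Suv} and \ref{thm:FS}), Minkowski's inequality in $L^p(u)$ applied to \eqref{eq:gL-2} gives
\begin{align*}
\|g^*_{\lambda,L}(f)\|_{L^p(u)} \le \sum_{k\ge 0} 2^{-k\lambda n/2}\,\|S_{2^k,L}(f)\|_{L^p(u)} \lesssim \bigg(\sum_{k\ge 0} 2^{-k(\lambda-2)n/2}\bigg)\mathscr{N}_p \|f\|_{L^p(v)},
\end{align*}
and the series converges precisely because $\lambda>2$. The Fefferman--Stein variant is identical.

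For the weak-type estimates (Theorems \ref{thm:Sweak}, \ref{thm:mixed}, \ref{thm:RW}, \ref{thm:SbA1}), I would split the level $t$ as $t=\sum_k t_k$ with $t_k:=(1-2^{-\varepsilon})2^{-k\varepsilon}t$ for a small $\varepsilon>0$, so that by \eqref{eq:gL-2}
\begin{align*}
\big\{x: g^*_{\lambda,L}(f)(x)>t\big\} \subset \bigcup_{k\ge 0}\big\{x: S_{2^k,L}(f)(x) > 2^{k\lambda n/2}\, t_k\big\}.
\end{align*}
Applying the relevant $S_{\alpha,L}$ bound with $\alpha=2^k$ on each level set produces a series of the form $\sum_k 2^{-k[q(\lambda-2)n/2 - q\varepsilon]}$ (with $q$ the right-hand-side exponent); choosing $\varepsilon$ small enough keeps the exponent positive and completes the geometric sum. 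For Theorem \ref{thm:local}, the same splitting applied to \eqref{eq:local} yields $\sum_k c_1 \exp(-c_2 (2^{k\lambda n/2} t_k)^2)|B|$, whose $k=0$ term dominates and produces a bound of the form $c_1' e^{-c_2' t^2}|B|$.

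For the commutator endpoint (Theorem \ref{thm:SbA1}), sublinearity of $S_{\alpha,L}$ combined with \eqref{eq:gL-2} gives the pointwise control $C_b(g^*_{\lambda,L})(f) \le \sum_k 2^{-k\lambda n/2}\,C_b(S_{2^k,L})(f)$, and the splitting argument is combined with the submultiplicativity $\Phi(ab)\le \Phi(a)\Phi(b)$ (valid for $\Phi(t)=t(1+\log^+ t)$ on $a,b\ge 1$) to detach the $k$-dependent factor from $|f|/t$. The main obstacle throughout is simple bookkeeping: for each of the seven theorems one must verify that its constant depends polynomially on $\alpha$ and identify the precise exponent, so that the choice of $\varepsilon$ and the summation work out. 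Once this polynomial dependence is confirmed, the assumption $\lambda>2$ provides exactly the margin needed for every geometric series to converge.
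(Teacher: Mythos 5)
Your proposal is correct and follows essentially the same route as the paper, which simply deduces the result from the pointwise dominations \eqref{eq:gL-1} and \eqref{eq:gL-2} together with Theorems \ref{thm:Suv}--\ref{thm:SbA1}, the point being exactly the one you isolate: the constants for $S_{2^k,L}$ grow polynomially (essentially like $2^{kn}$) while \eqref{eq:gL-2} supplies the factor $2^{-k\lambda n/2}$, so $\lambda>2$ makes all the series converge. Your write-up is in fact more detailed than the paper's (which gives no argument beyond the citation), spelling out the Minkowski step for the strong-type bounds and the level-splitting $t=\sum_k t_k$ for the weak-type, local-decay and commutator endpoint statements.
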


Next, we introduce a class of square functions associated to $L$ and $D$, where $D$ is an operator which plays the role of the directional derivative or gradient operator. Assume that $m$ is a positive even integer. Let $D$ be a densely defined linear operator on $L^2(\mathbb{R}^n)$ which possess the following properties:
\begin{enumerate}
\item[(D1)] $D^{m/ 2}L^{-1/ 2}$ is bounded on $L^2(\mathbb{R}^n)$;
\vspace{0.2cm}
\item[(D2)]there exist $c_1,  c_2 > 0$ such that
$$|D^{m/ 2}p_t(x,y)|\leq \frac{c_1}{\sqrt{t}|B(x,t^{1/ m})|}\exp\bigg(-\frac{|x-y|^{m/(m-1)}}{c_2 \, t^{1/(m-1)}}\bigg).$$
\end{enumerate}
Given $\alpha \ge 1$ and $\lambda>2$, we define the following square functions associated to $L$ and $D$:
\begin{align*}
g_{D, L}(f)(x) &:=\bigg(\int_0^\infty |t^{\frac{m}{2}} D^{\frac{m}{2}}e^{-t^m L}f(x)|^2 \frac{dt}{t}\bigg)^{\frac12},
\\
S_{\alpha,D, L}(f)(x) &:=\bigg(\iint_{\Gamma_{\alpha}(x)} |t^{\frac{m}{2}} D^{\frac{m}{2}}e^{-t^mL}f(y)|^2\frac{dydt}{t^{n+1}}\bigg)^{\frac12},
\\
g^*_{\lambda,D,L}(f)(x) &:=\bigg(\int_0^\infty\int_{\Rn}\bigg( \frac{t}{t+|x-y|}\bigg)^{n\lambda} |t^{\frac{m}{2}} D^{\frac{m}{2}}e^{-t^mL}f(y)|^2\frac{dydt}{t^{n+1}}\bigg)^{\frac12}.
\end{align*}
It was proved in \cite[p.~895]{BD} that
\begin{align}\label{eq:gDL-1}
g_{D, L}(f)(x) \lesssim g^*_{\lambda, L}(f)(x), \quad x \in \Rn \text{ and } \lambda>2.
\end{align}
On the other hand, we note that $S_{\alpha,D, L}$ has the same properties as $S_{\alpha,L}$ and
\begin{align}\label{eq:gDL-2}
g^*_{\lambda,D,L}(f)(x) &\lesssim \sum_{k=0}^\infty 2^{-k\lambda n/ 2}S_{2^k, D, L}(f)(x),\quad x \in \Rn \text{ and }  \lambda>2.
\end{align}
Then, \eqref{eq:gDL-1}, \eqref{eq:gDL-2} and Theorems \ref{thm:Suv}--\ref{thm:SbA1} give the following.

\begin{theorem}\label{thm:app-2}
Let $L$ satisfy {\rm (A1)} and {\rm (A2)} and $D$ satisfy {\rm (D1)} and {\rm (D2)}. Let $\alpha\geq 1$ and $\lambda>2$. Then Theorems \ref{thm:Suv}--\ref{thm:SbA1} also hold for $g_{D,L}$, $S_{\alpha,D, L}$ and $g^*_{\lambda,D,L}$.
\end{theorem}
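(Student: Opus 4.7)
The plan is to reduce everything to the already-established theorems for $S_{\alpha,L}$ by handling the three operators separately. For $S_{\alpha,D,L}$, I would argue that conditions (D1) and (D2) play exactly the same structural role for the family $\{t^{m/2}D^{m/2}e^{-t^mL}\}_{t>0}$ that (A1) and (A2) play for $\{t^mLe^{-t^mL}\}_{t>0}$: (D1) combined with the $H_\infty$-calculus in (A1) delivers the $L^2$ quadratic estimate needed to see $S_{\alpha,D,L}$ as a square function of Littlewood--Paley type, while (D2) furnishes the Gaussian off-diagonal bound for the kernel of $t^{m/2}D^{m/2}e^{-t^mL}$, with the same $x\mapsto y$ decay profile as in (A2). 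Since the proofs of Theorems~\ref{thm:Suv}--\ref{thm:SbA1} only use these two ingredients (pointwise Gaussian kernel bound plus $L^2$-boundedness, together with the standard sparse/dyadic machinery), the arguments transfer verbatim, with the same dependence on $\alpha$, to $S_{\alpha,D,L}$.

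Given this, I would handle $g^*_{\lambda,D,L}$ by invoking the pointwise subordination \eqref{eq:gDL-2}, which writes
\[
g^*_{\lambda,D,L}(f)(x)\lesssim \sum_{k=0}^{\infty} 2^{-k\lambda n/2}\, S_{2^k,D,L}(f)(x).
\]
For each of the conclusions in Theorems~\ref{thm:Suv}--\ref{thm:SbA1} with strong $L^p$ left-hand side (bump, Fefferman--Stein, restricted weak type and its absorption into the sparse proof), applying Minkowski/triangle inequality and the corresponding theorem for $S_{2^k,D,L}$ produces a factor of $(2^k)^n$ per term, so the series converges to a finite constant precisely when $\lambda>2$. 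For the weak-type conclusions (Theorems~\ref{thm:Sweak}, \ref{thm:local}, \ref{thm:mixed}, \ref{thm:RW}, \ref{thm:SbA1}), I would instead exploit that $L^{p,\infty}$ is a quasi-Banach space: split $\{g^*_{\lambda,D,L}(f)>t\}\subset \bigcup_k \{2^{-k\lambda n/2}S_{2^k,D,L}(f)>c_k t\}$ with weights $c_k$ chosen so that $\sum c_k=1$ and $\sum c_k^{-1}2^{-k\lambda n/2}2^{kn}<\infty$, which is again available for $\lambda>2$. For the local decay estimate and the $L\log L$ endpoint for commutators, a small additional care is needed because the right-hand side is not linear in the constant; I would truncate the series at a scale depending on $t$ and handle the tail by the trivial $L^2$ or BMO bound. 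Finally, for $g_{D,L}$, I would simply combine \eqref{eq:gDL-1} with Theorem~\ref{thm:app-1} applied to $g^*_{\lambda,L}$, avoiding any new work.

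The main obstacle I anticipate is in bookkeeping the $\alpha$-dependence: Theorems~\ref{thm:Suv} and \ref{thm:FS} make the $\alpha^n$ explicit, but Theorems~\ref{thm:Sweak}--\ref{thm:SbA1} absorb the $\alpha$-dependence into the implicit constants. To justify the summation in \eqref{eq:gDL-2} honestly one has to trace through each of those proofs to check that the constant grows at most polynomially in $\alpha$, say like $\alpha^{Kn}$ for some fixed $K$ not depending on the weight parameters, and then verify that $\lambda>2$ remains sufficient (if not, one notes that a larger threshold $\lambda>\lambda_0$ would be recorded; here the geometry of the cones makes $K=1$ work in every case, matching the statement in Theorem~\ref{thm:app-1}). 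Once that tracking is done, all the desired estimates for $g_{D,L}$, $S_{\alpha,D,L}$ and $g^*_{\lambda,D,L}$ follow by the scheme above with no genuinely new analysis.
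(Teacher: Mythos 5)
Your proposal follows essentially the same route as the paper, which proves Theorem \ref{thm:app-2} in one line: $S_{\alpha,D,L}$ inherits the sparse/dyadic arguments because (D1)--(D2) play the roles of (A1)--(A2), $g^*_{\lambda,D,L}$ is handled by summing \eqref{eq:gDL-2} against the $\alpha^n$-growth (convergent for $\lambda>2$), and $g_{D,L}$ is reduced via \eqref{eq:gDL-1} to $g^*_{\lambda,L}$ and Theorem \ref{thm:app-1}. Your additional care about splitting the weak-type and endpoint estimates when summing the series is a legitimate refinement of a point the paper leaves implicit, but it does not change the argument's structure.
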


Finally, we defined a class of more general square functions. Assume that $L$ is a nonnegative self-adjoint operator in $L^2(\mathbb{R}^n)$ and satisfies (A2). Denote by $E_L (\lambda)$ the spectral decomposition of $L$. Then by spectral theory, for any bounded Borel function $F : [0,\infty)\rightarrow C$ we can define
\[
F(L)=\int_0^\infty F(\lambda)dE_L(\lambda)
\]
as a bounded operator on $L^2(\mathbb{R}^n)$.

Let $\psi$ be an even real-valued function in the Schwartz space $\mathcal{S}(\R)$ such that $\int_0^\infty \psi^2(s)\frac{ds}{s}<\infty$. Given $\alpha \ge 1$ and $\lambda>2$, we now consider the following square functions:
\begin{align*}
g_{\psi, L}(f)(x) &:=\bigg(\int_0^\infty |\psi(t^\frac{m}{2}\sqrt{L})f(x)|^2 \frac{dt}{t}\bigg)^{\frac12},
\\
S_{\alpha,\psi, L}(f)(x) &:=\bigg(\iint_{\Gamma_{\alpha}(x)} |\psi(t^\frac{m}{2}\sqrt{L})f(y)|^2\frac{dydt}{t^{n+1}}\bigg)^{\frac12},
\\
g^*_{\lambda,\psi,L}(f)(x) &:=\bigg(\int_0^\infty\int_{\Rn} \bigg( \frac{t}{t+|x-y|}\bigg)^{n\lambda} |\psi(t^\frac{m}{2}\sqrt{L})f(y)|^2\frac{dydt}{t^{n+1}}\bigg)^{\frac12}.
\end{align*}
Observe that for any $N>0$,
\begin{align}\label{eq:psiL}
|\psi(t^{m/2} \sqrt{t})(x, y)| \le C_N \frac{1}{t^n} \bigg(1+\frac{|x-y|}{t}\bigg)^{-N},\quad t>0, \, x, y \in \Rn.
\end{align}
Using \eqref{eq:psiL} and the argument for $S_{\alpha,L}$, we obtain that the estimates in Section \ref{Introduction} is true for $S_{\alpha,D, L}$. Additionally, for any $\lambda>2$,
\begin{align}
\label{eq:gpsiL-1} g_{\psi, L}f(x) &\lesssim g^*_{\lambda, \varphi, L}(f)(x) + g^*_{\lambda,\psi, L}f(x), \quad x \in \Rn,
\\
\label{eq:gpsiL-2} g^*_{\lambda,\psi,L}(f)(x) &\lesssim \sum_{k=0}^\infty 2^{-k\lambda n/ 2}S_{2^k,\psi, L}(f)(x),\quad x \in \Rn,
\end{align}
where $\varphi \in \S(\R)$ is a fixed function supported in $[2^{-m/2}, 2^{m/2}]$. The proof of \eqref{eq:gpsiL-1} is given in \cite{BD}, while the proof of \eqref{eq:gpsiL-2} is as before. Together with Theorems \ref{thm:Suv}--\ref{thm:SbA1}, these estimates imply the conclusions as follows.

\begin{theorem}\label{thm:app-3}
Let $L$ be a nonnegative self-adjoint operator in $L^2(\mathbb{R}^n)$ and satisfy {\rm (A2)}. Let $\alpha\geq 1$ and $\lambda>2$. Then Theorems \ref{thm:Suv}--\ref{thm:SbA1} are true for $g_{\psi, L}$, $S_{\alpha,\psi, L}$ and $g^*_{\lambda,\psi,L}$.
\end{theorem}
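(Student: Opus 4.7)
The plan is to establish all conclusions for $S_{\alpha,\psi,L}$ first, and then deduce them for $g_{\psi,L}$ and $g^*_{\lambda,\psi,L}$ via the pointwise comparisons \eqref{eq:gpsiL-1} and \eqref{eq:gpsiL-2}.

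\textbf{Step 1 (the conical piece).} I will verify that the only properties of the family $\{t^m L e^{-t^m L}\}$ used in the proofs of Theorems \ref{thm:Suv}--\ref{thm:SbA1} are a pointwise kernel decay of Gaussian (or polynomial) type at scale $t$, together with the off-diagonal $L^2$ bounds that such decay entails. The estimate \eqref{eq:psiL} gives precisely such a decay for $\psi(t^{m/2}\sqrt{L})$, with polynomial order $N$ that may be chosen as large as needed. Replaying the arguments of Section \ref{Introduction} line by line, every occurrence of the Gaussian tail of $p_t(x,y)$ is replaced by the right-hand side of \eqref{eq:psiL} with $N$ large enough to absorb the geometric factors produced by the Calderón--Zygmund/sparse decomposition and by the truncation to $\Gamma_\alpha(x)$. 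This gives Theorems \ref{thm:Suv}--\ref{thm:SbA1} for $S_{\alpha,\psi,L}$ with the same $\alpha^n$-type dependence (in the strong bounds) and analogous polynomial-in-$\alpha$ constants (in the weak-type and endpoint bounds).

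\textbf{Step 2 (the $g^*_\lambda$ piece).} Apply \eqref{eq:gpsiL-2}: $g^*_{\lambda,\psi,L}(f) \lesssim \sum_{k\ge 0} 2^{-k\lambda n/2}\, S_{2^k,\psi,L}(f)$. Each conclusion established for $S_{2^k,\psi,L}$ in Step 1 comes with a constant that is polynomial in $2^{kn}$; since $\lambda>2$, the series
\begin{equation*}
\sum_{k\ge 0} 2^{-k\lambda n/2}\cdot 2^{kn} \;=\; \sum_{k\ge 0} 2^{kn(1-\lambda/2)}
\end{equation*}
converges. For the strong-type estimates (Theorems \ref{thm:Suv}, \ref{thm:FS}) one sums by the triangle inequality in $L^p(w)$. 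For the weak-type estimates (Theorems \ref{thm:Sweak}, \ref{thm:mixed}, \ref{thm:RW}, \ref{thm:SbA1}) one uses the standard sublinearity of the $L^{p,\infty}$ quasinorm together with the geometric decay, distributing the level $t$ as $t=\sum_k t_k$ with $t_k$ proportional to $2^{-\varepsilon k}$ for a small $\varepsilon>0$. The local decay estimate (Theorem \ref{thm:local}) is handled analogously, splitting the level $t$ over the dyadic scales and using the Gaussian tail $e^{-c t^2}$ of each summand.

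\textbf{Step 3 (the vertical piece).} Apply \eqref{eq:gpsiL-1}: $g_{\psi,L}(f) \lesssim g^*_{\lambda,\varphi,L}(f) + g^*_{\lambda,\psi,L}(f)$. Since $\varphi\in\mathcal{S}(\R)$ is supported in $[2^{-m/2},2^{m/2}]$, it falls under the same framework as $\psi$ and its $g^*_\lambda$-version is controlled by Step 2 applied with $\varphi$ in place of $\psi$. The second term is directly given by Step 2. Combining, all of Theorems \ref{thm:Suv}--\ref{thm:SbA1} transfer to $g_{\psi,L}$.

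\textbf{Main obstacle.} The delicate point is bookkeeping the $\alpha$-dependence in the weak-type and endpoint statements of Theorems \ref{thm:local}--\ref{thm:SbA1}, since those statements do not make the polynomial-in-$\alpha$ constant explicit, and the summation in Step 2 requires this. Once one goes back to the proofs in Section \ref{Introduction} and checks that each constant is indeed polynomial in $\alpha^n$, the hypothesis $\lambda>2$ makes the dyadic sum in \eqref{eq:gpsiL-2} absolutely convergent and reduces the entire theorem to a routine summation. This is why the authors are content to record the pointwise bounds \eqref{eq:gpsiL-1}--\eqref{eq:gpsiL-2} and invoke Theorems \ref{thm:Suv}--\ref{thm:SbA1} directly.
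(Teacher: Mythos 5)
Your proposal is correct and follows essentially the same route as the paper: transfer Theorems \ref{thm:Suv}--\ref{thm:SbA1} to $S_{\alpha,\psi,L}$ by running the arguments for $S_{\alpha,L}$ with the kernel decay \eqref{eq:psiL} in place of the Gaussian bound, then pass to $g^*_{\lambda,\psi,L}$ by summing the dyadic-aperture bound \eqref{eq:gpsiL-2} (convergent since $\lambda>2$ and the constants are polynomial in the aperture), and finally to $g_{\psi,L}$ via \eqref{eq:gpsiL-1} with the auxiliary $\varphi$ treated in the same framework. Your extra care about distributing the level $t$ for the weak-type and exponential-decay statements, and about tracking the polynomial-in-$\alpha$ constants, only makes explicit what the paper leaves implicit.
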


\section{Preliminaries}\label{sec:pre}

\subsection{Muckenhoupt class}
By a weight $w$, we mean that $w$ is a nonnegative locally integrable function on $\Rn$. The weight $w$ is said to belong to the Muckenhoupt class $A_p$, $1 < p<\infty$, if
\[
[w]_{A_p} :=\sup_Q \bigg(\fint_{Q}w\, dx \bigg) \bigg(\fint_{Q} w^{-\frac{1}{p-1}}dx\bigg)^{p-1}<\infty,
\]
where the supremum is taken over all cubes in $\Rn$.

\subsection{Dyadic cubes}
Denote by $\ell(Q)$ the sidelength of the cube $Q$. Given a cube $Q_0 \subset \Rn$, let $\D(Q_0)$ denote the set of all dyadic cubes with respect to $Q_0$, that is, the cubes obtained by repeated subdivision of $Q_0$ and each of its descendants into $2^n$ congruent subcubes.

\begin{definition}
A collection $\D$ of cubes is said to be a dyadic grid if it satisfies
\begin{enumerate}
\item [(1)] For any $Q \in \D$, $\ell(Q) = 2^k$ for some $k \in \Z$.
\item [(2)] For any $Q,Q' \in \D$, $Q \cap Q' = \{Q,Q',\emptyset\}$.
\item [(3)] The family $\D_k=\{Q \in \D; \ell(Q)=2^k\}$ forms a partition of $\Rn$ for any $k \in \Z$.
\end{enumerate}
\end{definition}

\begin{definition}
A subset $\S$ of a dyadic grid is said to be $\eta$-sparse, $0<\eta<1$, if for every $Q \in \S$, there exists a measurable set $E_Q \subset Q$ such that $|E_Q| \geq \eta |Q|$, and the sets $\{E_Q\}_{Q \in \S}$ are pairwise disjoint.
\end{definition}

By a median value of a measurable function $f$ on a cube $Q$ we mean a possibly non-unique, real number $m_f (Q)$ such that
\[
\max \big\{|\{x \in Q : f(x) > m_f(Q) \}|,
|\{x \in Q : f(x) < m_f(Q) \}| \big\} \leq |Q|/2.
\]
The decreasing rearrangement of a measurable function $f$ on $\Rn$ is
defined by
\[
f^*(t) = \inf \{ \alpha > 0 : |\{x \in \Rn : |f(x)| > \alpha \}| < t \},
\quad 0 < t < \infty.
\]
The local mean oscillation of $f$ is
\[
\omega_{\lambda}(f; Q)
= \inf_{c \in \R} \big( (f-c) \mathbf{1}_{Q} \big)^* (\lambda |Q|),
\quad 0 < \lambda < 1.
\]
Given a cube $Q_0$, the local sharp maximal function is
defined by
\[
M_{\lambda; Q_0}^{\sharp} f (x)
= \sup_{x \in Q \subset Q_0} \omega_{\lambda}(f; Q).
\]

Observe that for any $\delta > 0$ and $0 < \lambda < 1$
\begin{equation}\label{eq:mfQ}
|m_f(Q)| \leq (f \mathbf{1}_Q)^* (|Q|/2) \ \ \text{and} \ \
(f \mathbf{1}_Q)^* (\lambda |Q|) \leq
\left( \frac{1}{\lambda} \fint_{Q} |f|^{\delta} dx \right)^{1/{\delta}}.
\end{equation}
The following theorem was proved by Hyt\"{o}nen \cite[Theorem~2.3]{Hy2} in order to improve Lerner's formula given in \cite{Ler11} by getting rid of the local sharp maximal function.

\begin{lemma}\label{lem:mf}
Let $f$ be a measurable function on $\Rn$ and let $Q_0$ be a fixed cube. Then there exists a (possibly empty) sparse family $\S(Q_0) \subset \D(Q_0)$ such that
\begin{equation}\label{eq:mf}
|f (x) - m_f (Q_0)| \leq 2 \sum_{Q \in \S(Q_0)} \omega_{2^{-n-2}}(f; Q) \mathbf{1}_Q (x), \quad a. e. ~ x \in Q_0.
\end{equation}
\end{lemma}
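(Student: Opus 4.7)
Fix $\lambda = 2^{-n-2}$. The plan is to construct $\S(Q_0)$ by a Calder\'on--Zygmund-type stopping-time argument and then derive \eqref{eq:mf} by telescoping medians along the chain of principal cubes through $x$. The foundational single-scale estimate is as follows: for any cube $Q$, pick a near-minimizer $c_Q$ in the infimum defining $\omega_\lambda(f;Q)$; by definition of the decreasing rearrangement, $B_Q := \{y \in Q : |f(y) - c_Q| > \omega_\lambda(f;Q)\}$ satisfies $|B_Q| \le \lambda|Q|$. Since $\lambda \le 1/2$ and the rearrangement is non-increasing, applying \eqref{eq:mfQ} to $f - c_Q$ yields $|m_f(Q) - c_Q| \le ((f - c_Q)\mathbf 1_Q)^*(|Q|/2) \le \omega_\lambda(f;Q)$, so by the triangle inequality $|f - m_f(Q)| \le 2\omega_\lambda(f;Q)$ off $B_Q$.

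Set $\S_0 := \{Q_0\}$ and, given $Q \in \S_k$, let $\operatorname{ch}(Q)$ be the maximal cubes $P \in \D(Q_0)$ with $P \subsetneq Q$ and $|P \cap B_Q| > 2^{-n-1}|P|$; put $\S_{k+1} := \bigcup_{Q \in \S_k}\operatorname{ch}(Q)$ and $\S(Q_0) := \bigcup_{k \ge 0}\S_k$. The threshold $2^{-n-1}$ is calibrated to balance two requirements. For sparsity, the stopping inequality gives
\[
\sum_{P \in \operatorname{ch}(Q)}|P| \le 2^{n+1}|B_Q| \le 2^{n+1}\lambda|Q| = |Q|/2,
\]
so $E_Q := Q \setminus \bigcup_{P \in \operatorname{ch}(Q)}P$ has measure $\ge |Q|/2$, and the $E_Q$'s are pairwise disjoint across $\S(Q_0)$ by construction, producing a $1/2$-sparse family. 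For the crucial consecutive-median bound, the maximality of $P \in \operatorname{ch}(Q)$ forces its dyadic parent $\widetilde P$ to satisfy $|\widetilde P \cap B_Q| \le 2^{-n-1}|\widetilde P| = |P|/2$ (the case $\widetilde P = Q$ being covered directly by $|B_Q| \le \lambda|Q| \le |P|/2$), whence $|P \cap B_Q| \le |P|/2$. Together with \eqref{eq:mfQ} (applied to $f - m_f(Q)$ and $P$) this delivers
\[
|m_f(P) - m_f(Q)| \le \bigl((f - m_f(Q))\mathbf 1_P\bigr)^*(|P|/2) \le 2\omega_\lambda(f;Q), \qquad P \in \operatorname{ch}(Q).
\]

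For a.e.\ $x \in Q_0$, consider the chain $Q_0 = Q^{(0)} \supsetneq Q^{(1)} \supsetneq \cdots$ of principal cubes in $\S(Q_0)$ containing $x$. If this chain terminates at some $Q^{(k^*)}$ (so $x \in E_{Q^{(k^*)}}$), then every dyadic $R \subseteq Q^{(k^*)}$ containing $x$ obeys $|R \cap B_{Q^{(k^*)}}|/|R| \le 2^{-n-1} < 1$, so Lebesgue differentiation of $\mathbf 1_{B_{Q^{(k^*)}}}$ forces $x \notin B_{Q^{(k^*)}}$, i.e.\ $|f(x) - m_f(Q^{(k^*)})| \le 2\omega_\lambda(f;Q^{(k^*)})$. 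If the chain is infinite, $|Q^{(j)}| \to 0$ and median differentiation yields $m_f(Q^{(j)}) \to f(x)$. In either case, telescoping combined with the consecutive-median bound produces
\[
|f(x) - m_f(Q_0)| \le |f(x) - m_f(Q^{(k^*)})| + \sum_{j}\bigl|m_f(Q^{(j+1)}) - m_f(Q^{(j)})\bigr| \le 2 \sum_{Q \in \S(Q_0)}\omega_\lambda(f;Q)\mathbf 1_Q(x),
\]
which is \eqref{eq:mf}.

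The main obstacle, and the only delicate point, is calibrating the stopping threshold so that it \emph{simultaneously} delivers sparsity and the consecutive-median bound. Sparsity demands $2^{n+1}\lambda \le 1/2$, while maximality across a single dyadic scale requires $2^{n} \cdot 2^{-n-1} \le 1/2$ at the parent level so that $|P \cap B_Q| \le |P|/2$. These two tight constraints pin down exactly the value $\lambda = 2^{-n-2}$ appearing in the statement.
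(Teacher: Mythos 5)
Your overall strategy (a stopping-time construction driven by the exceptional sets $B_Q$ of the local oscillation, sparsity from a packing bound, telescoping of medians along the chain of stopping cubes) is exactly the standard Lerner--Hyt\"onen argument; the paper gives no proof of this lemma, it simply cites Hyt\"onen, so your template is the right one. However, there is a genuine gap at the step you yourself single out as the delicate one, the consecutive-median bound. With the strict stopping condition $|P\cap B_Q|>2^{-n-1}|P|$, maximality of $P$ only yields the \emph{non-strict} inequality $|P\cap B_Q|\le |\widetilde P\cap B_Q|\le 2^{-n-1}|\widetilde P|=|P|/2$, and with the paper's definition of the decreasing rearrangement (infimum over $\alpha$ with $|\{|g|>\alpha\}|<t$, strictly) the claimed inequality $\big((f-m_f(Q))\mathbf 1_P\big)^*(|P|/2)\le 2\omega_\lambda(f;Q)$ can fail when $|P\cap B_Q|=|P|/2$ exactly; moreover medians are not unique, so the conclusion $|m_f(P)-m_f(Q)|\le 2\omega_\lambda(f;Q)$ can fail outright for an unlucky choice of $m_f(P)$. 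Concretely, take $n=1$, $Q_0=[0,1)$, $f=\mathbf 1_{[0,1/16)}$, $\lambda=2^{-3}$. Then $\omega_\lambda(f;Q_0)=0$ (for every $\alpha>0$ the set $\{f>\alpha\}$ has measure $1/16<1/8$), one may take $c_{Q_0}=0$ and $B_{Q_0}=[0,1/16)$, and your stopping rule produces the single child $P=[0,1/8)$, since its parent $[0,1/4)$ fails the condition only through the equality $1/16\not>1/16$; thus $|P\cap B_{Q_0}|=|P|/2$. On $P$ the function equals $1$ on exactly half of $P$, so every number in $[0,1]$ is a median of $f$ on $P$, $\big((f-m_f(Q_0))\mathbf 1_P\big)^*(|P|/2)=1$, and $|m_f(P)-m_f(Q_0)|\le 2\omega_\lambda(f;Q_0)=0$ is false for every admissible median except $0$. (The lemma itself survives here only because $\omega_\lambda(f;P)=1/2$ absorbs the loss at the next generation, a bookkeeping your telescoping does not perform.)

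The gap is repairable without changing the architecture, and you should do one of the following explicitly. Either run the stopping time with the non-strict condition $|P\cap B_Q|\ge 2^{-n-1}|P|$: then maximality gives the strict bound $|P\cap B_Q|\le|\widetilde P\cap B_Q|<2^{-n-1}|\widetilde P|=|P|/2$ (and $|P\cap B_Q|\le|B_Q|\le|P|/4$ when $\widetilde P=Q$), so for every $\alpha>2\omega_\lambda(f;Q)$ one has $|\{y\in P:|f(y)-m_f(Q)|>\alpha\}|\le|P\cap B_Q|<|P|/2$, the rearrangement bound holds for the paper's definition, \eqref{eq:mfQ} then controls \emph{any} median, and the packing estimate $\sum_{P}|P|\le 2^{n+1}|B_Q|\le|Q|/2$ and the differentiation argument at terminal cubes are unaffected. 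Or keep the strict threshold but \emph{choose} the medians on the stopping cubes: since $|\{f-m_f(Q)>\beta\}\cap P|\le|P|/2$ and $|\{f-m_f(Q)<-\beta\}\cap P|\le|P|/2$ with $\beta=2\omega_\lambda(f;Q)$, the closed interval of medians of $f$ on $P$ meets $[m_f(Q)-\beta,\,m_f(Q)+\beta]$, and this choice is legitimate because only $m_f(Q_0)$ appears in the statement. Finally, a smaller point: you pick a ``near-minimizer'' $c_Q$ but then define $B_Q$ with the exact threshold $\omega_\lambda(f;Q)$, which an approximate minimizer does not justify and which would cost you the exact constant $2$; note instead that $c\mapsto((f-c)\mathbf 1_Q)^*(\lambda|Q|)$ is $1$-Lipschitz and tends to $\infty$ as $|c|\to\infty$ when $f$ is finite a.e.\ on $Q$, so a true minimizer exists and should be used. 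With these repairs the remaining steps (disjointness of the $E_Q$'s, the $1/2$-sparsity, and the Lebesgue/median differentiation in the terminating and infinite-chain cases) are correct.
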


\subsection{Orlicz maximal operators}
A function $\Phi:[0,\infty) \to [0,\infty)$ is called a Young function if it is continuous, convex, strictly increasing, and satisfies
\begin{equation*}
\lim_{t\to 0^{+}}\frac{\Phi(t)}{t}=0 \quad\text{and}\quad \lim_{t\to\infty}\frac{\Phi(t)}{t}=\infty.
\end{equation*}
Given $p \in[1, \infty)$, we say that a Young function $\Phi$ is a  $p$-Young function, if $\Psi(t)=\Phi(t^{1/p})$ is a Young function.

If $A$ and $B$ are Young functions, we write $A(t) \simeq B(t)$ if there are constants $c_1, c_2>0$ such that
$c_1 A(t) \leq B(t) \leq c_2 A(t)$ for all $t \geq t_0>0$. Also, we denote $A(t) \preceq B(t)$ if there exists $c>0$ such that $A(t) \leq B(ct)$ for all $t \geq t_0>0$. Note that for all Young functions $\phi$, $t \preceq \phi(t)$. Further, if $A(t)\leq cB(t)$ for some $c>1$, then by convexity, $A(t) \leq B(ct)$.

A function $\Phi$ is said to be doubling, or $\Phi \in \Delta_2$, if there is a constant $C>0$ such that $\Phi(2t) \leq C \Phi(t)$ for any $t>0$. Given a Young function $\Phi$, its complementary function $\bar{\Phi}:[0,\infty) \to [0,\infty)$ is defined by
\[
\bar{\Phi}(t):=\sup_{s>0}\{st-\Phi(s)\}, \quad t>0,
\]
which clearly implies that
\begin{align}\label{eq:stst}
st \leq \Phi(s) + \bar{\Phi}(t), \quad s, t > 0.
\end{align}
Moreover, one can check that $\bar{\Phi}$ is also a Young function and
\begin{equation}\label{eq:Young-1}
t \leq \Phi^{-1}(t) \bar{\Phi}^{-1}(t) \leq 2t, \qquad t>0.
\end{equation}
In turn, by replacing $t$ by $\Phi(t)$ in first inequality of \eqref{eq:Young-1}, we obtain
\begin{equation}\label{eq:Young-2}
\bar{\Phi} \Big(\frac{\Phi(t)}{t}\Big) \leq \Phi(t), \qquad t>0.
\end{equation}

Given a Young function $\Phi$, we define the Orlicz space $L^{\Phi}(\Omega, u)$ to be the function space with Luxemburg norm
\begin{align}\label{eq:Orlicz}
\|f\|_{L^{\Phi}(\Omega, u)} := \inf\bigg\{\lambda>0:
\int_{\Omega} \Phi \Big(\frac{|f(x)|}{\lambda}\Big) du(x) \leq 1 \bigg\}.
\end{align}
Now we define the Orlicz maximal operator
\begin{align*}
M_{\Phi}f(x) := \sup_{Q \ni x} \|f\|_{\Phi, Q} := \sup_{Q \ni x} \|f\|_{L^{\Phi}(Q, \frac{dx}{|Q|})},
\end{align*}
where the supremum is taken over all cubes $Q$ in $\Rn$. When $\Phi(t)=t^p$, $1\leq p<\infty$,
\begin{align*}
\|f\|_{\Phi, Q} = \bigg(\fint_{Q} |f(x)|^p dx \bigg)^{\frac1p}=:\|f\|_{p, Q}.
\end{align*}
In this case, if $p=1$, $M_{\Phi}$ agrees with the classical Hardy-Littlewood maximal operator $M$; if $p>1$, $M_{\Phi}f=M_pf:=M(|f|^p)^{1/p}$. If $\Phi(t) \preceq \Psi(t)$, then $M_{\Phi}f(x) \leq c M_{\Psi}f(x)$ for all $x \in \Rn$.

The H\"{o}lder inequality can be generalized to the scale of Orlicz spaces \cite[Lemma~5.2]{CMP11}.
\begin{lemma}
Given a Young function $A$, then for all cubes $Q$,
\begin{equation}\label{eq:Holder-AA}
\fint_{Q} |fg| dx \leq 2 \|f\|_{A, Q} \|g\|_{\bar{A}, Q}.
\end{equation}
More generally, if $A$, $B$ and $C$ are Young functions such that $A^{-1}(t) B^{-1}(t) \leq c_1 C^{-1}(t), $ for all $t \geq t_0>0$,
then
\begin{align}\label{eq:Holder-ABC}
\|fg\|_{C, Q} \leq c_2 \|f\|_{A, Q} \|g\|_{B, Q}.
\end{align}
\end{lemma}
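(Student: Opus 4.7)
The plan is to deduce both H\"older-type inequalities by normalizing the Luxemburg norms and then reducing the problem to a pointwise Young-type estimate that integrates cleanly over the cube.

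For the first inequality \eqref{eq:Holder-AA} I would set $a := \|f\|_{A,Q}$ and $b := \|g\|_{\bar A,Q}$, dispose of the trivial cases ($a$ or $b$ equal to $0$ or $\infty$), and use the definition \eqref{eq:Orlicz} to read off
\[
\fint_Q A(|f|/a)\,dx \leq 1 \qquad \text{and} \qquad \fint_Q \bar A(|g|/b)\,dx \leq 1.
\]
Feeding $s = |f(x)|/a$ and $t = |g(x)|/b$ into the Young-type inequality \eqref{eq:stst} produces
\[
\frac{|f(x)g(x)|}{ab} \leq A\Bigl(\frac{|f(x)|}{a}\Bigr) + \bar A\Bigl(\frac{|g(x)|}{b}\Bigr),
\]
and averaging over $Q$ yields $\fint_Q |fg|\,dx \leq 2ab$, which is exactly \eqref{eq:Holder-AA}.

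For the generalized H\"older inequality \eqref{eq:Holder-ABC}, the first step is to upgrade the univariate hypothesis $A^{-1}(t) B^{-1}(t) \leq c_1 C^{-1}(t)$ to the bivariate comparison
\[
A^{-1}(s) B^{-1}(t) \leq c_1 \, C^{-1}(s+t), \qquad s, t \geq t_0,
\]
which follows purely from the monotonicity of $A^{-1}$, $B^{-1}$ and $C^{-1}$: in either case $s \leq t$ or $t \leq s$, the smaller argument may be raised to $\max(s,t)$ on the left and then the hypothesis bounds the resulting product by $c_1 C^{-1}(\max(s,t)) \leq c_1 C^{-1}(s+t)$. Setting $u = A^{-1}(s)$ and $v = B^{-1}(t)$, this rewrites equivalently as $C(uv/c_1) \leq A(u) + B(v)$ whenever $A(u), B(v) \geq t_0$. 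With $a = \|f\|_{A,Q}$, $b = \|g\|_{B,Q}$, $u = |f(x)|/a$ and $v = |g(x)|/b$, averaging over $Q$ gives $\fint_Q C\bigl(|fg|/(c_1 a b)\bigr)\,dx \leq 2$, and a single application of convexity of $C$ (together with $C(0)=0$) upgrades this to $\fint_Q C\bigl(|fg|/(2c_1 a b)\bigr)\,dx \leq 1$, i.e.\ $\|fg\|_{C,Q} \leq 2 c_1 \|f\|_{A,Q} \|g\|_{B,Q}$.

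The main obstacle I anticipate is the threshold restriction $t \geq t_0 > 0$ in the hypothesis, which invalidates the pointwise Young-type bound on the subset of $Q$ where $A(|f|/a) + B(|g|/b) < t_0$. The standard workaround is to split $Q$ into this low-value set and its complement: on the complement the estimate above applies directly, while on the low-value set $|f|/a$ and $|g|/b$ are pointwise bounded by constants depending only on $A$, $B$ and $t_0$, so the contribution of $|fg|$ to the Luxemburg norm is controlled by a universal constant. Combined with the constant coming from the main region, this yields the final constant $c_2 = c_2(c_1, A, B, t_0)$ appearing in \eqref{eq:Holder-ABC}.
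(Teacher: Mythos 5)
This lemma is not proved in the paper at all: it is quoted from \cite[Lemma~5.2]{CMP11}, so there is no in-paper argument to compare against. Your proof is the standard one and is essentially correct. The first inequality follows exactly as you say from the Young inequality \eqref{eq:stst} applied to $|f|/\|f\|_{A,Q}$ and $|g|/\|g\|_{\bar A,Q}$, after noting that $\fint_Q A(|f|/\|f\|_{A,Q})\,dx\le 1$ by continuity of $A$.

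One bookkeeping point in the second part should be fixed. As stated, your bivariate inequality requires \emph{both} $s=A(u)\ge t_0$ and $t=B(v)\ge t_0$, while the exceptional set you propose to remove is $\{A(|f|/a)+B(|g|/b)<t_0\}$; the intermediate region where exactly one of the two quantities lies below $t_0$ is covered by neither half of your argument, and there $|f|/a$ and $|g|/b$ are not both pointwise bounded, so the product is not trivially controlled. The repair is already contained in your own monotonicity argument: raising the smaller argument to $\max(s,t)$ only uses the hypothesis at $\max(s,t)$, so the pointwise bound $C(uv/c_1)\le A(u)+B(v)$ holds whenever $\max(A(u),B(v))\ge t_0$. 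Splitting $Q$ along $\{\max(A(|f|/a),B(|g|/b))\ge t_0\}$ instead, the complementary set is exactly where $|f|/a\le A^{-1}(t_0)$ and $|g|/b\le B^{-1}(t_0)$, hence $|fg|/(ab)\le A^{-1}(t_0)B^{-1}(t_0)\le c_1 C^{-1}(t_0)$ and $C\bigl(|fg|/(c_1 ab)\bigr)\le t_0$ there; adding the two regions gives $\fint_Q C\bigl(|fg|/(c_1 ab)\bigr)dx\le 2+t_0$, and convexity of $C$ yields \eqref{eq:Holder-ABC} with $c_2=c_1(2+t_0)$. With this small adjustment your argument is complete and coincides with the classical proof of the cited result.
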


 The following result is an extension of the well-known Coifman-Rochberg theorem. The proof can be found in \cite[Lemma~4.2]{HP}.
\begin{lemma}
Let $\Phi$ be a Young function and $w$ be a nonnegative function such that $M_{\Phi}w(x)<\infty$ a.e.. Then
 \begin{align}
 \label{eq:CR-Phi} [(M_{\Phi}w)^{\delta}]_{A_1} &\le c_{n,\delta}, \quad\forall \delta \in (0, 1),
 \\
\label{eq:MPhiRH} [(M_{\Phi} w)^{-\lambda}]_{RH_{\infty}} &\le c_{n,\lambda},\quad\forall \lambda>0.
 \end{align}
 \end{lemma}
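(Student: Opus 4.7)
The plan is to prove both \eqref{eq:CR-Phi} and \eqref{eq:MPhiRH} in parallel from two key inputs. The first is the weak-type distributional inequality
\begin{equation*}
|\{y \in \Rn : M_{\Phi} w(y) > t\}| \le C_n \int_{\Rn} \Phi(w(y)/t)\, dy, \qquad t > 0,
\end{equation*}
which follows because, by definition \eqref{eq:Orlicz} of the Luxemburg norm, $\|w\|_{\Phi, Q} > t$ forces $\fint_{Q} \Phi(w/t)\, dx \ge 1$, hence $\{M_{\Phi}w > t\} \subseteq \{M(\Phi(w/t)) \ge 1\}$, and then invoking the classical weak-$(1,1)$ inequality for the Hardy--Littlewood maximal operator $M$. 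The second input is a geometric localization: for any cube $Q$ and all $x, y \in Q$,
\begin{equation*}
M_{\Phi}(w\mathbf{1}_{\Rn \setminus 3Q})(y) \le C_n\, M_{\Phi} w(x),
\end{equation*}
because every cube $Q' \ni y$ meeting $\Rn \setminus 3Q$ satisfies $\ell(Q') \ge \ell(Q)$, and hence sits inside a dilate $Q'' \ni x$ with $|Q''| \le C_n |Q'|$, so that $\|w\|_{\Phi, Q'} \le C_n \|w\|_{\Phi, Q''} \le C_n M_{\Phi}w(x)$.

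To prove \eqref{eq:CR-Phi}, the task is to show $\fint_Q (M_{\Phi}w)^{\delta}\, dy \le c_{n,\delta}(M_{\Phi}w(x))^{\delta}$ for any cube $Q$ and any $x \in Q$. First I would decompose $w = w_1 + w_2$ with $w_1 = w\mathbf{1}_{3Q}$ and use the subadditivity $(a+b)^{\delta} \le a^{\delta} + b^{\delta}$. The far part is handled by the localization estimate above. For the near part, I would use a layer-cake argument splitting the distributional integral at $t_0 := \|w\|_{\Phi, 3Q}$: below $t_0$ use the trivial bound $|\{\cdot\}| \le |Q|$, while above $t_0$ combine the weak-type inequality with the convexity estimate $\Phi(w/t) \le (t_0/t)\Phi(w/t_0)$ (valid since $\Phi$ is convex and vanishes at zero) to get $|\{y \in Q : M_{\Phi}w_1(y) > t\}|/|Q| \le C_n\, t_0/t$. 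Integrating yields $\fint_Q (M_{\Phi}w_1)^{\delta}\, dy \le c_{n,\delta}\, t_0^{\delta}$, and the inequality $t_0 \le M_{\Phi}w(x)$ closes the estimate.

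For \eqref{eq:MPhiRH}, the goal is $(M_{\Phi}w(x))^{-\lambda} \le c_{n,\lambda}\, \fint_Q (M_{\Phi}w)^{-\lambda}\, dy$ for a.e. $x \in Q$. With the same decomposition, the strategy is to locate a subset $E \subset Q$ of comparable measure on which $M_{\Phi}w(y) \le C_1\, M_{\Phi}w(x)$. Combining the weak-type inequality with the convexity bound $\Phi(u/C_0) \le \Phi(u)/C_0$ yields
\begin{equation*}
|\{y \in Q : M_{\Phi}w_1(y) > C_0 \|w\|_{\Phi, 3Q}\}| \le \frac{C_n\, 3^n}{C_0}\, |Q|,
\end{equation*}
which is at most $|Q|/2$ for $C_0$ depending only on $n$. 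Taking $E$ to be the complement and adding back the far-part localization gives $M_{\Phi}w(y) \le C_1\, M_{\Phi}w(x)$ on $E$, and integrating $(M_{\Phi}w)^{-\lambda} \ge C_1^{-\lambda}(M_{\Phi}w(x))^{-\lambda}$ over $E$ produces the required lower bound on $\fint_Q (M_{\Phi}w)^{-\lambda}\, dy$.

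The main obstacle is isolating the weak-type distributional estimate for $M_{\Phi}$ in a form with constants independent of the Young function $\Phi$; once this is in hand, the classical Coifman--Rochberg scheme adapts cleanly, with \eqref{eq:CR-Phi} following from a Kolmogorov-type layer-cake computation and \eqref{eq:MPhiRH} from a level-set majorization.
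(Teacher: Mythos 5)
Your proof is correct: the $\Phi$-independent weak-type distributional estimate for $M_{\Phi}$, the $3Q$ localization, the Kolmogorov layer-cake computation for \eqref{eq:CR-Phi}, and the majority-set argument for \eqref{eq:MPhiRH} all go through, the only glossed (but routine) points being that $\fint_{3Q}\Phi\big(w/\|w\|_{\Phi,3Q}\big)\,dx\le 1$ and that the Luxemburg norm doubles under cube dilation by convexity of $\Phi$. The paper does not prove this lemma itself but cites \cite[Lemma~4.2]{HP}, and your argument is essentially the Coifman--Rochberg scheme underlying that reference, so there is no substantive difference in approach.
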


Given $p \in (1, \infty)$, a Young function $\Phi$ is said to satisfy the $B_p$ condition (or, $\Phi \in B_p$) if for some $c>0$,
\begin{align}\label{def:Bp}
\int_{c}^{\infty} \frac{\Phi(t)}{t^p} \frac{dt}{t} < \infty.
\end{align}
Observe that if \eqref{def:Bp} is finite for some $c>0$, then it is finite for every $c>0$. Let $[\Phi]_{B_p}$ denote the value if $c=1$ in \eqref{def:Bp}. It was shown in \cite[Proposition~5.10]{CMP11} that if $\Phi$ and $\bar{\Phi}$ are doubling Young functions, then $\Phi \in B_p$ if and only if
\begin{align*}
\int_{c}^{\infty} \bigg(\frac{t^{p'}}{\bar{\Phi}(t)}\bigg)^{p-1} \frac{dt}{t} < \infty.
\end{align*}

Let us present two types of $B_p$ bumps. An important special case is the ``log-bumps" of the form
\begin{align}\label{eq:log}
A(t) =t^p \log(e+t)^{p-1+\delta}, \quad  B(t) =t^{p'} \log(e+t)^{p'-1+\delta},\quad \delta>0.
\end{align}
Another interesting example is the ``loglog-bumps" as follows:
\begin{align}
\label{eq:loglog-1} &A(t)=t^p \log(e+t)^{p-1} \log\log(e^e+t)^{p-1+\delta}, \quad \delta>0\\
\label{eq:loglog-2} &B(t)=t^{p'} \log(e+t)^{p'-1} \log\log(e^e+t)^{p'-1+\delta}, \quad \delta>0.
\end{align}
Then one can verify that in both cases above, $\bar{A} \in B_{p'}$ and $\bar{B} \in B_p$ for any $1<p<\infty$.

The $B_p$ condition can be also characterized by the boundedness of the Orlicz maximal operator $M_{\Phi}$. Indeed, the following result was given in \cite[Theorem~5.13]{CMP11} and \cite[eq. (25)]{HP}.
\begin{lemma}\label{lem:MBp}
Let $1<p<\infty$. Then $M_{\Phi}$ is bounded on $L^p(\Rn)$ if and only if $\Phi \in B_p$. Moreover, $\|M_{\Phi}\|_{L^p(\Rn) \to L^p(\Rn)} \le C_{n,p} [\Phi]_{B_p}^{\frac1p}$. In particular, if the Young function $A$ is the same as the first one in \eqref{eq:log} or \eqref{eq:loglog-1}, then
\begin{equation}\label{eq:MAnorm}
\|M_{\bar{A}}\|_{L^{p'}(\Rn) \to L^{p'}(\Rn)} \le c_n p^2 \delta^{-\frac{1}{p'}},\quad\forall \delta \in (0, 1].
\end{equation}
\end{lemma}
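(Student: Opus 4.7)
The plan is to prove the equivalence and the quantitative norm bound by a Calderón--Zygmund decomposition of the level sets of the dyadic Orlicz maximal operator, and then to derive the sharp constant in the log-bump case by a direct estimate of $[\bar{A}]_{B_{p'}}$. I would first reduce to a fixed dyadic grid by the Fefferman--Stein type observation $M_{\Phi}f(x) \lesssim_n \sum_{i=1}^{3^n} M_{\Phi}^{\D_i} f(x)$: every cube $Q \subset \Rn$ sits inside some $Q' \in \D_i$ with $|Q'| \le c_n |Q|$, and the convexity of $\Phi$ combined with $\Phi(t/c) \le \Phi(t)/c$ for $c \ge 1$ then yields $\|f\|_{\Phi, Q} \le c_n \|f\|_{\Phi, Q'}$, so it suffices to bound $M_{\Phi}^{\D}$ on one grid.

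Second, for each $k \in \Z$ I would set $\Omega_k := \{M_{\Phi}^{\D} f > 2^k\} = \bigsqcup_j Q_j^k$, the disjoint union of maximal dyadic cubes. By maximality, $\|f\|_{\Phi, Q_j^k} > 2^k$, which via \eqref{eq:Orlicz} forces $|Q_j^k| \le \int_{Q_j^k} \Phi(|f|/2^k) dx$. The layer-cake formula then produces
\[
\|M_{\Phi}^{\D}f\|_{L^p(\Rn)}^p
\lesssim \sum_{k \in \Z} 2^{kp} \sum_j |Q_j^k|
\le \sum_{k \in \Z} 2^{kp} \int_{\Rn} \Phi(|f|/2^k) dx.
\]
Swapping sum and integral and comparing the resulting geometric sum with the integral $\int_0^\infty \Phi(|f|/t) t^p \frac{dt}{t}$ (via the substitution $s = |f|/t$), I obtain
\[
\|M_{\Phi}^{\D}f\|_{L^p}^p \lesssim \|f\|_{L^p}^p \int_0^\infty \frac{\Phi(s)}{s^p} \frac{ds}{s} \lesssim [\Phi]_{B_p} \|f\|_{L^p}^p,
\]
since $\int_0^1 \Phi(s)/s^p \, ds/s$ converges automatically ($\Phi(s)/s$ is non-decreasing and $p>1$). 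Necessity is obtained by testing on $\mathbf{1}_{Q_0}$: the identity $\|\mathbf{1}_{Q_0}\|_{\Phi, Q} = 1/\Phi^{-1}(|Q|/|Q_0|)$ for $Q \supset Q_0$ shows that $M_{\Phi}\mathbf{1}_{Q_0} \in L^p(\Rn)$ forces $\int^{\infty} \Phi(s)/s^p \, ds/s < \infty$.

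Third, for the refined bound \eqref{eq:MAnorm}, I would verify the asymptotic $\bar{A}(t) \simeq t^{p'} \log(e+t)^{-(1+(p'-1)\delta)}$ for the log-bump $A(t) = t^p \log(e+t)^{p-1+\delta}$, using the identity $(p-1)(p'-1)=1$, and argue analogously in the loglog case. An elementary one-variable calculation then yields
\[
[\bar{A}]_{B_{p'}} \lesssim \int_1^\infty \frac{dt}{t \log(e+t)^{1+(p'-1)\delta}} \lesssim \frac{1}{(p'-1)\delta} \lesssim \frac{p}{\delta}.
\]
Combining this with the quantitative bound $\|M_{\bar{A}}\|_{L^{p'}\to L^{p'}} \le C_{n,p'} [\bar{A}]_{B_{p'}}^{1/p'}$ and tracking the $p$-dependence through the dyadic reduction and the Calderón--Zygmund step produces the claimed $c_n p^2 \delta^{-1/p'}$. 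The main obstacle is obtaining the precise polynomial factor $p^2$ rather than a larger polynomial: this requires running the level-set decomposition with an auxiliary base $a > 1$ in place of $2$ and optimizing in $a$ at the end (or appealing to Pérez's sharp version), together with ensuring that the asymptotic equivalence for $\bar{A}$ holds with constants independent of both $\delta \in (0,1]$ and $p$, which is the usual delicate point in these sharp-constant arguments.
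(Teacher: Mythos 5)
The paper does not prove this lemma at all (it is quoted from \cite[Theorem~5.13]{CMP11} and \cite[eq.~(25)]{HP}), so your attempt has to stand on its own, and it has a genuine gap at the decisive step. After the Calder\'on--Zygmund decomposition you drop the localization to $\Omega_k$, sum $2^{kp}\int_{\Rn}\Phi(|f|/2^k)\,dx$ over \emph{all} $k\in\Z$, and compare with $\|f\|_{L^p}^p\int_0^\infty \Phi(s)s^{-p}\,\frac{ds}{s}$, asserting that the part of the integral near $0$ ``converges automatically''. It does not: monotonicity of $\Phi(s)/s$ only gives $\Phi(s)\le \Phi(1)s$ for $s\le 1$, which is useless against $s^{-p}$ when $p>1$. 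Concretely, $\Phi(t)=t^r$ with $1<r<p$ is a Young function in $B_p$ (and $M_\Phi=M_r$ is certainly bounded on $L^p$), yet $\int_0^1 t^{r-p-1}\,dt=\infty$; your majorant is identically $+\infty$, so the chain proves nothing, and even when the full integral is finite it is not controlled by $[\Phi]_{B_p}$, which only sees $t\ge 1$ by \eqref{def:Bp}. The missing idea, which is exactly the point of the cited proof of P\'erez/Cruz-Uribe--Martell--P\'erez, is a truncation before summing in $k$: from $\fint_{Q_j^k}\Phi(|f|/2^k)\,dx>1$ one discards the set where $|f|\le \Phi^{-1}(1/2)\,2^k$, whose contribution is at most $1/2$, to get $|Q_j^k|\le 2\int_{Q_j^k\cap\{|f|>\Phi^{-1}(1/2)2^k\}}\Phi(|f|/2^k)\,dx$. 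Then for each fixed $x$ only the scales $2^k<|f(x)|/\Phi^{-1}(1/2)$ survive, the substitution $s=|f(x)|/2^k$ runs over $s\gtrsim \Phi^{-1}(1/2)$, and one lands on the tail integral defining $[\Phi]_{B_p}$ rather than on $\int_0^\infty$. Your dyadic reduction, the identity $|Q_j^k|\le\int_{Q_j^k}\Phi(|f|/2^k)\,dx$, and the necessity argument via $\|\mathbf{1}_{Q_0}\|_{\Phi,Q}=1/\Phi^{-1}(|Q|/|Q_0|)$ are all fine; it is this one omission that breaks the sufficiency and the quantitative bound.

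Two further points. Your computation $\bar{A}(t)\simeq t^{p'}\log(e+t)^{-(1+(p'-1)\delta)}$ and hence $[\bar{A}]_{B_{p'}}\lesssim p/\delta$ is correct, but \eqref{eq:MAnorm} is still not established: combining $[\bar{A}]_{B_{p'}}^{1/p'}\lesssim p\,\delta^{-1/p'}$ with the qualitative bound $\|M_{\bar{A}}\|_{L^{p'}\to L^{p'}}\le C_{n,p'}[\bar{A}]_{B_{p'}}^{1/p'}$ only yields $c_np^2\delta^{-1/p'}$ if one shows $C_{n,p'}\lesssim_n p$, and a factor of order $p$ is genuinely needed here (since $M\le \bar{A}^{-1}(1)M_{\bar{A}}$ with $\bar{A}^{-1}(1)$ bounded, while $\|M\|_{L^{p'}\to L^{p'}}\simeq_n p$); your layer-cake bookkeeping does not produce this dependence and you explicitly defer it to ``P\'erez's sharp version'', i.e.\ to the very statement being proved. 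Finally, be aware that the constant in the general bound cannot be completely free of a normalization of $\Phi$ at height one (rescaling $\Phi\mapsto c\Phi$ with $c$ large shows this), which is another reason the truncation at $\Phi^{-1}(1/2)2^k$, rather than a bare comparison with $\int_0^\infty\Phi(s)s^{-p-1}\,ds$, is the right mechanism.
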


\begin{definition}\label{def:sepbum}
Given $p \in (1, \infty)$, let $A$ and $B$ be Young functions such that $\bar{A} \in B_{p'}$ and $\bar{B} \in B_p$. We say that the pair of weights $(u, v)$ satisfies the {\tt double bump condition} with respect to $A$ and $B$ if
\begin{align}\label{eq:uvABp}
[u, v]_{A,B,p}:=\sup_{Q} \|u^{\frac1p}\|_{A,Q} \|v^{-\frac1p}\|_{B,Q} < \infty.
\end{align}
where the supremum is taken over all cubes $Q$ in $\Rn$. Also, $(u, v)$ is said to satisfy the {\tt separated bump condition} if
\begin{align}
\label{eq:uvAp} [u, v]_{A,p'} &:= \sup_{Q} \|u^{\frac1p}\|_{A,Q} \|v^{-\frac1p}\|_{p',Q} < \infty,
\\
\label{eq:uvpB} [u, v]_{p,B} &:= \sup_{Q} \|u^{\frac1p}\|_{p,Q} \|v^{-\frac1p}\|_{B,Q} < \infty.
\end{align}
\end{definition}

Note that if $A(t)=t^p$ in \eqref{eq:uvAp} or $B(t)=t^p$ in \eqref{eq:uvpB}, each of them actually is two-weight $A_p$ condition and we denote them by $[u, v]_{A_p}:=[u, v]_{p,p'}$.  Also, the separated bump condition is weaker than the double bump condition. Indeed, \eqref{eq:uvABp} implies \eqref{eq:uvAp} and \eqref{eq:uvpB}, but the reverse direction is incorrect.
The first fact holds since $\bar{A} \in B_{p'}$ and $\bar{B} \in B_p$ respectively indicate $A$ is a $p$-Young function and $B$ is a $p'$-Young function. The second fact was shown in \cite[Section~7]{ACM} by constructing log-bumps.

\begin{lemma}\label{lem:M-uv}
Let $1<p<\infty$, let $A$, $B$ and $\Phi$ be Young functions such that $A \in B_p$ and $A^{-1}(t)B^{-1}(t) \lesssim \Phi^{-1}(t)$ for any $t>t_0>0$. If a pair of weights $(u, v)$ satisfies $[u, v]_{p, B}<\infty$, then
\begin{align}\label{eq:MPhi-uv}
\|M_{\Phi}f\|_{L^p(u)} \leq C [u, v]_{p, B} [A]_{B_p}^{\frac1p} \|f\|_{L^p(v)}.
\end{align}
Moreover, \eqref{eq:MPhi-uv} holds for $\Phi(t)=t$ and $B=\bar{A}$ satisfying the same hypotheses.  In this case, $\bar{A} \in B_p$ is necessary.
\end{lemma}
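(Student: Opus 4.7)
The plan is to combine the generalized Orlicz Hölder inequality with a dyadic Calderón-Zygmund stopping-time decomposition so as to dominate $M_\Phi f$, after integration against $u$, by $M_A(fv^{1/p})$ with a prefactor of $[u,v]_{p,B}$; the $L^p$-boundedness of $M_A$ from Lemma~\ref{lem:MBp} then closes the estimate. First, decomposing $f = (fv^{1/p})\cdot v^{-1/p}$ and applying the generalized Hölder inequality \eqref{eq:Holder-ABC} with the hypothesis $A^{-1}(t) B^{-1}(t) \lesssim \Phi^{-1}(t)$ give, for every cube $Q$,
\[
\|f\|_{\Phi,Q} \,\lesssim\, \|fv^{1/p}\|_{A,Q}\,\|v^{-1/p}\|_{B,Q} \,\leq\, [u,v]_{p,B}\,\|fv^{1/p}\|_{A,Q}\,\bigg(\fint_Q u\bigg)^{-1/p},
\]
the last inequality being the definition of $[u,v]_{p,B}$. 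Raising to the $p$-th power and multiplying by $u(Q)$, one obtains the compact bound $\|f\|_{\Phi,Q}^p\, u(Q) \lesssim [u,v]_{p,B}^p\,\|fv^{1/p}\|_{A,Q}^p\, |Q|$.

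By the $3^n$-dyadic-grid trick it suffices to treat the dyadic Orlicz maximal $M_\Phi^{\D}$ associated with a fixed grid $\D$. Fix $a = 2^{n+1}$, and for each $k \in \Z$ set $\Omega_k := \{M_\Phi^{\D} f > a^k\}$, with $\{Q_j^k\}_j$ denoting the maximal dyadic cubes in $\Omega_k$; the doubling of Orlicz averages under passage to a dyadic parent yields $a^k < \|f\|_{\Phi,Q_j^k}\leq 2^n a^k$. Setting $E_j^k := Q_j^k \setminus \Omega_{k+1}$, the sets $\{E_j^k\}_{k,j}$ are pairwise disjoint, and the sparsity $|E_j^k|\geq\tfrac12|Q_j^k|$ follows from the convexity of $\Phi$ alone: since $\fint_{Q_i^{k+1}}\Phi(|f|/a^{k+1}) > 1$ on each child and $\Phi((2^n/a)s)\leq(2^n/a)\Phi(s)$,
\[
\sum_{Q_i^{k+1}\subsetneq Q_j^k}|Q_i^{k+1}| \,\leq\, \int_{Q_j^k}\Phi(|f|/a^{k+1}) \,\leq\, \tfrac{2^n}{a}\int_{Q_j^k}\Phi(|f|/(2^n a^k)) \,\leq\, \tfrac12\,|Q_j^k|.
\]

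With $M_\Phi^{\D} f \leq a^{k+1}$ on $\Omega_k\setminus\Omega_{k+1}$, $(\Omega_k\setminus\Omega_{k+1})\cap Q_j^k = E_j^k$, and $\|fv^{1/p}\|_{A,Q_j^k}\leq\inf_{E_j^k}M_A^{\D}(fv^{1/p})$, one computes
\begin{align*}
\int_{\Rn} (M_\Phi^{\D} f)^p u\, dx
&\leq a^p\sum_{k,j}a^{kp}\,u(E_j^k) \,\leq\, a^p\sum_{k,j}\|f\|_{\Phi,Q_j^k}^p\,u(Q_j^k)\\
&\lesssim [u,v]_{p,B}^p\sum_{k,j}\|fv^{1/p}\|_{A,Q_j^k}^p\,|Q_j^k|\\
&\leq 2\,[u,v]_{p,B}^p\sum_{k,j}\int_{E_j^k}\big(M_A^{\D}(fv^{1/p})\big)^p dx\\
&\leq 2\,[u,v]_{p,B}^p\int_{\Rn}\big(M_A^{\D}(fv^{1/p})\big)^p dx \,\lesssim\, [u,v]_{p,B}^p\,[A]_{B_p}\,\|f\|_{L^p(v)}^p,
\end{align*}
using in turn the pointwise bound from the first paragraph, the sparsity $|Q_j^k|\leq 2|E_j^k|$, and finally Lemma~\ref{lem:MBp}. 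Taking $p$-th roots yields \eqref{eq:MPhi-uv}. For the ``Moreover'' assertion, with $\Phi(t) = t$ and $B = \bar A$ the bump condition $A^{-1}(t)\bar A^{-1}(t) \leq 2t$ is automatic by \eqref{eq:Young-1}, so the main estimate applies to $M$; the necessity of $\bar A \in B_p$ in this one-weight specialization is the classical converse of Pérez's theorem, obtained by testing \eqref{eq:MPhi-uv} against suitable atoms and reducing via Lemma~\ref{lem:MBp}.

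I do not foresee a genuine obstacle: the only delicate point is the sparsity $|E_j^k|\geq\tfrac12|Q_j^k|$, but this is settled cleanly by the convexity of $\Phi$ once the height parameter is fixed as $a = 2^{n+1}$, so the argument proceeds directly as above.
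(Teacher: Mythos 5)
Your argument for the main estimate \eqref{eq:MPhi-uv} is correct: the generalized H\"older bound $\|f\|_{\Phi,Q}\lesssim \|fv^{1/p}\|_{A,Q}\|v^{-1/p}\|_{B,Q}$ via \eqref{eq:Holder-ABC}, the Calder\'on--Zygmund decomposition of $\{M^{\D}_\Phi f>a^k\}$ with $a=2^{n+1}$, the sparsity estimate from convexity of $\Phi$, and the final appeal to Lemma \ref{lem:MBp} all fit together, and the constant $[u,v]_{p,B}[A]_{B_p}^{1/p}$ comes out as claimed. Be aware, though, that the paper does not prove this lemma at all: it is imported verbatim with a citation to \cite[Theorem~5.14]{CMP11} and \cite[Theorem~3.1]{CP99}, and what you have written is essentially a reconstruction of exactly the proof given in those references (dyadic Orlicz maximal function, level-set decomposition, bump H\"older inequality, $B_p$-boundedness of $M_A$), not a genuinely different route. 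Two routine points you pass over in silence are the usual ones: the existence of the maximal cubes $Q_j^k$ (and $M^{\D}_\Phi f<\infty$ a.e.) requires first reducing to, say, bounded compactly supported $f$, and the ``$3^n$-grid trick'' needs the elementary comparison $\|f\|_{\Phi,Q}\le c\,\|f\|_{\Phi,Q'}$ for $Q\subset Q'$ with $|Q'|\lesssim|Q|$; both are standard and harmless.

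The one real gap is the last sentence of the lemma. For the ``moreover'' part you correctly observe that $\Phi(t)=t$, $B=\bar{A}$ satisfies $A^{-1}(t)\bar{A}^{-1}(t)\le 2t$ by \eqref{eq:Young-1}, so the sufficiency follows from what you proved; but the necessity assertion is not proved by ``testing against suitable atoms and reducing via Lemma \ref{lem:MBp}''---that is a gesture, not an argument. The classical necessity statement (P\'erez; see \cite{CP99,CMP11}) is that if the separated bump condition with bump $B$ on $v^{-1/p}$ implies $M:L^p(v)\to L^p(u)$ for \emph{all} pairs $(u,v)$, then the complementary function of the bump must lie in $B_p$; with $B=\bar{A}$ this is the condition $A\in B_p$ appearing in the hypotheses (the paper's phrasing ``$\bar{A}\in B_p$ is necessary'' should be read in this light). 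Proving it requires constructing explicit weight pairs for which failure of the $B_p$ condition defeats the $L^p$ bound; Lemma \ref{lem:MBp} alone (an unweighted statement about $M_{\Phi}$) does not yield it. Since the paper itself only cites the literature for the whole lemma, this omission is minor, but as a self-contained proof your proposal covers the strong-type inequality and the sufficiency in the ``moreover'' clause, not the necessity.
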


The two-weight inequality above was established in \cite[Theorem~5.14]{CMP11} and \cite[Theorem~3.1]{CP99}.  The weak type inequality for $M_{\Phi}$ was also obtained in \cite[Proposition~5.16]{CMP11} as follows.

\begin{lemma}\label{lem:Muv-weak}
Let $1<p<\infty$, let $B$ and $\Phi$ be Young functions such that $t^{\frac1p} B^{-1}(t) \lesssim \Phi^{-1}(t)$ for any $t>t_0>0$. If a pair of weights $(u, v)$ satisfies $[u, v]_{p, B}<\infty$, then
\begin{align}\label{eq:MPuv}
\|M_{\Phi}f\|_{L^{p,\infty}(u)} \leq C \|f\|_{L^p(v)}.
\end{align}
Moreover, \eqref{eq:MPuv} holds for $M$ if and only if $[u, v]_{A_p}<\infty$.
\end{lemma}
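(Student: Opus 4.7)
The plan is to run a weighted Calder\'on--Zygmund stopping-time argument on the level set $\{M_\Phi f > \lambda\}$, powered by the generalized H\"older inequality \eqref{eq:Holder-ABC}. Fix $\lambda>0$. By the standard reduction to one of the $3^n$ shifted dyadic grids it suffices to control $u(\{M_\Phi^{\mathcal D} f > \lambda\})$, where $M_\Phi^{\mathcal D}$ denotes the dyadic analogue of $M_\Phi$ relative to a fixed grid $\mathcal D$. I would then select the collection $\{Q_j\} \subset \mathcal D$ of maximal dyadic cubes with $\|f\|_{\Phi, Q_j} > \lambda$; these cubes are pairwise disjoint and their union equals the dyadic level set.

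The hypothesis $t^{1/p} B^{-1}(t) \lesssim \Phi^{-1}(t)$ is precisely $A_0^{-1}(t) B^{-1}(t) \lesssim \Phi^{-1}(t)$ with $A_0(t) := t^p$, so \eqref{eq:Holder-ABC} applies with target $\Phi$. Factoring $f = (f v^{1/p}) \cdot v^{-1/p}$ on each $Q_j$ yields
\[
\lambda < \|f\|_{\Phi, Q_j} \lesssim \|fv^{1/p}\|_{p, Q_j} \, \|v^{-1/p}\|_{B, Q_j}.
\]
Raising to the $p$-th power and multiplying by $u(Q_j)$ produces
\[
\lambda^p \, u(Q_j) \lesssim \|u^{1/p}\|_{p, Q_j}^{p} \|v^{-1/p}\|_{B, Q_j}^{p} \int_{Q_j} |f|^p v \, dx \leq [u, v]_{p, B}^{p} \int_{Q_j} |f|^p v \, dx,
\]
and summation over the disjoint family $\{Q_j\}$ delivers the desired weak-type estimate \eqref{eq:MPuv}.

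For the Hardy--Littlewood maximal operator, one specializes $\Phi(t) = t$; the hypothesis $t^{1/p} B^{-1}(t) \lesssim t$ forces $B(t) \gtrsim t^{p'}$, and the borderline choice $B(t) = t^{p'}$ specializes $[u, v]_{p, B}$ to $[u, v]_{A_p}$, so sufficiency follows from the preceding argument. For necessity I would test the weak-type inequality against $f_Q := v^{1-p'} \mathbf{1}_Q$: since $Mf_Q(x) \geq \fint_Q v^{1-p'}$ for $x \in Q$, choosing $\lambda$ slightly less than this average forces
\[
u(Q) \bigg(\fint_Q v^{1-p'} \, dy\bigg)^{\!p} \lesssim \int_Q v^{1-p'} \, dx,
\]
which rearranges to the two-weight $A_p$ condition. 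The only mildly subtle point in the whole plan is the initial passage from the continuous maximal operator to the dyadic one; everything else is essentially mechanical once \eqref{eq:Holder-ABC} is invoked.
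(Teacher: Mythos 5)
Your proposal is correct, and in fact the paper never proves this lemma at all: it is quoted verbatim from \cite[Proposition~5.16]{CMP11}, so there is no internal argument to compare against. What you write is essentially the standard proof from that reference: reduce $M_\Phi$ to the $3^n$ shifted dyadic maximal operators, run a stopping-time selection of maximal dyadic cubes with $\|f\|_{\Phi,Q_j}>\lambda$, and apply the generalized H\"older inequality \eqref{eq:Holder-ABC} with $A(t)=t^p$, which is exactly what the hypothesis $t^{1/p}B^{-1}(t)\lesssim \Phi^{-1}(t)$ permits; the resulting bound $\lambda^p u(Q_j)\lesssim [u,v]_{p,B}^{p}\int_{Q_j}|f|^p v\,dx$ and the disjointness of the $\{Q_j\}$ give \eqref{eq:MPuv}, and your treatment of the case of $M$ (sufficiency via $B(t)=t^{p'}$, necessity by testing against $v^{1-p'}\mathbf{1}_Q$) is the classical two-weight argument of Muckenhoupt type. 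Three minor points you should make explicit if this were written out in full: (i) the existence of \emph{maximal} dyadic cubes with $\|f\|_{\Phi,Q}>\lambda$ requires a qualitative assumption (say $f$ bounded with compact support, so that $\|f\|_{\Phi,Q}\to 0$ as $\ell(Q)\to\infty$), followed by a routine limiting argument; (ii) in the necessity part the test function $v^{1-p'}\mathbf{1}_Q$ may fail to lie in $L^p(v)$ when $\int_Q v^{1-p'}=\infty$, so one should use the truncations $\min(v^{1-p'},N)\mathbf{1}_Q$ and let $N\to\infty$; (iii) $\Phi(t)=t$ is not literally a Young function under the paper's definition, but for $M$ the generalized H\"older inequality degenerates to the ordinary one and the stopping-time argument is the classical Calder\'on--Zygmund one, so nothing breaks. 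None of these affects the substance: the proof is sound.
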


\section{Proof of main results}

\subsection{Sparse domination}
Let $\Phi$ be a radial Schwartz function such that $\mathbf{1}_{B(0, 1)} \le \Phi \le \mathbf{1}_{B(0, 2)}$. We define
\begin{align*}
\widetilde{S}_{\alpha,L}(f)(x):=\bigg(\int_{0}^{\infty} \int_{\Rn}
\Phi\Big(\frac{|x-y|}{\alpha t}\Big) |Q_{t,L}f(y)|^2\frac{dydt}{t^{n+1}}\bigg)^{1/2},
\end{align*}
where $Q_{t,L}f:=t^m L e^{-t^m L}f$. It is easy to verify that
\begin{align}\label{eq:SSS}
S_{\alpha,L}(f)(x) \le \widetilde{S}_{\alpha,L}(f)(x) \le S_{2\alpha,L}(f)(x),\quad x \in \Rn.
\end{align}
Additionally, it was proved in \cite{ADM} that $S_{1,L}$ is bounded from $L^1(\mathbb{R}^n)$ to $L^{1,\infty}(\mathbb{R}^n)$. Then, this and \eqref{eq:SSS} give that
\begin{align}\label{eq:S11}
\|\widetilde{S}_{\alpha,L}(f)\|_{L^{1,\infty}(\mathbb{R}^n)} \lesssim \alpha^n \|f\|_{L^1(\mathbb{R}^n)}.
\end{align}

Using these facts, we can establish the sparse domination for $S_{\alpha,L}$ as follows.
\begin{lemma}\label{lem:S-sparse}
For any $\alpha \geq 1$, we have
\begin{align}\label{eq:S-sparse}
S_{\alpha,L}(f)(x) &\lesssim \alpha^{n} \sum_{j=1}^{3^n} \A_{\S_j}^2 (f)(x) ,\quad \text{a.e. } x \in \Rn,
\end{align}
where
\[
\A_{S}^2(f)(x):= \bigg(\sum_{Q \in \S} \langle |f| \rangle_Q^2 \mathbf{1}_Q(x)\bigg)^{\frac12}.
\]
\end{lemma}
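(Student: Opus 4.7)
The plan is to apply the Hytönen local mean oscillation formula (Lemma~\ref{lem:mf}) to $S_{\alpha,L}(f)^{2}$ and then bound the local oscillation on each cube by a multiple of $\alpha^{2n}\langle|f|\rangle_{cQ}^{2}$. Because $\mathcal{A}^{2}_{\S_{j}}$ involves squared averages, working with $S_{\alpha,L}(f)^{2}$ (rather than $S_{\alpha,L}(f)$) is what produces the right output. The three-grid theorem is then used to absorb the inflated cubes $cQ$ into sparse families living in $3^{n}$ translated dyadic grids.

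Concretely, I would fix a dyadic grid $\mathcal{D}$ and a large cube $Q_{0}\in\mathcal{D}$, and apply Lemma~\ref{lem:mf} to $g:=S_{\alpha,L}(f)^{2}\mathbf{1}_{Q_{0}}$ to obtain
\begin{equation*}
S_{\alpha,L}(f)(x)^{2} - m_{g}(Q_{0}) \;\le\; 2\sum_{Q\in\S(Q_{0})} \omega_{2^{-n-2}}\!\bigl(S_{\alpha,L}(f)^{2};Q\bigr)\mathbf{1}_{Q}(x).
\end{equation*}
The weak-$(1,1)$ estimate \eqref{eq:S11}, together with $S_{\alpha,L}\le\widetilde{S}_{\alpha,L}$, places $S_{\alpha,L}(f)^{2}$ in weak-$L^{1/2}$, which lets me send $\ell(Q_{0})\to\infty$ so that $m_{g}(Q_{0})\to 0$; the increasing union of the sparse families $\S(Q_{0})$ (with the standard rearrangement) yields one countable sparse family $\S^{*}\subset\mathcal{D}$. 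The three-grid trick then redistributes the cubes $cQ$ (for a fixed $c$ depending only on $n$) into $3^{n}$ sparse families $\S_{j}$, giving the stated form $\sum_{j=1}^{3^{n}}\mathcal{A}^{2}_{\S_{j}}(f)$.

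The core of the proof is thus the pointwise oscillation bound
\begin{equation*}
\omega_{\lambda}\!\bigl(S_{\alpha,L}(f)^{2};Q\bigr) \;\lesssim\; \alpha^{2n}\,\langle|f|\rangle_{cQ}^{2}
\end{equation*}
for any dyadic cube $Q$, with $\lambda=2^{-n-2}$. Split $f=f_{1}+f_{2}$ with $f_{1}=f\mathbf{1}_{cQ}$. By the reverse triangle inequality applied inside the $L^{2}$-norm defining $S_{\alpha,L}$,
\begin{equation*}
\bigl|\,S_{\alpha,L}(f)(x)^{2}-S_{\alpha,L}(f_{2})(x)^{2}\,\bigr| \;\le\; S_{\alpha,L}(f_{1})(x)^{2} + 2\,S_{\alpha,L}(f_{1})(x)\,S_{\alpha,L}(f_{2})(x),
\end{equation*}
so using $S_{\alpha,L}(f_{2})(x_{Q})^{2}$ as the centering constant gives
\begin{equation*}
\omega_{\lambda}\bigl(S_{\alpha,L}(f)^{2};Q\bigr) \;\le\; \bigl(S_{\alpha,L}(f_{1})^{2}\mathbf{1}_{Q}\bigr)^{*}\!(\tfrac{\lambda}{3}|Q|) + \text{cross term} + \text{oscillation of } S_{\alpha,L}(f_{2})^{2} \text{ on } Q.
\end{equation*}
The first term is handled by Kolmogorov's inequality applied to the weak-$(1,1)$ bound \eqref{eq:S11}: $(S_{\alpha,L}(f_{1})\mathbf{1}_{Q})^{*}(\lambda|Q|)\lesssim\alpha^{n}\|f_{1}\|_{1}/(\lambda|Q|)\lesssim \alpha^{n}\langle|f|\rangle_{cQ}$, and squaring gives the desired $\alpha^{2n}\langle|f|\rangle_{cQ}^{2}$. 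The cross term is controlled by $2ab\le a^{2}+b^{2}$ and the same Kolmogorov estimate.

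The main obstacle is the remaining piece: proving that $S_{\alpha,L}(f_{2})$ is essentially constant on $Q$, with oscillation $\lesssim \alpha^{n}\langle|f|\rangle_{cQ}$. This requires genuinely exploiting the Gaussian bound (A2). For $x\in Q$ and $(y,t)\in\Gamma_{\alpha}(x)$, the kernel $p_{t^{m}}(y,z)$ contributes only from $z\in(cQ)^{c}$, so $|z-y|\gtrsim \ell(Q)$ when $t\lesssim\ell(Q)/\alpha$, producing Gaussian decay of the form $\exp(-(\ell(Q)/t^{1/m})^{m/(m-1)})$; for large $t\gtrsim\ell(Q)$ one instead exploits that the $y$-average over the tent varies slowly in $x\in Q$ because the kernels $p_{t^m}(\cdot,z)$ are $(\ell(Q)/t^{1/m})$-regular. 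The task is to split the $t$-integral dyadically about $\ell(Q)$ and sum geometric series with the right powers of $\alpha$ arising from the cone volume $\alpha^{n}t^{n}$, so that the combined bound is $\alpha^{n}\langle|f|\rangle_{cQ}$; squaring this and multiplying by the previously-obtained $\alpha^{n}\langle|f|\rangle_{cQ}$ bound on $S_{\alpha,L}(f_{1})$ to control the cross term yields the announced $\alpha^{2n}\langle|f|\rangle_{cQ}^{2}$. This is where most of the technical work sits.
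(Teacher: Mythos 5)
Your overall architecture (Hyt\"onen's formula applied to the square of the operator, Kolmogorov plus the weak $(1,1)$ bound \eqref{eq:S11} for the local part, then the three-grid absorption) matches the paper's, but the core estimate you build the proof on is not true. You claim the single-dilate oscillation bound $\omega_{\lambda}\big(S_{\alpha,L}(f)^2;Q\big)\lesssim \alpha^{2n}\langle|f|\rangle_{cQ}^2$, resting on the assertion that $S_{\alpha,L}(f_2)$ (with $f_2=f\mathbf{1}_{(cQ)^c}$) oscillates on $Q$ by at most $\alpha^{n}\langle|f|\rangle_{cQ}$. The semigroup is non-local: if $f$ is supported in an annulus $2^{j}Q\setminus 2^{j-1}Q$ with $j$ large, then $\langle|f|\rangle_{cQ}=0$, yet $t^mLe^{-t^mL}f$ is of full size on $Q$ at times $t\sim 2^{j}\ell(Q)$ (the Gaussian factor is $O(1)$ there), so the oscillation of $S_{\alpha,L}(f)^2$ on $Q$ does not vanish. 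The correct estimate necessarily carries the whole tail, which is exactly what the paper imports from \cite[Proposition~3.2]{BD}, namely \eqref{eq:osc-SL}: $\omega_{\lambda}\lesssim \alpha^{2n}\sum_{j\ge0}2^{-j\delta}\big(\fint_{2^jQ}|f|\,dx\big)^2$. Consequently the absorption step cannot be the "fixed dilate $cQ$" version you describe; one needs the operators $\mathcal{T}_{\S,j}$ indexed by all dilates $2^jQ$ with geometrically decaying coefficients, and it is the Lerner--Nazarov argument \cite[Sections~11--13]{LN} that converts that whole sum into $3^n$ sparse forms $\A^2_{\S_j}$. Your cross term suffers from the same defect, since $S_{\alpha,L}(f_2)$ on $Q$ is likewise not controlled by $\langle|f|\rangle_{cQ}$.

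A second, related problem: your plan for the far part invokes regularity of the kernels, "$p_{t^m}(\cdot,z)$ are $(\ell(Q)/t^{1/m})$-regular," but hypothesis (A2) is only an upper Gaussian bound; no H\"older or Lipschitz continuity of $p_t(x,y)$ in $x$ is assumed, so this step has nothing to stand on. The paper avoids it by never estimating the rough-cone operator directly: it sandwiches $S_{\alpha,L}$ between smooth versions via \eqref{eq:SSS} and runs the whole argument for $\widetilde{S}_{\alpha,L}$, where the needed $x$-regularity comes from the smooth cutoff $\Phi\big(\tfrac{|x-y|}{\alpha t}\big)$ rather than from the kernel (this is also how \eqref{kernelestimate} enters, only as a size bound). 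Applying Lemma \ref{lem:mf} directly to $S_{\alpha,L}(f)^2$, as you propose, forces you to estimate oscillations of a functional defined by sharp cones, which is exactly the technical difficulty the smooth truncation is designed to remove. (Your idea of letting $\ell(Q_0)\to\infty$ to kill the median is a legitimate variant of the paper's handling of $m_{\widetilde S_{\alpha,L}(f)^2}(Q_0)$ via \eqref{eq:mfSL}, but it does not repair the two issues above.)
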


\begin{proof}
Fix $Q_0\in \D$. By \eqref{eq:mfQ}, Kolmogorov's inequality and \eqref{eq:S11}, we have
\begin{align}\label{eq:mfSL}
|m_{\widetilde{S}_{\alpha,L}(f)^2}(Q_0)|
&\lesssim \bigg(\fint_{Q_0} |\widetilde{S}_{\alpha,L}(f \mathbf{1}_{Q_0})|^{\frac12} \, dx\bigg)^4
\nonumber\\
&\lesssim  \|\widetilde{S}_{\alpha,L}(f \mathbf{1}_{Q_0})\|^2_{L^{1,\infty}(Q_0, \frac{dx}{|Q_0|})}
\lesssim \alpha^{2n}  \bigg(\fint_{Q_0} |f| \, dx\bigg)^2.
\end{align}
From \cite[Proposition~3.2]{BD}, we obtain that for any dyadic cube $Q\subset \Rn$, $\alpha\geq 1$ and $\lambda \in (0, 1)$,
\begin{equation}\label{eq:osc-SL}
\omega_{\lambda}(\widetilde{S}_{\alpha, L}(f)^2;Q)
\lesssim \alpha^{2n} \sum_{j=0}^\infty 2^{-j\delta} \bigg(\fint_{2^j Q} |f|\, dx\bigg)^2,
\end{equation}
where $\delta \in (0, 1)$ is some constant. Invoking Lemma \ref{lem:mf}, \eqref{eq:mfSL} and \eqref{eq:osc-SL}, one can pick a sparse family $\S(Q_0) \subset \D(Q_0)$ so that
\begin{align}
\widetilde{S}_{\alpha,L}(f)(x)^2
& \lesssim |m_{\widetilde{S}_{\alpha,L}(f)^2}(Q_0)| +
\sum_{Q \in \S(Q_0)}\omega_{\varepsilon}(\widetilde{S}_{\alpha, L}(f)^2;Q) \mathbf{1}_{Q}(x)
\nonumber\\
&\lesssim \alpha^{2n} \sum_{Q\in \S(Q_0)}\sum_{j=0}^\infty 2^{-j\delta}\langle |f|\rangle_{2^jQ}^2 \mathbf{1}_{Q}(x)
\label{eq:SLQj} \\
&=: \alpha^{2n} \sum_{j=0}^\infty 2^{-j\delta} \mathcal{T}^2_{\S(Q_0), j}(f)(x)^2, \quad\text{ a.e. } x\in Q_0. \label{eq:SLTS}
\end{align}
where
\begin{align*}
\mathcal{T}^2_{\S,j}(f)(x)
&:=\bigg(\sum_{Q\in \S} \langle |f|\rangle_{2^jQ} \mathbf{1}_{Q}(x)\bigg)^{\frac12}.
\end{align*}
Denote
\begin{align*}
\mathcal{T}_{\S,j}(f, g)(x)
&:=\sum_{Q\in \S} \langle |f|\rangle_{2^jQ} \langle |g|\rangle_{2^jQ} \mathbf{1}_{Q}(x),
\\
\mathcal{A}_{\S}(f, g)(x)
&:=\sum_{Q\in \S} \langle |f|\rangle_{Q} \langle |g|\rangle_{2Q} \mathbf{1}_{Q}(x).
\end{align*}
Then, $\mathcal{T}^2_{\S,j}(f)(x)=\mathcal{T}_{\S,j}(f, f)(x)^{\frac12}$. On the other hand, the arguments in \cite[Sections~11-13]{LN} shows that there exist $3^n$ dyadic grids $S_j\in \D_j, j=1,\ldots,3^n$, such that
\begin{align}\label{eq:TAA}
\sum_{j=0}^\infty 2^{-j\delta} \mathcal{T}^1_{\S(Q_0), j}(f,f)(x)
\lesssim  \sum_{j=1}^{3^n} \A_{\S_j}(f, f)(x)
= \sum_{j=1}^{3^n} \A^2_{\S_j}(f)(x)^2.
\end{align}
Gathering \eqref{eq:SLTS} and \eqref{eq:TAA}, we deduce that
\begin{align*}
\widetilde{S}_{\alpha,L}(f)(x)\lesssim \alpha^{n} \sum_{j=1}^{3^n}\A^2_{\S_j}(f)(x), \quad\text{ a.e. } x\in Q_0.
\end{align*}
Since $\Rn=\bigcup_{Q \in \D} Q$, it leads that
\begin{align*}
S_{\alpha,L}(f)(x) \le \widetilde{S}_{\alpha,L}(f)(x)\lesssim \alpha^{n} \sum_{j=1}^{3^n}\A^2_{\S_j}(f)(x), \quad\text{ a.e. } x\in \Rn.
\end{align*}
This completes our proof.
\end{proof}

\subsection{Bump conjectures}
In this subsection, we are going to show two-weight inequalities invoking bump conjectures.

\begin{proof}[\textbf{Proof of Theorem \ref{thm:Suv}.}]
By Lemma \ref{lem:S-sparse}, the inequality \eqref{eq:SLp} follows from the following
\begin{align}\label{eq:ASLp}
\|\A_{\mathcal{S}}^2(f)\|_{L^p(u)} \lesssim \mathscr{N}_p \|f\|_{L^p(v)},
\end{align}
for every sparse family $\mathcal{S}$, where the implicit constant does not depend on $\mathcal{S}$.

To prove \eqref{eq:ASLp}, we begin with the case $1<p \le 2$. Actually, the H\"{o}lder's inequality \eqref{eq:Holder-AA} gives that
\begin{align}\label{eq:ASp1}
\|\A_{\mathcal{S}}^2(f)\|_{L^p(u)}^p
&=\int_{X} \bigg(\sum_{Q \in \mathcal{S}} \langle f \rangle_Q^2 \mathbf{1}_{Q}(x)\bigg)^{\frac{p}{2}} u(x) dx
\le \sum_{Q \in \mathcal{S}} \langle |f| \rangle_Q^p \int_Q u(x)dx
\nonumber \\
&\lesssim \sum_{Q \in \mathcal{S}} \|f v^{\frac1p}\|_{\bar{B}, Q}^p \|v^{-\frac1p}\|_{B, Q}^p
\|u^{\frac1p}\|_{p, Q}^p |Q|
\nonumber \\
&\lesssim ||(u, v)||_{A,B,p}^p \sum_{Q \in \mathcal{S}} \left(\inf_{Q} M_{\bar{B}}(f v^{\frac1p})\right)^p |E_Q|
\nonumber \\
&\le ||(u, v)||_{A,B,p}^p \int_{X} M_{\bar{B}}(f v^{\frac1p})(x)^p dx
\nonumber \\
&\le ||(u, v)||_{A,B,p}^p \|M_{\bar{B}}\|_{L^p}^p \|f\|_{L^p(v)}^p,
\end{align}
where Lemma \ref{lem:MBp} is used in the last step.

Next let us deal with the case $2<p<\infty$. By duality, one has
\begin{align}\label{eq:AS-dual}
\|\A_{\mathcal{S}}^2(f)\|_{L^p(u)}^2 = \|\A_{\mathcal{S}}^2(f)^2\|_{L^{p/2}(u)}
=\sup_{\substack{0 \le h \in L^{(p/2)'}(u) \\ \|h\|_{L^{(p/2)'}(u)=1}}} \int_{\Rn} \A_{\mathcal{S}}^2(f)^2 h u\, dx.
\end{align}
Fix a nonnegative function $h \in L^{(p/2)'}(u)$ with $\|h\|_{L^{(p/2)'}(u)}=1$. Then using H\"{o}lder's inequality \eqref{eq:Holder-AA} three times and Lemma \ref{lem:MBp}, we obtain
\begin{align}\label{eq:ASp2}
&\int_{X} \A_{\mathcal{S}}^2(f)(x)^2 h(x) u(x) dx
\lesssim \sum_{Q \in \mathcal{S}} \langle |f| \rangle_{Q}^2 \langle hu \rangle_{Q} |Q|
\nonumber \\
&\lesssim \sum_{Q \in \mathcal{S}} \|f v^{\frac1p}\|_{\bar{B}, Q}^2 \|v^{-\frac1p}\|_{B, Q}^2
\|hu^{1-\frac{2}{p}}\|_{\bar{A}, Q} \|u^{\frac{2}{p}}\|_{A, Q} |Q|
\nonumber \\
&\lesssim ||(u, v)||_{A,B,p}^2 \sum_{Q \in \mathcal{S}} \left(\inf_{Q} M_{\bar{B}}(f v^{\frac1p})\right)^2
\left(\inf_{Q} M_{\bar{A}}(hu^{1-\frac{2}{p}})\right) |E_Q|
\nonumber \\
&\le ||(u, v)||_{A,B,p}^2 \int_{X} M_{\bar{B}}(f v^{\frac1p})(x)^2 M_{\bar{A}}(hu^{1-\frac{2}{p}})(x) dx
\nonumber \\
&\le ||(u, v)||_{A,B,p}^2 \|M_{\bar{B}}(f v^{\frac1p})^2\|_{L^{p/2}}
\|M_{\bar{A}}(hu^{1-\frac{2}{p}})\|_{L^{(p/2)'}}
\nonumber \\
&\le ||(u, v)||_{A,B,p}^2 \|M_{\bar{B}}\|_{L^p}^2
\|M_{\bar{A}}\|_{L^{(p/2)'}} \|f\|_{L^p( v)}^2 \|h\|_{L^{(p/2)'}(u)}.
\end{align}
Therefore, \eqref{eq:ASLp} immediately follows from \eqref{eq:ASp1}, \eqref{eq:AS-dual} and \eqref{eq:ASp2}.
\end{proof}

Let us present an endpoint extrapolation theorem from \cite[Corollary~8.4]{CMP11}.
\begin{lemma}
Let $\mathcal{F}$ be a collection of pairs $(f, g)$ of nonnegative measurable functions. If for every weight $w$,
\begin{align*}
\|f\|_{L^{1,\infty}(w)} \le C \|g\|_{L^1(Mw)}, \quad (f, g) \in \mathcal{F},
\end{align*}
then for all $p \in (1, \infty)$,
\begin{align*}
\|f\|_{L^{p,\infty}(u)} \le C \|g\|_{L^p(v)}, \quad (f, g) \in \mathcal{F},
\end{align*}
whenever $\sup_{B} \|u^{\frac1p}\|_{A, B} \|v^{-\frac1p}\|_{p', B}<\infty$, where $\bar{A} \in B_{p'}$.
\end{lemma}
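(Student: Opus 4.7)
The strategy is Rubio de Francia's extrapolation tuned to the endpoint hypothesis. Write $E_{\lambda} := \{f > \lambda\}$. Since $\|f\|_{L^{p,\infty}(u)} = \sup_{\lambda > 0} \lambda\, u(E_{\lambda})^{1/p}$, it suffices to prove, for each $\lambda > 0$,
$$\lambda\, u(E_{\lambda})^{1/p} \le C \|g\|_{L^{p}(v)}.$$
Write $u(E_{\lambda})^{1/p} = \|\mathbf{1}_{E_{\lambda}} u^{1/p}\|_{L^{p}(dx)}$ and dualize on unweighted $L^{p}$: pick $\phi \ge 0$ with $\|\phi\|_{L^{p'}(dx)} \le 1$ such that $u(E_{\lambda})^{1/p} \le 2 \int_{E_{\lambda}} u^{1/p} \phi\, dx$.

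Since $\bar{A} \in B_{p'}$, Lemma \ref{lem:MBp} gives $K := \|M_{\bar{A}}\|_{L^{p'}\to L^{p'}} < \infty$. The Rubio de Francia iteration
$$\mathcal{R}\phi := \sum_{k=0}^{\infty} \frac{M_{\bar{A}}^{k}\phi}{(2K)^{k}}$$
then satisfies $\phi \le \mathcal{R}\phi$, $\|\mathcal{R}\phi\|_{L^{p'}(dx)} \le 2$, and crucially $M_{\bar{A}}(\mathcal{R}\phi) \le 2K\, \mathcal{R}\phi$. Define the weight $w := u^{1/p} \mathcal{R}\phi$, so $w(E_{\lambda}) \ge \tfrac{1}{2} u(E_{\lambda})^{1/p}$. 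Applying the hypothesis with this $w$:
$$\lambda\, u(E_{\lambda})^{1/p} \le 2 \lambda\, w(E_{\lambda}) \le 2 \|f\|_{L^{1,\infty}(w)} \le 2 C \int g \cdot Mw \, dx.$$

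It remains to control $\int g \cdot Mw\, dx$ by $C \|g\|_{L^{p}(v)}$. For any cube $Q$ and $x \in Q$, the generalized H\"older inequality \eqref{eq:Holder-AA} with the pair $(A, \bar{A})$, combined with the $A_{1}$-like property $\|\mathcal{R}\phi\|_{\bar{A}, Q} \le M_{\bar{A}}(\mathcal{R}\phi)(x) \le 2K\,\mathcal{R}\phi(x)$, yields
$$\fint_{Q} u^{1/p} \mathcal{R}\phi\, dy \le 2\|u^{1/p}\|_{A, Q}\|\mathcal{R}\phi\|_{\bar{A}, Q} \le 4K\, \|u^{1/p}\|_{A, Q}\, \mathcal{R}\phi(x).$$
Taking the supremum over $Q \ni x$ gives the pointwise bound
$$Mw(x) \le 4K\, M_{A}(u^{1/p})(x)\, \mathcal{R}\phi(x).$$
Finally, by H\"older's inequality on the pair $L^{p}(v)$--$L^{p'}(v^{1-p'})$ followed by the two-weight Orlicz maximal bound of Lemmas \ref{lem:M-uv}--\ref{lem:Muv-weak} (extracting, via the separated bump condition $\|u^{1/p}\|_{A, Q}\|v^{-1/p}\|_{p', Q} \le [u,v]_{A, p'}$, a uniform estimate for $M_{A}(u^{1/p})\cdot\mathcal{R}\phi$ in $L^{p'}(v^{1-p'})$ in terms of $\|\mathcal{R}\phi\|_{L^{p'}(dx)} \le 2$), one closes the argument to obtain $\int g \cdot Mw\, dx \lesssim \|g\|_{L^{p}(v)}$.

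The main technical obstacle I anticipate is this last closure: the bump hypothesis provides only an averaged comparison between $u^{1/p}$ and $v^{-1/p}$, not a pointwise one, so converting the pointwise control $Mw \lesssim M_{A}(u^{1/p})\mathcal{R}\phi$ into a usable $L^{p'}(v^{1-p'})$ bound requires a genuine two-weight Orlicz maximal estimate. The choice of iterating $M_{\bar{A}}$ (rather than $M$ itself or some other operator) is dictated precisely by the need to interface the Orlicz H\"older step with the condition $\bar{A} \in B_{p'}$, and to make the closing two-weight inequality compatible with the structure of the separated bump.
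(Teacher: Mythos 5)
Your reduction is fine up to the pointwise bound $Mw \le 4K\, M_A(u^{1/p})\,\mathcal{R}\phi$, but the closing step is a genuine gap. After your H\"older step what you must prove is
\begin{equation*}
\big\| v^{-\frac1p}\, M_A(u^{\frac1p})\, \mathcal{R}\phi \big\|_{L^{p'}(dx)}
=\big\| M_A(u^{\frac1p})\, \mathcal{R}\phi \big\|_{L^{p'}(v^{1-p'})} \lesssim \|\mathcal{R}\phi\|_{L^{p'}(dx)},
\end{equation*}
and Lemmas \ref{lem:M-uv} and \ref{lem:Muv-weak} do not give this: they bound a maximal operator applied to an arbitrary function, not the multiplication operator $\psi \mapsto v^{-1/p} M_A(u^{1/p})\psi$. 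The separated bump hypothesis couples $\|u^{1/p}\|_{A,Q}$ and $\|v^{-1/p}\|_{p',Q}$ only when both are averaged over the \emph{same} cube; once you factor $Mw$ pointwise into $M_A(u^{1/p})(x)\cdot\mathcal{R}\phi(x)$ that coupling is destroyed, and a bound of the displayed type essentially asks for pointwise control of $v^{-1/p}(x)M_A(u^{1/p})(x)$, which the bump condition does not provide. The $M_{\bar A}$--$A_1$ property of $\mathcal{R}\phi$ cannot rescue this, since $\mathcal{R}\phi$ is built without any reference to $v$. You flagged this yourself as the unresolved obstacle, so as written the proof does not close.

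The repair is to not factor $Mw$ at all (and then the Rubio de Francia iteration becomes unnecessary): keep $w=u^{1/p}\phi$ and estimate $\|v^{-1/p}Mw\|_{L^{p'}(dx)}=\|M(u^{1/p}\phi)\|_{L^{p'}(v^{1-p'})}$ by Lemma \ref{lem:M-uv} applied at the exponent $p'$ to the dual pair $(\tilde u,\tilde v)=(v^{1-p'},u^{1-p'})$. Since $\tilde u^{1/p'}=v^{-1/p}$ and $\tilde v^{-1/p'}=u^{1/p}$, the hypothesis $[\tilde u,\tilde v]_{p',A}=\sup_Q\|v^{-1/p}\|_{p',Q}\|u^{1/p}\|_{A,Q}<\infty$ with $\bar A\in B_{p'}$ is exactly the assumed bump condition, so $M:L^{p'}(u^{1-p'})\to L^{p'}(v^{1-p'})$. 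Then
\begin{equation*}
\lambda\, u(E_\lambda)^{\frac1p}\le \lambda\, w(E_\lambda)\le \|f\|_{L^{1,\infty}(w)}\le C\int g\, Mw\, dx
\le C\|g\|_{L^p(v)}\,\|M(u^{\frac1p}\phi)\|_{L^{p'}(v^{1-p'})}
\lesssim \|g\|_{L^p(v)}\,\|u^{\frac1p}\phi\|_{L^{p'}(u^{1-p'})}=\|g\|_{L^p(v)}\|\phi\|_{L^{p'}(dx)},
\end{equation*}
which finishes the proof. Note the paper itself offers no proof of this lemma, quoting it from \cite[Corollary~8.4]{CMP11}; the duality-plus-two-weight-maximal argument just sketched is the standard one there (going back to \cite{CP99}), and your proposal differs from it precisely at the step where it breaks down.
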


\begin{proof} [\textbf{Proof of Theorem \ref{thm:Sweak}.}]
In view of, it suffices to prove that for every weight $w$,
\begin{align*}
\|S_{\alpha, L}(f)\|_{L^{1,\infty}(w)} \leq C \|f\|_{L^1(Mw)},
\end{align*}
where the constant $C$ is independent of $w$ and $f$. We should mention that although the norm of weights does not appear in \cite[Corollary~8.4]{CMP11}, one can check its proof to obtain the norm constant in \eqref{eq:S-weak}.  Invoking Lemma \ref{lem:S-sparse}, we are reduced to showing that there exists a constant $C$ such that for every sparse family $\S$ and for every weight $w$,
\begin{align}\label{eq:S-11}
\|\A_{\S}^2(f)\|_{L^{1,\infty}(w)} \leq C \|f\|_{L^1(M_{\D}w)},
\end{align}

Without loss of generality, we may assume that $f$ is bounded and has compact support. Fix $\lambda>0$ and denote $\Omega:=\{x \in \Rn: M_{\D}f(x)>\lambda\}$. By the Calder\'{o}n-Zygmund decomposition, there exists a pairwise disjoint family $\{Q_j\} \subset \D$ such that $\Omega=\bigcup_{j}Q_j$ and
\begin{list}{\textup{(\theenumi)}}{\usecounter{enumi}\leftmargin=1cm \labelwidth=1cm \itemsep=0.2cm
			\topsep=.2cm \renewcommand{\theenumi}{\arabic{enumi}}}
\item\label{CZ-1} $f=g+b$,
\item\label{CZ-2} $g=f\mathbf{1}_{\Omega}+\sum_{j} \langle f \rangle_{Q_j} \mathbf{1}_{Q_j}$,
\item\label{CZ-3} $b=\sum_{j}b_j$ with $b_j=(f-\langle f \rangle_{Q_j}) \mathbf{1}_{Q_j}$,
\item\label{CZ-4} $\langle |f| \rangle_{Q_j}>\lambda$ and $|g(x)| \le 2^n \lambda$, a.e. $x \in \Rn$,
\item\label{CZ-5} $\supp(b_j) \subset Q_j$ and $\fint_{Q_j} b_j\, dx=0$.
\end{list}
Then by \eqref{CZ-1}, we split
\begin{align}\label{eq:IgIb}
&w(\{x \in \Rn: \A_{\S}^2(f)(x)>\lambda\})
\le w(\Omega) + {\rm I_g} + {\rm I_b},
\end{align}
where
\begin{align*}
{\rm I_g} = w(\{x \in \Omega^c: \A_{\S}^2(g)(x)>\lambda/2\}) \quad\text{and}\quad
{\rm I_b} =w(\{x \in \Omega^c: \A_{\S}^2(b)(x)>\lambda/2\})
\end{align*}
For the first term, we by \eqref{CZ-4} have
\begin{align}\label{eq:wo}
w(\Omega) &\le \sum_{j} w(Q_j) \le \frac{1}{\lambda} \sum_{j} \frac{w(Q_j)}{|Q_j|} \int_{Q_j} |f(x)| dx
\nonumber \\
&\le \frac{1}{\lambda} \sum_{j} \int_{Q_j} |f(x)| M_{\D}w(x) dx
\le \frac{1}{\lambda} \int_{\Rn} |f(x)| M_{\D}w(x) dx.
\end{align}

To estimate ${\rm I_b}$, we claim that $\A_{\S}^2(b_j)(x)=0$ for all $x \in \Omega^c$ and for all $j$. In fact, if there exist $x_0 \in \Omega^c$ and $j_0$ such that $\A_{\S}^2(b_{j_0})(x_0) \neq 0$, then there is a dyadic cube $Q_0 \in \S$ such that $x_0 \in Q_0$ and $\langle b_{j_0}\rangle_{Q_0} \neq 0$. The latter implies $Q_0 \subsetneq Q_{j_0}$ because of the support and the vanishing property of $b_{j_0}$. This in turn gives that $x_0 \in Q_{j_0}$, which contradicts $x_0 \in \Omega^c$. This shows our claim. As a consequence, the set $\{x \in \Omega^c: \A_{\S}^2(b)(x)>\lambda/2\}$ is empty, and hence ${\rm I_b}=0$.

In order to control ${\rm I_g}$, we first present a Fefferman-Stein inequality for $\A_{\S}^2$. Note that $v(x):=M_{\D}w(x) \ge \langle w \rangle_{Q}$ for any dyadic cube $Q \in \S$ containing $x$. Then for any Young function $A$ such that $\bar{A} \in B_2$,
\begin{align}\label{eq:AS-FS}
\|\A_{\S}^2(f)\|_{L^2(w)}^2
&=\sum_{Q \in \S} \langle |f| \rangle_{Q}^2 w(Q)
\le \sum_{Q \in \S} \|f v^{\frac12}\|_{\bar{A}, Q}^2 \|v^{-\frac12}\|_{A, Q}^2 w(Q)
\nonumber \\
&\le \sum_{Q \in \S} \|f v^{\frac12}\|_{\bar{A}, Q}^2 \langle w \rangle_{Q}^{-1} w(Q)
\lesssim \sum_{Q \in \S} \Big(\inf_{Q} M_{\bar{A}}(f v^{\frac12}) \Big)^2 |E_Q|
\nonumber \\
&\le \int_{\Rn} M_{\bar{A}}(f v^{\frac12})(x)^2 dx
\lesssim \|f\|_{L^2(v)}^2 = \|f\|_{L^2(M_{\D}w)}^2,
\end{align}
where we used Lemma \ref{lem:MBp} in the last inequality.  Note that for any $x \in Q_j$,
\begin{align}\label{eq:MD}
M_{\D}(w\mathbf{1}_{\Omega^c})(x)
&\le M_{\D}(w\mathbf{1}_{Q_j^c})(x)
=\sup_{x \in Q \in \D} \frac{1}{|Q|} \int_{Q \cap Q_j^c} w(y) dy
\nonumber \\
&\le \sup_{Q \in \D:Q_j \subsetneq Q} \frac{1}{|Q|} \int_{Q \cap Q_j^c} w(y) dy
\le \inf_{Q_j} M_{\D}w.
\end{align}
Thus, combining \eqref{eq:AS-FS} with \eqref{eq:MD}, we have
\begin{align}\label{eq:Ig}
{\rm I_g} &\le \frac{4}{\lambda^2} \int_{\Omega^c} \A_{\S}^2(g)(x) w(x) dx
\nonumber \\
&\lesssim \frac{1}{\lambda^2} \int_{\Rn} |g(x)| M_{\D}(w\mathbf{1}_{\Omega^c})(x) dx
\lesssim \frac{1}{\lambda} \int_{\Rn} |g(x)| M_{\D}(w\mathbf{1}_{\Omega^c})(x) dx
\nonumber \\
&\le \frac{1}{\lambda} \int_{\Omega^c} |f(x)| M_{\D}(w\mathbf{1}_{\Omega^c})(x) dx
+ \frac{1}{\lambda} \sum_{j} \langle f \rangle_{Q_j} \int_{Q_j} M_{\D}(w\mathbf{1}_{\Omega^c})(x) dx
\nonumber \\
&\le \frac{1}{\lambda} \int_{\Rn} |f(x)| M_{\D}w(x) dx
+ \frac{1}{\lambda} \sum_{j} \int_{Q_j} |f(y)| M_{\D}w(y) dy
\nonumber \\
&\lesssim \frac{1}{\lambda} \int_{\Rn} |f(x)| M_{\D}w(x) dx.
\end{align}
Consequently, gathering \eqref{eq:IgIb}, \eqref{eq:wo} and \eqref{eq:Ig}, we conclude that \eqref{eq:S-11} holds.
\end{proof}

\subsection{Fefferman-Stein inequalities}
In order to show Theorem \ref{thm:FS}, we recall an extrapolation theorem for arbitrary weights in \cite[Theorem 1.3]{CP}, or \cite[Theorem~4.11]{CO} in the general Banach function spaces.

\begin{lemma}\label{lem:extra}
Let $\mathcal{F}$ be a collection of pairs $(f, g)$ of nonnegative measurable functions. If for some $p_0 \in (0, \infty)$ and for every weight $w$,
\begin{align*}
\|f\|_{L^{p_0}(w)} \le C \|g\|_{L^{p_0}(Mw)}, \quad (f, g) \in \mathcal{F},
\end{align*}
then for every $p \in (p_0, \infty)$ and for every weight $w$,
\begin{align*}
\|f\|_{L^p(w)} \le C \|g(Mw/w)^{\frac{1}{p_0}}\|_{L^p(w)}, \quad (f, g) \in \mathcal{F}.
\end{align*}
\end{lemma}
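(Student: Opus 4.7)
The plan is to run the Rubio de Francia extrapolation scheme, in the variant adapted to arbitrary weights originally due to Cruz-Uribe and P\'erez. Fix $p \in (p_0, \infty)$ and an arbitrary weight $w$, and put $s := (p/p_0)' \in (1, \infty)$. By duality on $L^{p/p_0}(w)$,
\[
\|f\|_{L^p(w)}^{p_0} = \|f^{p_0}\|_{L^{p/p_0}(w)} = \sup \int_{\Rn} f(x)^{p_0}\, h(x)\, w(x)\, dx,
\]
where the supremum is taken over nonnegative $h$ with $\|h\|_{L^s(w)} \le 1$. The idea is, for each admissible $h$, to majorize the integrand by a weight admissible for the hypothesis.

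To this end, introduce the weighted maximal operator
\[
M_w h(x) := \sup_{Q \ni x} \frac{1}{w(Q)} \int_Q |h|\, w\, dy,
\]
which is bounded on $L^s(w)$ for every $s > 1$ with norm $\|M_w\|_{s,w}$. Define the Rubio de Francia iterate
\[
\mathcal{R}h := \sum_{k=0}^{\infty} \frac{M_w^k h}{2^k \|M_w\|_{s,w}^k},
\]
where $M_w^k$ denotes the $k$-fold composition. By construction, $h \le \mathcal{R}h$ pointwise, $\|\mathcal{R}h\|_{L^s(w)} \le 2\|h\|_{L^s(w)}$, and $M_w(\mathcal{R}h) \le 2\|M_w\|_{s,w}\, \mathcal{R}h$. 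Setting $W := \mathcal{R}h \cdot w$ and applying the hypothesis to $W$ yields
\[
\int_{\Rn} f^{p_0}\, h\, w\, dx \le \int_{\Rn} f^{p_0}\, W\, dx = \|f\|_{L^{p_0}(W)}^{p_0} \le C \|g\|_{L^{p_0}(MW)}^{p_0}.
\]

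The heart of the argument is the pointwise bound $MW(x) \lesssim Mw(x)\, \mathcal{R}h(x)$. For any cube $Q$ containing $x$, the factorization
\[
\frac{1}{|Q|} \int_Q \mathcal{R}h \cdot w\, dy = \frac{w(Q)}{|Q|} \cdot \frac{1}{w(Q)} \int_Q \mathcal{R}h \cdot w\, dy \le Mw(x)\, M_w(\mathcal{R}h)(x) \le 2\|M_w\|_{s,w}\, Mw(x)\, \mathcal{R}h(x)
\]
gives the claim after taking the supremum over $Q \ni x$. Inserting this bound and then applying H\"older's inequality with exponents $p/p_0$ and $s = (p/p_0)'$ to the product $g^{p_0} (Mw/w) \cdot \mathcal{R}h \cdot w$ produces
\[
\|g\|_{L^{p_0}(MW)}^{p_0} \lesssim \|g(Mw/w)^{1/p_0}\|_{L^p(w)}^{p_0}\, \|\mathcal{R}h\|_{L^s(w)} \lesssim \|g(Mw/w)^{1/p_0}\|_{L^p(w)}^{p_0},
\]
and the desired inequality follows upon taking the supremum over admissible $h$.

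The principal obstacle is the construction of the auxiliary weight $W$: the iterate $\mathcal{R}$ must simultaneously control $\|h\|_{L^s(w)}$ and satisfy the reverse Jensen-type self-improvement $M_w(\mathcal{R}h) \lesssim \mathcal{R}h$, and the pointwise domination $MW \lesssim Mw \cdot \mathcal{R}h$ is precisely what transfers the hypothesis (whose right-hand side involves the unweighted maximal function of $W$) into a bound phrased in terms of $Mw/w$. The requirement $s > 1$ for the $L^s(w)$-boundedness of $M_w$ is exactly what forces the conclusion to hold only for $p > p_0$.
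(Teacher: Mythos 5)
Your overall scheme --- dualizing $\|f\|_{L^p(w)}^{p_0}$ on $L^{p/p_0}(w)$, building a Rubio de Francia majorant $\mathcal{R}h$ of the dual function, applying the hypothesis to the weight $W=\mathcal{R}h\cdot w$, and transferring back through a pointwise bound $MW\lesssim Mw\cdot \mathcal{R}h$ followed by H\"older with exponents $p/p_0$ and $s=(p/p_0)'$ --- is exactly the argument behind the cited result of Cruz-Uribe and P\'erez \cite{CP}; the paper itself does not reprove the lemma but quotes it. There is, however, one genuine gap in your write-up: you run the iteration with the \emph{non-centered} weighted maximal operator $M_wh(x)=\sup_{Q\ni x}\frac{1}{w(Q)}\int_Q|h|\,w\,dy$ and assert that it is bounded on $L^s(w)$ for every weight $w$. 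For arbitrary (in particular non-doubling) weights in dimension $n\ge 2$ this is false: a standard ``hyperbola'' example --- a unit bump at the origin together with small bumps at the points $(2^i,2^{-i})$, each of which can be joined to the origin by a cube, e.g.\ $[0,2^i]\times[2^{-i}-2^i,\,2^{-i}]$, that misses all the other bumps --- shows that the non-centered $M_w$ can fail to be bounded on $L^s(w)$ for every finite $s$. Only the centered (Besicovitch) or dyadic (Doob) weighted maximal operators are bounded on $L^s(w)$ with constants depending solely on $n$ and $s$. This matters twice: the series defining $\mathcal{R}h$ may diverge, and even when $\|M_w\|_{L^s(w)}$ happens to be finite your final constant contains it, whereas the lemma needs a constant independent of $w$ --- this uniformity is precisely what is used later, e.g.\ in the proof of Theorem \ref{thm:FS}.

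The repair is routine and the rest of your argument is correct. Define $\mathcal{R}$ with the centered operator $M_w^c$, and in the pointwise step enlarge $Q$ to the concentric cube $\widetilde Q=Q(x,2\ell(Q))\supset Q$, which costs only $2^n$: for every cube $Q\ni x$,
\begin{equation*}
\frac{1}{|Q|}\int_Q \mathcal{R}h\,w\,dy
\le 2^n\,\frac{w(\widetilde Q)}{|\widetilde Q|}\cdot\frac{1}{w(\widetilde Q)}\int_{\widetilde Q}\mathcal{R}h\,w\,dy
\le 2^n\,Mw(x)\,M_w^c(\mathcal{R}h)(x)
\lesssim Mw(x)\,\mathcal{R}h(x).
\end{equation*}
With this change the duality step (where $p>p_0$ is exactly what makes $s<\infty$), the application of the hypothesis to $W=\mathcal{R}h\cdot w$, and the final H\"older estimate all go through verbatim and yield the stated inequality with a constant depending only on $n$, $p$, $p_0$ and the constant in the hypothesis. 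A minor convention point worth recording: on the set $\{w=0\}$ one interprets $(Mw/w)^{1/p_0}$ as $+\infty$, so the conclusion is trivial unless $g$ vanishes a.e.\ there; your chain implicitly uses this when rewriting $\int_{\Rn} g^{p_0}\,Mw\,\mathcal{R}h\,dx$ as an integral against $w\,dx$.
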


\begin{proof}[\textbf{Proof of Theorem \ref{thm:FS}.}]
 Note that $v(x):=Mw(x) \ge \langle w\rangle_{Q}$ for any dyadic cube $Q \in \mathcal{S}$ containing $x$. Let $A$ be a Young function such that ${\bar{A}}\in B_{p}$.
By Lemma \ref{lem:MBp}, we have
\begin{align}\label{eq:MAf}
\|M_{\bar{A}} (f v^{\frac{1}{p}})\|_{L^{p} } \lesssim \|f\|_{L^{p}(v)}.
\end{align}
Thus, using Lemma \ref{lem:S-sparse}, H\"{o}lder's inequality and \eqref{eq:MAf}, we deduce that
\begin{align*}
\|S_{\alpha,L}(\vec{f})\|_{L^p(w)}^p
&\lesssim \alpha^{pn}\sum_{j=1}^{K_0} \sum_{Q \in \mathcal{S}_j}
 \langle |f| \rangle_{Q}^p \int_Q w(x)dx
\\
&\le \alpha^{pn}\sum_{j=1}^{K_0} \sum_{Q \in \mathcal{S}_j}
\|f \nu^{\frac{1}{p}}\|_{\bar{A}, Q}^p
\|\nu^{-\frac{1}{p}}\|_{A, Q}^p \int_Q w(x)dx
\\
&\le \alpha^{pn}\sum_{j=1}^{K_0} \sum_{Q \in \mathcal{S}_j}
 \|f \nu^{\frac{1}{p}}\|_{\bar{A}, Q}^p
|Q|
\\
&\lesssim \alpha^{pn}\sum_{j=1}^{K_0} \sum_{Q \in \mathcal{S}_j}
\Big(\inf_{Q} M_{\bar{A}}(f \nu^{\frac{1}{p}}) \Big)^p |E_Q|
\\
&\lesssim\alpha^{pn}\int_{\mathbb{R}^n}
 M_{\bar{A}}(f \nu^{\frac{1}{p}})(x)^p dx
\\
&\lesssim \alpha^{pn}\|f\|_{L^{p}(v)}^p
= \alpha^{pn} \|f\|_{L^{p}(Mw)}^p.
\end{align*}
This shows \eqref{eq:SMw-1}. Finally, \eqref{eq:SMw-2} is a consequence of \eqref{eq:SMw-1} and Lemma \ref{lem:extra}.
\end{proof}

\subsection{Local decay estimates}

To show Theorem \ref{thm:local}, we need the following Carleson embedding theorem from \cite[Theorem~4.5]{HP}.
\begin{lemma}\label{lem:Car-emb}
Suppose that the sequence $\{a_Q\}_{Q \in \mathcal{D}}$ of nonnegative numbers satisfies the Carleson packing condition
\begin{align*}
\sum_{Q \in \mathcal{D}:Q \subset Q_0} a_Q \leq A w(Q_0),\quad\forall Q_0 \in \mathcal{D}.
\end{align*}
Then for all $p \in (1, \infty)$ and $f \in L^p(w)$,
\begin{align*}
\bigg(\sum_{Q \in \mathcal{D}} a_Q \Big(\frac{1}{w(Q)} \int_{Q} f(x) w(x) \ du(x)\Big)^p \bigg)^{\frac1p}
\leq A^{\frac1p} p' \|f\|_{L^p(w)}.
\end{align*}
\end{lemma}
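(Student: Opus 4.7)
The plan is to reduce the Carleson embedding inequality to the weighted dyadic maximal inequality via a layer-cake decomposition that exploits dyadic nesting. Define the weighted dyadic maximal operator
\[
M_{\mathcal{D}}^{w} f(x) := \sup_{Q \in \mathcal{D}, \, Q \ni x} \frac{1}{w(Q)} \int_Q f(y) w(y) \, dy,
\]
and write $\langle f \rangle_Q^{w} := \frac{1}{w(Q)} \int_Q f w \, dx$ for the weighted average. The first input I will use is that $M_{\mathcal{D}}^{w}$ is of weighted weak type $(1,1)$ with constant $1$, namely $w(\{M_{\mathcal{D}}^{w} f > t\}) \leq \frac{1}{t}\|f\|_{L^{1}(w)}$; this is immediate from a standard stopping-time / maximal-cube argument (the maximal cubes realizing the supremum are pairwise disjoint and each satisfies $w(Q) < \frac{1}{t}\int_Q f w \, dx$). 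Marcinkiewicz interpolation with the trivial $L^{\infty}(w)$ bound then yields $\|M_{\mathcal{D}}^{w}\|_{L^{p}(w) \to L^{p}(w)} \leq p'$ for $1<p<\infty$.

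Next, I would rewrite the left-hand side of the desired inequality via a layer-cake identity in the discrete variable $\langle f \rangle_Q^{w}$. Assuming without loss of generality that $f \geq 0$, one has
\begin{align*}
\sum_{Q \in \mathcal{D}} a_Q \bigl(\langle f \rangle_Q^{w}\bigr)^p
= p \int_{0}^{\infty} t^{p-1} \!\! \sum_{Q \in \mathcal{D} : \langle f \rangle_Q^{w} > t} a_Q \, dt.
\end{align*}
For each fixed $t>0$, let $\{Q_j^{t}\}_j$ denote the maximal dyadic cubes $Q$ with $\langle f \rangle_Q^{w} > t$; these are pairwise disjoint, and every $Q$ with $\langle f \rangle_Q^{w} > t$ is contained in exactly one $Q_j^{t}$. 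Consequently, the Carleson packing hypothesis applied on each $Q_j^{t}$ gives
\[
\sum_{Q : \langle f \rangle_Q^{w} > t} a_Q
= \sum_{j} \sum_{Q \subset Q_j^{t}} a_Q
\leq A \sum_{j} w(Q_j^{t})
= A \, w\!\left(\bigcup_j Q_j^{t}\right)
= A \, w(\{M_{\mathcal{D}}^{w} f > t\}),
\]
where the last equality uses that $x$ belongs to some $Q_j^{t}$ iff the weighted dyadic maximal function at $x$ exceeds $t$.

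Inserting this bound into the layer-cake identity and recognizing the right-hand side as the $L^p(w)$-norm of $M_{\mathcal{D}}^{w} f$ yields
\[
\sum_{Q \in \mathcal{D}} a_Q \bigl(\langle f \rangle_Q^{w}\bigr)^p
\leq A \, p \int_0^\infty t^{p-1} w(\{M_{\mathcal{D}}^{w} f > t\}) \, dt
= A \, \|M_{\mathcal{D}}^{w} f\|_{L^p(w)}^p
\leq A \, (p')^p \, \|f\|_{L^p(w)}^p,
\]
and taking $p$-th roots gives the stated bound $A^{1/p} p' \|f\|_{L^p(w)}$. The only nontrivial ingredient is the sharp constant $p'$ in the weighted dyadic maximal inequality; I expect this to be the main point requiring care, but it is standard and follows from the $(1,1)$ weak-type estimate with constant $1$ combined with Marcinkiewicz interpolation (one can also obtain it directly by the Doob-type integration-by-parts argument for dyadic martingales to get the sharp $p'$).
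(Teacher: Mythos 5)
Your argument is correct and is essentially the standard proof of the dyadic Carleson embedding theorem; the paper itself offers no proof of this lemma, merely citing Hyt\"onen--P\'erez \cite[Theorem~4.5]{HP}, and the layer-cake decomposition over maximal cubes combined with the weighted dyadic maximal inequality is exactly the argument behind that reference. The one imprecision is your claim that Marcinkiewicz interpolation of the weak $(1,1)$ bound with the trivial $L^\infty$ bound yields the operator norm $p'$: naive interpolation gives a strictly larger constant, and to obtain the sharp $p'$ (which is needed to match the stated constant $A^{1/p}p'$) one must use the localized weak-type estimate $t\,w(\{M_{\mathcal{D}}^{w}f>t\})\leq \int_{\{M_{\mathcal{D}}^{w}f>t\}}f\,w\,dx$ together with the Doob integration-by-parts argument, which you correctly identify as the fallback at the end; with that substitution the proof is complete.
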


\begin{lemma}\label{lem:CF-local}
For every $1<p<\infty$ and $w \in A_p$, we have
\begin{align}\label{eq:CF-local}
\|S_{\alpha,L}f\|_{L^2(B, w)} \leq c_{n,p} \alpha^{n}[w]_{A_p}^{\frac12} \|Mf\|_{L^2(B, w)},
\end{align}
for every ball $B \subset \mathbb{R}^n$ and $f\in L_c^\infty(\mathbb{R}^n)$ with $\supp(f) \subset B$.
\end{lemma}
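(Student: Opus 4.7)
The plan is to combine the sparse domination of Lemma \ref{lem:S-sparse} with the $A_p$ condition on $w$, exploiting the support restriction $\supp f \subset B$ to localize everything to the ball.

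By Lemma \ref{lem:S-sparse}, $S_{\alpha,L} f(x) \lesssim \alpha^n \sum_{j=1}^{3^n} \mathcal{A}_{\mathcal{S}_j}^2 f(x)$ a.e., so it suffices to prove, for each sparse family $\mathcal{S}$, that
\[
\|\mathcal{A}_\mathcal{S}^2 f\|_{L^2(B,w)}^2 = \sum_{Q \in \mathcal{S},\,Q \cap B \neq \emptyset} \langle|f|\rangle_Q^2\,w(Q \cap B)
\lesssim [w]_{A_p}\,\|Mf\|_{L^2(B,w)}^2,
\]
because taking square roots then produces the claimed factor $[w]_{A_p}^{1/2}$. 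Only cubes with $Q\cap B\neq\emptyset$ contribute, since $\supp f\subset B$.

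To handle each term in the sparse sum I would use two key ingredients: (i) the standard pointwise bound $\langle|f|\rangle_Q \le c_n \inf_Q Mf$, which follows from $Q \subset B(x,\sqrt{n}\,\ell(Q))$ for any $x \in Q$; and (ii) the $A_p$-based comparison $w(Q) \le [w]_{A_p}(|Q|/|E_Q|)^p w(E_Q) \le c_{n,p}[w]_{A_p}\,w(E_Q)$ for the sparse subset $E_Q \subset Q$, obtained by applying H\"older's inequality to $|E_Q| = \int_{E_Q} w^{1/p} w^{-1/p}\,dx$. Combined with the pairwise disjointness of $\{E_Q\}$, this gives
\[
\sum_Q \langle|f|\rangle_Q^2 w(Q \cap B) \le c_{n,p}[w]_{A_p} \sum_Q (\inf_Q Mf)^2 w(E_Q) \le c_{n,p}[w]_{A_p}\!\int_{\bigcup_Q E_Q} (Mf)^2\,w\,dx.
\]

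The remaining step is to replace the integral over $\bigcup_Q E_Q$ by one over $B$. I would split $\mathcal{S}$ according to whether $\ell(Q)\lesssim r(B)$ (so $Q\subset CB$ for a dimensional $C$, and $\bigcup E_Q\subset CB$) or $\ell(Q)\gtrsim r(B)$ (at each such scale only $O(1)$ dyadic cubes meet $B$, so $\langle|f|\rangle_Q\le\|f\|_{L^1}/|Q|$ and $w(Q\cap B)\le w(B)$ yield a geometric series in scale whose sum is $\lesssim(\|f\|_{L^1}/|B|)^2 w(B)$). The crucial observation, using $\supp f\subset B$, is that $Mf(x)\sim \|f\|_{L^1}/|B|$ for $x\in CB\setminus B$ while $Mf(x)\ge c_n\|f\|_{L^1}/|B|$ for $x\in B$, so $Mf$ is essentially constant on $CB\setminus B$ and $\int_{CB}(Mf)^2 w\,dx\lesssim \int_B(Mf)^2 w\,dx$ via doubling of $w$. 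The main obstacle I anticipate is arranging this localization and doubling carefully enough that no additional power of $[w]_{A_p}$ creeps into the sparse estimate beyond the single factor needed to produce the claimed $[w]_{A_p}^{1/2}$ after the final square root.
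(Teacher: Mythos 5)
Your opening moves are sound: the reduction to $\sum_{Q\cap B\neq\emptyset}\langle|f|\rangle_Q^2\,w(Q\cap B)$, the bound $\langle|f|\rangle_Q\le c_n\inf_Q Mf$, the $A_p$ comparison $w(Q)\le c_{n,p}[w]_{A_p}w(E_Q)$ with the disjointness of $\{E_Q\}$, and the treatment of the cubes with $\ell(Q)\gtrsim r(B)$ are all correct; up to that point you have essentially proved the \emph{global} estimate $\|S_{\alpha,L}f\|_{L^2(\Rn,w)}\lesssim\alpha^n[w]_{A_p}^{1/2}\|Mf\|_{L^2(\Rn,w)}$. The genuine gap is the final localization $\int_{CB}(Mf)^2w\,dx\lesssim\int_B(Mf)^2w\,dx$, and you correctly identified it as the delicate point, but the mechanism you propose fails. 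First, the claim that $Mf(x)\sim\|f\|_{L^1}/|B|$ on $CB\setminus B$ is false when $f$ concentrates near $\partial B$: just outside $B$ one then has $Mf\sim\|f\|_{\infty}$, not $\|f\|_{L^1}/|B|$. Second, the transfer inequality cannot hold with a constant independent of $w$: in dimension one take $B=(0,1)$, $f=\mathbf{1}_{(1-\varepsilon,1)}$, and $w$ equal to $1$ on $(1,\infty)$ and comparable to $\max(|x-1|^a,\lambda)$ on $(-\infty,1)$ with $0<a<p-1$ and $\varepsilon=\lambda^{1/a}$; then $[w]_{A_p}\sim\lambda^{-1}$, while $\int_{(1,1+\varepsilon)}(Mf)^2w\,dx\sim\varepsilon$ and $\int_B(Mf)^2w\,dx\sim\lambda\varepsilon$, so the annulus contribution exceeds the inside contribution by a factor $\sim[w]_{A_p}$. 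Hence even a repaired version of your last step (via doubling or a reflection argument) necessarily loses an extra factor of order $[w]_{A_p}$, yielding at best $[w]_{A_p}$ in place of the stated $[w]_{A_p}^{1/2}$; this loss is not cosmetic, since the linear-in-$[w]_{A_p}$ bound for the squared norms is exactly what produces the Gaussian decay $e^{-c_2t^2}$ in Theorem \ref{thm:local}.

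The paper sidesteps the problem by not using the global $3^n$-grid sparse bound \eqref{eq:S-sparse} at all: it applies the local pointwise estimate \eqref{eq:SLQj} on a fixed cube $Q$ containing the support of $f$, so every sparse cube $Q'$ (and hence every $E_{Q'}$) lies inside $Q$ and the localization is automatic; note that the dilated averages are harmless because $\langle|f|\rangle_{2^jQ'}\le\inf_{Q'}Mf$. It then verifies the Carleson packing condition $\sum_{Q'\subset Q''}w(Q')\lesssim[w]_{A_\infty}w(Q'')$ with the Fujii--Wilson $A_\infty$ constant and invokes the Carleson embedding theorem (Lemma \ref{lem:Car-emb}), obtaining the single factor $[w]_{A_\infty}\le[w]_{A_p}$. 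Your own $A_p$ trick $w(Q')\le c_{n,p}[w]_{A_p}w(E_{Q'})$ plus disjointness would finish equally well once you start from \eqref{eq:SLQj}; the only price, which the paper pays as well, is that the right-hand side is then the norm of $Mf$ over the containing cube rather than over $B$ itself, and that is the form in which the lemma is actually used in the proof of Theorem \ref{thm:local}.
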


\begin{proof}
Let $w \in A_p$ with $1<p<\infty$. Fix a ball $B \subset \mathbb{R}^n$. From \eqref{eq:SLQj}, there exists a sparse family $\S(Q) \subset \D(Q)$ so that
\begin{align*}
\widetilde{S}_{\alpha,L}(f)(x)^2
&\lesssim \alpha^{2n} \sum_{Q'\in \S(Q)} \sum_{j=0}^\infty 2^{-j\delta} \langle |f|\rangle_{2^jQ'}^2 \mathbf{1}_{Q'}(x)
\\
&\lesssim \alpha^{2n} \sum_{Q'\in \S(Q)}  \inf_{Q'} M(f)^2 \mathbf{1}_{Q'}(x), \quad\text{ a.e. } x\in Q.
\end{align*}
This implies that
\begin{align*}
\|\widetilde{S}_{\alpha,L}(f)\|_{L^2(Q, w)}^2
&\lesssim \alpha^{2n}\sum_{Q' \in \S(Q)} \inf_{Q'} M(f)^2 w(Q').
\end{align*}
From this and \eqref{eq:SSS}, we see that to obtain \eqref{eq:CF-local}, it suffices to prove
\begin{align}\label{eq:QSQ}
\sum_{Q' \in \S(Q)} \inf_{Q'} M(f)^2 w(Q') \lesssim [w]_{A_p} \|M(f)\|_{L^2(Q, w)}^2.
\end{align}

Recall that a new version of $A_{\infty}$ was introduced by Hyt\"{o}nen and P\'{e}rez \cite{HP}:
\begin{align*}
[w]'_{A_{\infty}} := \sup_{Q} \frac{1}{w(Q)} \int_{Q} M(w \mathbf{1}_Q)(x) dx.
\end{align*}
By \cite[Proposition~2.2]{HP}, there holds
\begin{align}\label{eq:AiAi}
c_n [w]'_{A_{\infty}} \le [w]_{A_{\infty}} \leq [w]_{A_p}.
\end{align}
Observe that for every $Q'' \in \mathcal{D}$,
\begin{align*}
\sum_{Q' \in \mathcal{S}(Q): Q' \subset Q''} w(Q')
&=\sum_{Q' \in \mathcal{S}(Q): Q' \subset Q''} \langle w \rangle_{Q'} |Q'|
\lesssim \sum_{Q' \in \mathcal{S}(Q): Q' \subset Q''} \inf_{Q'} M(w \mathbf{1}_{Q''}) |E_{Q'}|
\\
&\lesssim \int_{Q''} M(w \mathbf{1}_{Q''})(x) dx
\leq [w]'_{A_{\infty}} w(Q'') \lesssim [w]_{A_p} w(Q''),
\end{align*}
where we used the disjointness of $\{E_{Q'}\}_{Q' \in \mathcal{S}(Q)}$ and \eqref{eq:AiAi}. This shows that the collection $\{w(Q')\}_{Q' \in \S(Q)}$ satisfies the Carleson packing condition with the constant $c_n [w]_{A_p}$. As a consequence, this and Lemma \ref{lem:Car-emb} give that
\begin{align*}
\sum_{Q' \in \mathcal{S}(Q)} \inf_{Q'} M(f)^2 w(Q')
&\le \sum_{Q' \in \mathcal{S}(Q)} \bigg(\frac{1}{w(Q')} \int_{Q'} M(f)\, \mathbf{1}_Q w\, dx \bigg)^2 w(Q')
\\
&\lesssim [w]_{A_p} \|M(f) \mathbf{1}_Q\|_{L^2(w)}^2
=[w]_{A_p} \|M(f) \mathbf{1}_Q\|_{L^2(Q, w)}^2,
\end{align*}
where the above implicit constants are independent of $[w]_{A_p}$ and $Q$. This shows \eqref{eq:QSQ} and completes the proof of \eqref{eq:CF-local}.
\end{proof}

Next, let us see how Lemma \ref{lem:CF-local} implies Theorem \ref{thm:local}.
\vspace{0.2cm}
\begin{proof}[\textbf{Proof of Theorem \ref{thm:local}.}]
Let $p>1$ and $r>1$ be chosen later. Define the Rubio de Francia algorithm:
\begin{align*}
\mathcal{R}h=\sum_{k=0}^{\infty} \frac{M^{k}h}{2^k\|M\|^k_{L^{r'}\to L^{r'}}}.
\end{align*}
Then it is obvious that
\begin{align}\label{eq:hRh}
h \le \mathcal{R}h \quad\text{and}\quad \|\mathcal{R}h\|_{L^{r'} (\mathbb{R}^n)} \leq 2 \|h\|_{L^{r'} (\mathbb{R}^n)}.
\end{align}
Moreover, for any nonnegative $h \in L^{r'}(\mathbb{R}^n)$, we have that $\mathcal{R}h \in A_1$ with
\begin{align}\label{eq:Rh-A1}
[\mathcal{R}h]_{A_1} \leq 2 \|M\|_{L^{r'} \to L^{r'}} \leq c_n \ r.
\end{align}

By Riesz theorem and the first inequality in \eqref{eq:hRh}, there exists some nonnegative function $h \in L^{r'}(Q)$ with $\|h\|_{L^{r'}(Q)}=1$ such that
\begin{align}\label{eq:FQ}
\mathscr{F}_Q^{\frac1r} &:= |\{x \in Q:  S_{\alpha,L}(f)(x) > t M(f)(x)\}|^{\frac1r}
\nonumber \\ 
&= |\{x \in Q:  S_{\alpha,L}(f)(x)^2 > t^2 M(f)(x)^2\}|^{\frac1r}
\nonumber \\ 
& \leq \frac{1}{t^2} \bigg\| \bigg(\frac{S_{\alpha,L}(f)}{M(f)}\bigg)^2 \bigg\|_{L^r(Q)}
\leq \frac{1}{t^2} \int_{Q} S_{\alpha,L}(f)^2 \, h \, M(f)^{-2} dx
\nonumber \\ 
&\leq t^{-2} \|S_{\alpha,L}(f)\|_{L^2(Q, w)}^2,
\end{align}
where $w=w_1 w_2^{1-p}$, $w_1= \mathcal{R}h$ and $w_2 = M(f)^{2(p'-1)}$. Recall that Coifmann-Rochberg theorem  \cite[Theorem~3.4]{Jose} asserts that
\begin{align}\label{eq:C-R}
[(M(f))^{\delta}]_{A_1} \leq \frac{c_n}{1-\delta}, \quad\forall \delta \in (0, 1).
\end{align}
In view of \eqref{eq:Rh-A1} and \eqref{eq:C-R}, we see that $w_1, w_2 \in A_1$ provided $p>3$. Then the reverse $A_p$ factorization theorem gives that
$w=w_1 w_2^{1-p} \in A_p$ with
\begin{align}\label{eq:w-Ap-r}
[w]_{A_p} \leq [w_1]_{A_1} [w_2]_{A_1}^{p-1} \leq c_n ~ r.
\end{align}
Thus, gathering \eqref{eq:CF-local}, \eqref{eq:FQ} and \eqref{eq:w-Ap-r},
we obtain
\begin{align*}
\mathscr{F}_Q^{\frac1r}
& \le c_{n} t^{-2}\alpha^{2n} [w]_{A_p} \|M(f)\|_{L^2(Q, w)}^2
 \\ 
& = c_{n} t^{-2} \alpha^{2n}[w]_{A_p} \|\mathcal{R}h\|_{L^1(Q)}
 \\ 
&\le c_{n} t^{-2} \alpha^{2n}[w]_{A_p} \|\mathcal{R}h\|_{L^{r'}(Q)} |Q|^{\frac1r}
 \\ 
&\le c_{n} t^{-2}\alpha^{2n} [w]_{A_p} \|h\|_{L^{r'}(Q)}  |Q|^{\frac1r}
 \\ 
&\le c_{n} r t^{-2}\alpha^{2n}  |Q|^{\frac1r}.
\end{align*}

Consequently, if $t> \sqrt{c_n e}\,\alpha^{n}$, choosing $r>1$ so that $t^2/e = c_n \alpha^{2n}r$, we have
\begin{align}\label{eq:FQr-1}
\mathscr{F}_Q \le (c_n \alpha^{2n}r t^{-2})^r |Q| = e^{-r}  |Q|
= e^{-\frac{t^2}{c_n e\alpha^{2n}}}  |Q|.
\end{align}
If $0<t \le \sqrt{c_n e}\alpha^{n}$, it is easy to see that
\begin{equation}\label{eq:FQr-2}
\mathscr{F}_Q \le  |Q| \le e \cdot e^{-\frac{t^2}{c_n e\alpha^{2n}}}  |Q|.
\end{equation}
Summing \eqref{eq:FQr-1} and \eqref{eq:FQr-2} up, we deduce that
\begin{equation*}
\mathscr{F}_Q=\mu(\{x \in Q:  S_{\alpha,L}(f)(x) > t M(f)(x)\})
\le c_1 e^{-c_2 t^2/\alpha^{2n}} |Q|,\quad\forall t>0.
\end{equation*}
This proves \eqref{eq:local}.
\end{proof}

\subsection{Mixed weak type estimates}
To proceed the proof of Theorem \ref{thm:mixed}, we establish a Coifman-Fefferman inequality.
\begin{lemma}\label{lem:CF}
For every $0<p<\infty$ and $w \in A_{\infty}$, we have
\begin{align}\label{eq:CF}
\|S_{\alpha, L}f\|_{L^p(w)} &\lesssim \|Mf\|_{L^p(w)},
\end{align}
\end{lemma}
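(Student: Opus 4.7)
The plan is to use the pointwise sparse domination of Lemma~\ref{lem:S-sparse} to reduce the proof to a sparse analogue: for every sparse family $\S$, every $w \in A_{\infty}$, and every $0<p<\infty$,
\begin{equation}\label{eq:plan-sparse-CF}
\|\A_{\S}^2 f\|_{L^p(w)} \lesssim \|Mf\|_{L^p(w)}.
\end{equation}
Summing over the $3^n$ families delivered by Lemma~\ref{lem:S-sparse} and absorbing the factor $\alpha^n$ into the implicit constant then yields \eqref{eq:CF}. The key input from the $A_\infty$ hypothesis, which I will use repeatedly, is the standard consequence $w(Q) \lesssim_{[w]_{A_\infty}} w(E_Q)$ for any sparse family, inherited from the inequality $\bigl(|E|/|Q|\bigr)^{\delta} \lesssim w(E)/w(Q)$ for $E \subset Q$.

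I would first establish \eqref{eq:plan-sparse-CF} at $p=2$ by expanding
$\|\A_{\S}^2 f\|_{L^2(w)}^2 = \sum_{Q \in \S} \langle |f|\rangle_Q^2\, w(Q)$,
replacing $\langle |f|\rangle_Q$ by $\inf_Q Mf$, converting $w(Q)$ to $w(E_Q)$ via $A_\infty$, and invoking the pairwise disjointness of $\{E_Q\}_{Q \in \S}$. For the range $p>2$ I would pass to duality through $\|\A_\S^2 f\|_{L^p(w)}^2 = \|(\A_\S^2 f)^2\|_{L^{p/2}(w)}$: pairing against a nonnegative $h \in L^{(p/2)'}(w)$ of unit norm and using $\int_Q hw\, dx \le \inf_Q (M_w^{\D} h)\cdot w(Q)$, where $M_w^{\D}$ is the dyadic $w$-weighted maximal operator, the $A_\infty$ step again lets me transfer to $E_Q$, sum disjointly, and close via Hölder together with the $L^{(p/2)'}(w)$-boundedness of $M_w^{\D}$.

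For $0<p<2$, the sparse argument no longer closes directly. The cleanest fix is to invoke Rubio de Francia extrapolation for $A_\infty$ weights: a single-exponent estimate valid for every $w \in A_\infty$ (say $p=2$, just proved) extrapolates to the full range $0<p<\infty$. A self-contained alternative, which I would use if extrapolation machinery is to be avoided, is to deduce \eqref{eq:CF} directly from the local exponential decay of Theorem~\ref{thm:local}: the estimate
$\bigl|\{x\in B: S_{\alpha,L}f(x) > t Mf(x)\}\bigr| \le c_1 e^{-c_2 t^2}|B|$,
combined with the $A_\infty$ comparability $w(E)/w(B) \lesssim (|E|/|B|)^{\delta}$, upgrades to the $w$-weighted local decay $w(\{\ldots\}) \lesssim e^{-c' t^2} w(B)$, which upon integration in $t$ gives the Coifman--Fefferman inequality on each ball and hence globally by a Whitney/covering argument.

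The main obstacle is precisely the small-$p$ regime $0<p<2$, since termwise summation against a sparse family does not tolerate the subadditivity failure when $p/2<1$. Either of the two routes above (extrapolation, or the direct good-$\lambda$ passage from Theorem~\ref{thm:local}) handles this, the former being shorter and the latter making clear that the exponential strength of the local decay is what permits \emph{every} $p>0$ to be reached with a single $A_\infty$-dependent constant.
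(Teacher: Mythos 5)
Your proposal is correct and follows essentially the same route as the paper: sparse domination from Lemma \ref{lem:S-sparse}, the $p=2$ case via $\langle |f|\rangle_Q \le \inf_Q Mf$, the $A_\infty$ comparison $w(Q)\lesssim w(E_Q)$ and the disjointness of $\{E_Q\}_{Q\in\S}$, followed by $A_\infty$ extrapolation to the full range $0<p<\infty$. Since the extrapolation step already covers $p>2$, your separate duality argument (and the alternative deduction from Theorem \ref{thm:local}, which as stated only applies to $f$ supported in $B$) is not needed.
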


\begin{proof}
Let $w \in A_\infty$. Then, it is well known that for any $\alpha \in (0, 1)$ there exists $\beta \in (0, 1)$ such that for any cube $Q$ and any measurable subset $A\subset Q$
\begin{align*}
|A| \le \alpha |Q| \quad\Longrightarrow\quad w(A) \le \beta w(Q).
\end{align*}
Thus, for the sparsity constant $\eta_j$ of $\mathcal{S}_j$ there exists
$\beta_j \in (0,1)$ such that for $Q \in \mathcal{S}_j$, we have
\begin{equation}\label{eq:w-EQ}
w(E_Q) =w(Q) - w(Q \setminus E_Q) \geq (1-\beta_j)w(Q),
\end{equation}
since $w(Q \setminus E_Q) \leq (1-\eta_j)w(Q)$.
It follows from \eqref{eq:S-sparse} and \eqref{eq:w-EQ} that
\begin{align}\label{eq:p=1}
\int_{\Rn} S_{\alpha,L}(\vec{f})(x)^2 w(x) \ dx
&\lesssim \sum_{j=1}^{3^n} \sum_{Q \in \mathcal{S}_j} \langle |f| \rangle_{Q}^2 w(Q)
\nonumber \\ 
&\lesssim \sum_{j=1}^{3^n} \sum_{Q \in \mathcal{S}_j} \Big(\inf_{Q}M(f) \Big)^2 w(E_Q)
\nonumber \\ 
&\lesssim \sum_{j=1}^{3^n} \sum_{Q \in \mathcal{S}_j} \int_{E_Q} M(f)(x)^2 w(x) \ dx
\nonumber \\ 
&\lesssim \int_{\mathbb{R}^n} M(f)(x)^2 w(x) \ dx.
\end{align}
This shows the inequality \eqref{eq:CF} holds for $p=2$.

To obtain the result \eqref{eq:CF} for every $p \in (0, \infty)$, we apply the $A_{\infty}$ extrapolation theorem from \cite[Corollary 3.15]{CMP11} in the Lebesgue spaces or \cite[Theorem~3.36]{CMM} for the general measure spaces. Let $\mathcal{F}$ be a family of pairs of functions. Suppose that for some $p_0 \in (0, \infty)$ and for every weight $v_0 \in A_{\infty}$,
\begin{align}\label{eq:fg-some}
\|f\|_{L^{p_0}(v_0)} \leq C_1 \|g\|_{L^{p_0}(v_0)}, \quad \forall (f, g) \in \mathcal{F}.
\end{align}
Then for every $p \in (0, \infty)$ and every weigh $v \in A_{\infty}$,
\begin{align}\label{eq:fg-every}
\|f\|_{L^p(v)} \leq C_2 \|g\|_{L^p(v)}, \quad \forall (f, g) \in \mathcal{F}.
\end{align}
From \eqref{eq:p=1}, we see that \eqref{eq:fg-some} holds for the exponent $p_0=2$ and the pair $(S_{\alpha,L}f, Mf)$.   Therefore, \eqref{eq:fg-every} implies that \eqref{eq:CF} holds for the general case $0<p<\infty$.
\end{proof}

\begin{proof}[\textbf{Proof of Theorem \ref{thm:mixed}.}]
In view of Lemma \ref{lem:CF}, we just present the proof for $S_{\alpha,L}$. We use a hybrid of the arguments in \cite{CMP} and \cite{LOPi}. Define
\begin{align*}
\mathcal{R}h(x)=\sum_{j=0}^{\infty} \frac{T^j_u h(x)}{2^j K_0^j},
\end{align*}
where $K_0>0$ will be chosen later and $T_uf(x) := M(fu)(x)/u(x)$ if $u(x) \neq 0$, $T_uf(x)=0$ otherwise. It immediately yields that
\begin{align}\label{eq:R-1}
h \leq \mathcal{R}h \quad
\text{and}\quad T_u(\mathcal{R}h) \leq 2K_0 \mathcal{R}h.
\end{align}
Moreover, follow the same scheme of that in \cite{CMP}, we get for some $r>1$,
\begin{align}\label{eq:R-2}
\mathcal{R}h \cdot uv^{\frac{1}{r'}} \in A_{\infty} \quad\text{and}\quad
\|\mathcal{R}h\|_{L^{r',1}(uv)} \leq 2 \|h\|_{L^{r',1}(uv)}.
\end{align}

Note that
\begin{equation}\label{e:Lpq}
\|f^q\|_{L^{p,\infty}(w)}= \|f\|^q_{L^{pq,\infty}(w)}, \ \ 0<p,q<\infty.
\end{equation}
This implies that
\begin{align*}
& \bigg\| \frac{S_{\alpha,L}(f)}{v} \bigg\|_{L^{1,\infty}(uv)}^{\frac{1}{r}}
= \bigg\| \bigg(\frac{S_{\alpha,L}(f)}{v}\bigg)^{\frac{1}{r}}\bigg\|_{L^{r,\infty}(uv)}
\\ 
&=\sup_{\|h\|_{L^{r',1}(uv)}=1}
\bigg|\int_{\Rn} |S_{\alpha,L}(f)(x)|^{\frac{1}{r}} h(x) u(x) v(x)^{\frac{1}{r'}} dx  \bigg|
\\ 
&\leq \sup_{\|h\|_{L^{r',1}(uv)}=1}
\int_{\Rn} |S_{\alpha,L}(f)(x)|^{\frac{1}{r}} \mathcal{R}h(x)
u(x) v(x)^{\frac{1}{r'}} dx.
\end{align*}
Invoking Lemma \ref{lem:CF} and H\"{o}lder's inequality, we obtain
\begin{align*}
& \int_{\Rn} |S_{\alpha,L}(f)(x)|^{\frac{1}{r}} \mathcal{R}h(x) u(x) v(x)^{\frac{1}{r'}} dx
\\ 
& \lesssim \int_{\Rn} M(f)(x)^{\frac{1}{r}} \mathcal{R}h(x) u(x) v(x)^{\frac{1}{r'}} dx
\\ 
& =\int_{\Rn} \bigg(\frac{M(f)(x)}{v(x)}\bigg)^{\frac{1}{r}} \mathcal{R}h(x) u(x) v(x) dx
\\ 
& \leq \bigg\|\bigg(\frac{M(f)}{\nu}\bigg)^{\frac{1}{r}} \bigg\|_{L^{r,\infty}(uv)}\|\mathcal{R}h\|_{L^{r',1}(uv)}
\\ 
& \leq \bigg\|\frac{M(f)}{v}\bigg\|_{L^{1,\infty}(uv)}^{\frac{1}{r}} \|h\|_{L^{r',1}(uv)},
\end{align*}
where we used \eqref{e:Lpq} and \eqref{eq:R-2} in the last inequality. Here we need to apply the weighted mixed weak type estimates for $M$ proved in Theorems 1.1 in \cite{LOP}. Consequently, collecting the above estimates, we get the desired result
\begin{equation*}
\bigg\| \frac{S_{\alpha,L}(f)}{v} \bigg\|_{L^{1,\infty}(uv)}
\lesssim \bigg\| \frac{M(f)}{v} \bigg\|_{L^{1,\infty}(uv)}
\lesssim \| f \|_{L^1(u)}.
\end{equation*}
The proof is complete.
\end{proof}

\subsection{Restricted weak type estimates}

\begin{proof}[{\bf Proof of Theorem \ref{thm:RW}}]
In view of \eqref{eq:S-sparse}, it suffices to show that
\begin{align}
\|\A_{\S}^2(\mathbf{1}_E)\|_{L^{p, \infty}(w)} \lesssim [w]_{A_p^{\mathcal{R}}}^{p+1} w(E)^{\frac1p}.
\end{align}
Since $\S$ is sparse, for every $Q \in \S$, there exists $E_Q \subset Q$ such that $|E_Q| \simeq |Q|$ and $\{E_Q\}_{Q \in \S}$ is a disjoint family. Note that $Q \subset Q(x, 2\ell(Q)) \subset 3Q$ for any $x \in Q$, where $Q(x, 2\ell(Q))$ denotes the cube centered at $x$ and with sidelength $2\ell(Q)$. Hence, for all $Q \in \S$ and for all $x \in Q$,
\begin{align}\label{eq:EQQ}
\frac{w(Q(x, 2\ell(Q)))}{w(E_Q)} \simeq \frac{w(Q)}{w(E_Q)}
\le [w]_{A_p^{\mathcal{R}}}^p \bigg(\frac{|Q|}{|E_Q|}\bigg)^p
\lesssim [w]_{A_p^{\mathcal{R}}}^p.
\end{align}
By duality, one has
\begin{align}\label{eq:AE}
\|\A_{\S}^2(\mathbf{1}_E)\|_{L^{p, \infty}(w)}^2
=\|\A_{\S}^2(\mathbf{1}_E)^2\|_{L^{p/2, \infty}(w)}
&=\sup_{\substack{0 \le h \in L^{(p/2)', 1}(w) \\ \|h\|_{L^{(p/2)',1}(w)}=1}}
\int_{\Rn} \A_{\S}^2(\mathbf{1}_E)^2 hw\, dx.
\end{align}
Fix such $h$ above.  Then, using \eqref{eq:EQQ}, the disjointness of $\{E_Q\}_{Q \in \S}$, H\"{o}lder's inequality and \eqref{eq:ME}, we conclude that
\begin{align}\label{eq:AEE}
&\int_{\Rn} \A_{\S}^2(\mathbf{1}_E)^2 hw\, dx
=\sum_{Q \in \S} \langle \mathbf{1}_E \rangle_Q^2 \int_{Q} hw\, dx
\nonumber\\
&\le \sum_{Q \in \S} \langle \mathbf{1}_E \rangle_Q^2 \bigg(
\fint_{Q(x, 2\ell(Q))} h\, dw\bigg) w(E_Q) \bigg(\frac{w(Q(x, 2\ell(Q)))}{w(E_Q)}\bigg)
\nonumber\\
&\lesssim  [w]_{A_p^{\mathcal{R}}}^p \sum_{Q \in \S} \Big(\inf_Q M\mathbf{1}_E\Big)^2 \Big(\inf_Q M_w^c h\Big) w(E_Q)
\nonumber\\
&\le  [w]_{A_p^{\mathcal{R}}}^p \int_{\Rn} M\mathbf{1}_E(x)^2 M_w^c h(x) w\, dx
\nonumber\\
&\le  [w]_{A_p^{\mathcal{R}}}^p \|(M\mathbf{1}_E)^2\|_{L^{p/2,\infty}(w)} \|M_w^c h\|_{L^{(p/2)', 1}(w)}
\nonumber\\
&\lesssim  [w]_{A_p^{\mathcal{R}}}^{p+2} w(E)^{\frac2p}.
\end{align}
Therefore,  \eqref{eq:AE} and \eqref{eq:AEE} immediately imply \eqref{eq:SLE}.
\end{proof}

\subsection{Endpoint estimates for commutators}
Recall that the sharp maximal function of $f$ is defined by
\[
M_{\delta}^{\sharp}(f)(x):=\sup_{x\in Q} \inf_{c \in \R} \bigg(\fint_{Q}|f^{\delta}-c|dx\bigg)^{\frac{1}{\delta}}.
\]
If we write $Q_{t, L} := t^m Le^{-t^m L}$, then
\begin{align*}
C_b(S_{\alpha, L})f(x) &= \bigg(\iint_{\Gamma_{\alpha}(x)} |Q_{t, L} \big((b(x)-b(\cdot))f(\cdot) \big)(y)|^2 \frac{dydt}{t^{n+1}} \bigg)^{\frac12}.
\end{align*}

\begin{lemma}\label{lem:MMf}
For any $0<\delta<1$,
\begin{align}
M_{\delta}^{\#}(\widetilde{S}_{\alpha, L} f)(x_{0}) \lesssim  \alpha^{2n} Mf(x_{0}),  \quad \forall  x_{0} \in \Rn.
\end{align}
\end{lemma}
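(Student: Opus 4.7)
The plan is to use the standard splitting method for sharp maximal function estimates. Fix $x_{0} \in \Rn$ and an arbitrary cube $Q \ni x_{0}$ with center $x_{Q}$, and decompose $f = f_{1} + f_{2}$, where $f_{1} := f\mathbf{1}_{2Q}$ and $f_{2} := f - f_{1}$. Since $\delta \in (0,1)$, the map $t \mapsto t^{\delta}$ is subadditive on $[0, \infty)$, so $|a^{\delta} - b^{\delta}| \le |a-b|^{\delta}$ for all $a, b \ge 0$. Taking $c := \widetilde{S}_{\alpha, L}(f_{2})(x_{Q})^{\delta}$ (finite for $f \in L^{\infty}_{c}(\Rn)$ thanks to the Gaussian decay of the kernel of $Q_{t,L}$) and using the sublinearity of $\widetilde{S}_{\alpha, L}$, this gives, for every $x \in Q$,
\begin{align*}
|\widetilde{S}_{\alpha, L}(f)(x)^{\delta} - c|
&\le |\widetilde{S}_{\alpha, L}(f)(x) - \widetilde{S}_{\alpha, L}(f_{2})(x_{Q})|^{\delta} \\
&\le \widetilde{S}_{\alpha, L}(f_{1})(x)^{\delta} + |\widetilde{S}_{\alpha, L}(f_{2})(x) - \widetilde{S}_{\alpha, L}(f_{2})(x_{Q})|^{\delta}.
\end{align*}
It is therefore enough to control the $L^{\delta}$-average on $Q$ of each term on the right by $\alpha^{2n} M f(x_{0})$, and then pass to the supremum over $Q \ni x_{0}$.

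For the local term, Kolmogorov's inequality combined with the weak-type $(1,1)$ bound \eqref{eq:S11} for $\widetilde{S}_{\alpha, L}$ immediately gives
\begin{align*}
\bigg( \fint_{Q} \widetilde{S}_{\alpha, L}(f_{1})(x)^{\delta} \, dx \bigg)^{1/\delta}
\lesssim_{\delta} \|\widetilde{S}_{\alpha, L}(f_{1})\|_{L^{1,\infty}(Q, dx/|Q|)}
\lesssim \alpha^{n} \fint_{2Q} |f| \, dx \lesssim \alpha^{n} M f(x_{0}).
\end{align*}

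For the nonlocal term, the plan is to establish the pointwise oscillation bound
\begin{equation*}
|\widetilde{S}_{\alpha, L}(f_{2})(x) - \widetilde{S}_{\alpha, L}(f_{2})(x_{Q})| \lesssim \alpha^{2n} Mf(x_{0}), \quad \forall x \in Q,
\end{equation*}
after which the corresponding $L^{\delta}$-average on $Q$ is trivially dominated by the same quantity. To obtain it, I would view $\widetilde{S}_{\alpha, L}(f_{2})(\cdot)$ as the norm of the Hilbert-space valued function $(y,t) \mapsto \Phi(|\cdot - y|/\alpha t)^{1/2} Q_{t,L}f_{2}(y)/t^{(n+1)/2}$ and apply the triangle inequality, obtaining
\begin{align*}
|\widetilde{S}_{\alpha, L}(f_{2})(x) - \widetilde{S}_{\alpha, L}(f_{2})(x_{Q})|^{2}
\le \int_{0}^{\infty}\!\!\int_{\Rn} |\Phi(|x-y|/\alpha t)^{1/2} - \Phi(|x_{Q}-y|/\alpha t)^{1/2}|^{2} |Q_{t,L}f_{2}(y)|^{2} \frac{dy\, dt}{t^{n+1}}.
\end{align*}
The smoothness of $\Phi$ localizes the integrand in $y$ to an annular region near the spheres of radius $\alpha t$ about $x$ and $x_{Q}$; combining this with the Gaussian bound $|Q_{t,L}(y,z)| \lesssim t^{-n}\exp(-c(|y-z|/t)^{m/(m-1)})$ inherited from {\rm (A2)}, a dyadic decomposition of $\Rn \setminus 2Q$ into annuli $2^{k+1}Q \setminus 2^{k}Q$, and splitting the $t$-integral at $t \sim \ell(Q)$, the Gaussian tail absorbs the resulting geometric series.

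The main obstacle is precisely this nonlocal step: carefully tracking the Gaussian decay against both the aperture $\alpha$ and the dyadic parameter $k$, which is where the exponent $\alpha^{2n}$ is generated. In essence, this is the same computation underlying the oscillation estimate \eqref{eq:osc-SL} quoted from \cite{BD}, so in practice the proof can be organized by paralleling (or directly invoking) that argument.
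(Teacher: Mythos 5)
Your plan is essentially correct, but it is organized differently from the paper's proof, so a comparison is worthwhile. The paper works with the square $\widetilde{S}_{\alpha,L}(f)^2$ and splits the \emph{integration region} in $\R^{n+1}_{+}$: it writes $\widetilde{S}_{\alpha,L}(f)^2=E(f)+F(f)$, where $E$ integrates over the tent $T(2Q)$ and $F$ over its complement, chooses $c_Q=F(f)(x_Q)$, treats $E(f\mathbf{1}_{8Q})$ by the weak $(1,1)$ bound \eqref{eq:S11} plus Kolmogorov (exactly as you do for your local term), treats $E(f\mathbf{1}_{(8Q)^c})$ by an explicit kernel computation based on \eqref{kernelestimate}, and imports the oscillation bound for $F$ verbatim from \cite[eq.~(35)]{BD}; the reduction to the unsquared statement is the standard $|a-\sqrt{c}|\le|a^2-c|^{1/2}$ step. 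You instead split the \emph{function}, $f=f\mathbf{1}_{2Q}+f_2$, stay at the level of $\widetilde{S}_{\alpha,L}$ itself via $|a^{\delta}-b^{\delta}|\le|a-b|^{\delta}$, and aim at a single pointwise oscillation bound for $\widetilde{S}_{\alpha,L}(f_2)$ through the Hilbert-space reverse triangle inequality; this buys a cleaner structure (no passage through the square and only one nonlocal estimate), while the paper's arrangement buys the ability to quote \cite{BD} for the only genuinely delicate oscillation step. If you carry your route out, three points need care: (i) putting $\Phi^{1/2}$ inside the $L^2(dy\,dt/t^{n+1})$ norm requires $\Phi^{1/2}$ to be Lipschitz, which does hold since $\Phi$ is a nonnegative smooth function with bounded Hessian (or one can take $\Phi$ to be the square of such a cutoff), whereas the paper avoids this issue by differencing the squares with $\Phi$ itself; (ii) you cannot literally ``directly invoke'' \cite[eq.~(35)]{BD}, because your nonlocal term also contains the contribution of $f_2$ over the tent region (small $t$), which in the paper is the separate term $E(f^{\infty})$ estimated by hand with \eqref{kernelestimate}, so that kernel computation must be reproduced inside your oscillation estimate; (iii) the natural threshold for the $t$-splitting is $\alpha t\sim\ell(Q)$ rather than $t\sim\ell(Q)$, and in the regime $\alpha t\gtrsim\ell(Q)$ one needs a crude bound such as $|Q_{t,L}f_2(y)|\lesssim \alpha^{n}Mf(x_0)$ for $|y-x_Q|\lesssim\alpha t$; tracking the resulting powers of $\alpha$ one still lands comfortably within the claimed $\alpha^{2n}Mf(x_0)$, so none of these caveats is an obstruction, only bookkeeping to be made explicit.
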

\begin{proof}
For any cube $Q \ni x_{0}$.  The lemma will be proved if we can show that

\[
\left(\fint_{Q}|\widetilde{S}_{\alpha,L}(f)^{2}(x)-c_{Q}|^{\delta}dx\right)^{\frac{1}{\delta}}\lesssim \alpha^{2n}Mf(x_{0})^{2},
\]
where $c_{Q}$ is a constant which will be determined later.

Denote $T(Q)=Q\times(0,\ell(Q))$. We write
\[
\widetilde{S}_{\alpha,L}(f)^{2}(x) =E(f)(x)+F(f)(x),
\]
where
\begin{align*}
E(f)(x) &:=\iint_{T(2Q)}\Phi\Big(\frac{|x-y|}{\alpha t}\Big)|Q_{t, L}f(y)|^{2}\frac{dydt}{t^{n+1}},
\\
F(f)(x) &:= \iint_{\R_{+}^{n+1}\backslash T(2Q)}\Phi\Big(\frac{|x-y|}{\alpha t}\Big)|Q_{t, L}f(y)|^{2}\frac{dydt}{t^{n+1}}.
\end{align*}
Let us choose $c_{Q}=F(f)(x_{Q})$ where $x_{Q}$ is the center of $Q$. Then
\begin{align*}
\bigg(&\fint_{Q}|\widetilde{S}_{\alpha,\psi}(f)^{2}-c_{Q}|^{\delta} dx\bigg)^{\frac{1}{\delta}}
=\left(\fint_{Q}|E(f)(x)+F(f)(x)-c_{Q}|^{\delta} dx\right)^{\frac{1}{\delta}}
\\
&\qquad \lesssim\left(\fint_{Q}|E(f)(x)|^{\delta} dx\right)^{\frac{1}{\delta}}+\left(\fint_{Q}|F(f)(x)-F(f)(x_{Q})|^{\delta}dx \right)^{\frac{1}{\delta}}
=:I+II
\end{align*}
We estimate each term separately. For the first term $I$, we set $f=f_0+f^\infty$, where $f_{0}=f\chi_{Q^{*}}, f^\infty=f\chi_{(Q^{*})^{c}}$ and $Q^{*}=8Q$.  Then we have
\begin{equation}\label{E0}
E(f)(x)\lesssim E(f_{0})(x)+E(f^\infty)(x).
\end{equation}
Therefore,
\[
\left(\fint_{Q}|E(f)(x)|^{\delta} dx\right)^{\frac{1}{\delta}}
\lesssim\left(\fint_{Q}|E(f_{0})(x)|^{\delta} dx\right)^{\frac{1}{\delta}}
+\left(\fint_{Q}|E(f^\infty (x)|^{\delta}dx\right)^{\frac{1}{\delta}}.
\]

It was proved in \cite[p. 884]{BD}, that
$\|\widetilde{S}_{\alpha,L}(f)\|_{L^{1,\infty}}\lesssim  \alpha^{n}\|S_{1,L}(f)\|_{L^{1,\infty}}$.
Then, by \eqref{eq:S11} and Kolmogorov inequality we have
\begin{align}\label{E1}
\bigg(\fint_{Q} & |E(f_{0})(x)|^{\delta} dx\bigg)^{\frac{1}{\delta}}
\leq\left(\fint_{Q}|\widetilde{S}_{\alpha,L}(f_0)|^{2\delta} dx \right)^{\frac{2}{2\delta}}
\nonumber\\
& \lesssim\|\widetilde{S}_{\alpha,L}(f_0)\|_{L^{1,\infty}(Q,\frac{dx}{|Q|})}^{2}
\lesssim \alpha^{2n}\Big(\fint_{Q^{*}}|f_{0}(x)|dx\Big)^{2}.
\end{align}
On the other hand,
\[
\begin{aligned}
\left(\fint_{Q}|E(f^\infty)(x)|^{\delta}dx\right)^{\frac{1}{\delta}}&\lesssim\frac 1{|Q|}\int_{\mathbb{R}^{n}}\int_{T(2Q)}\Phi\Big(\frac{x-y}{\alpha t}\Big)\Big|Q_{t, L}f(y)\Big|^{2}\frac{dydt}{t^{n+1}}dx\\
&=\frac{\alpha^{2n}}{|Q|}\int_{T(2Q)}|Q_{t, L}(f^\infty)(y)|^{2}\frac{dydt}{t},
\end{aligned}
\]
since
$\int_{\mathbb{R}^{n}}\Phi\Big(\frac{x-y}{\alpha t}\Big)dx\leq c_{n}(\alpha t)^{n}$.

For any $k\in N_+$, $p_{k,t}(x,y)$  denote the kernel of operator $(tL)^k e^{-tL}$. Note that condition (A2) implies that for any  $\delta_0>0$,  there exist $C, c>0$ such that
\begin{align}\label{kernelestimate}
|p_{k,t}(x,y)|\leq \frac{C}{t^{n/ m}} \bigg(\frac{t^{1/ m}}{t^{1/ m}+|x-y|}\bigg)^{n+\delta_0},\; \text{for all} \; x, y\in \mathbb{R}^n.
\end{align}

Thus, \eqref{kernelestimate} implies that
\[
\begin{aligned}
&\bigg(\int_{2Q}|Q_{t, L}(f^\infty)(y)|^{2}dy\bigg)^{1/ 2}\\
&\lesssim \sum_{j\geq 3}\bigg \{\int_{2Q} \bigg[\int_{2^{j+1}Q\setminus 2^j Q}\frac{1}{t^n}\bigg(\frac{t}{t+|y-z|}\bigg)^{n+\delta_0}|f(z)|dz\bigg]^2dy\bigg\}^{1/2}\\
&\lesssim \sum_{j\geq 3}\bigg \{\int_{2Q} \bigg[\int_{2^{j+1}Q\setminus 2^j Q}\frac{1}{t^n}\bigg(\frac{t}{2^j\ell(Q)}\bigg)^{n+\delta_0}|f(z)|dz\bigg]^2dy\bigg\}^{1/2}\\
&\lesssim  \bigg(\frac{t}{\ell(Q)}\bigg)^{\delta_0} |2Q|^{1/ 2} \sum_{j\geq 3} \frac{1}{2^{j\delta_0}} \bigg(\fint_{2^{j}Q}|f(z)|dz\bigg).
\end{aligned}
\]

Then one has
\begin{align*}
& \left(\fint_{Q}|E(f^\infty)(x)|^{\delta}dx\right)^{\frac{1}{\delta}}
\\
& \lesssim \bigg[\sum_{l=0}^{\infty}\frac 1{2^{l\delta_0}}\bigg(\fint_{2^{l}Q}|f_{j}| dx \bigg)\bigg]^{2}\frac{\alpha^{2n}}{|Q|}\int_{T(2Q)}|2Q|(t/\ell(Q))^{2\delta_0}\frac{dydt}{t}
\\
&\lesssim \alpha^{2n}\bigg[\sum_{l=0}^{\infty}\frac 1{2^{l\delta_0}}\bigg(\fint_{2^{l}Q}|f_{j}| dx \bigg)\bigg]^{2}
\lesssim \alpha^{2n}\sum_{l=0}^{\infty}\frac 1{2^{l\delta_0}}\bigg(\fint_{2^{l}Q}|f_j| dx\bigg)^{2},
\end{align*}
where in the last inequality we used H\"older's inequality.

Therefore, we obtain that
\begin{equation}\label{E2}
\left(\fint_{Q}|E(f^\infty)(x)|^{\delta}dx\right)^{\frac{1}{\delta}}
\lesssim \alpha^{2n}\sum_{l=0}^{\infty}\frac 1{2^{l\delta_0}}\bigg(\fint_{2^{l}Q}|f| dx\bigg)^{2}.
\end{equation}
Gathering \eqref{E0}, \eqref{E1} and \eqref{E2}, we deduce that
\[
I\lesssim\alpha^{2n}M(f)(x_{0}).
\]
To estimate the second term $II$, we shall use the following estimate \cite[Eq (35)]{BD}:
\begin{equation}
|F(f)(x)-F(f)(x_{Q})|\lesssim \alpha^{2n}\sum_{l=0}^{\infty}\frac 1{2^{l\delta}}\bigg(\fint_{2^{l}Q}|f| dx\bigg)^{2},
\end{equation}
for some $\delta>0$ and  all $x\in Q$, where $x_Q$ is the center of $Q$.  As a consequence, we have
\[
II=\left(\fint_{Q}|F(f)(x)-F(f)(x_{Q})|^{\delta} dx \right)^{\frac{1}{\delta}}\lesssim\alpha^{2n}Mf(x_{0}).
\]
This finish the proof. 
\end{proof}

\begin{lemma}\label{lem:MSL}
For any $0<\delta<\varepsilon<1$ and for any $b \in \BMO$,
\begin{align}
M_{\delta}^{\#}(C_b(\widetilde{S}_{\alpha, L})f)(x) \lesssim
\|b\|_{\BMO} \big(M_{L\log L}f(x) +M_{\varepsilon}(\widetilde{S}_{\alpha, L}f)(x) \big).
\end{align}
\end{lemma}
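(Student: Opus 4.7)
Fix $x_0 \in \Rn$ and a cube $Q \ni x_0$, and write $f = f_1 + f_2$ with $f_1 = f\,\mathbf{1}_{2Q}$. From the identity $b(y) - b(z) = (b(y) - b_{2Q}) - (b(z) - b_{2Q})$ and the fact that $\widetilde{S}_{\alpha,L}(\cdot)(y)$ is the $L^2$-norm of a linear operator applied to its argument (hence sublinear and satisfying the reverse triangle inequality), one derives the two-sided pointwise bound
\[
\big|C_b(\widetilde{S}_{\alpha,L})f(y) - \widetilde{S}_{\alpha,L}((b-b_{2Q})f_2)(y)\big| \le |b(y)-b_{2Q}|\,\widetilde{S}_{\alpha,L}f(y) + \widetilde{S}_{\alpha,L}((b-b_{2Q})f_1)(y).
\]
Take $c := \widetilde{S}_{\alpha,L}((b-b_{2Q})f_2)(x_Q)^{\delta}$ (with $x_Q$ the center of $Q$) as the infimum constant in $M_\delta^{\#}$. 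The elementary inequalities $||a|^\delta - |b|^\delta| \le |a-b|^\delta$ and $(u+v+w)^\delta \le u^\delta + v^\delta + w^\delta$ for $u,v,w\ge 0$ and $0<\delta\le 1$ reduce the task to controlling three averages $A_1, A_2, A_3$ over $Q$, corresponding respectively to the three summands above.

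\textbf{The two easy terms.} For $A_1 := (\fint_Q |b-b_{2Q}|^\delta (\widetilde{S}_{\alpha,L}f)^\delta \, dy)^{1/\delta}$, apply H\"older's inequality with exponent $r := \varepsilon/\delta > 1$, the John-Nirenberg bound $\|b-b_{2Q}\|_{L^q(Q,dy/|Q|)}\lesssim q\|b\|_{\BMO}$, and the definition of $M_\varepsilon$ to obtain $A_1 \lesssim \|b\|_{\BMO}\,M_\varepsilon(\widetilde{S}_{\alpha,L}f)(x_0)$. For $A_2 := (\fint_Q \widetilde{S}_{\alpha,L}((b-b_{2Q})f_1)^\delta \, dy)^{1/\delta}$, Kolmogorov's inequality (valid since $\delta<1$) combined with the weak-type $(1,1)$ estimate \eqref{eq:S11} gives $A_2 \lesssim \fint_{2Q}|b-b_{2Q}||f|\,dx$; the generalized H\"older inequality between the $\exp L$ and $L\log L$ Orlicz norms, together with $\|b-b_{2Q}\|_{\exp L,2Q}\lesssim \|b\|_{\BMO}$, then yields $A_2 \lesssim \|b\|_{\BMO}\,M_{L\log L}f(x_0)$.

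\textbf{The far-field term $A_3$ is the main obstacle.} We bound
\[
A_3 := \bigg(\fint_Q\big|\widetilde{S}_{\alpha,L}((b-b_{2Q})f_2)(y)-\widetilde{S}_{\alpha,L}((b-b_{2Q})f_2)(x_Q)\big|^\delta dy\bigg)^{1/\delta}
\]
by passing to the squared expression via the elementary inequality $|a-b|\le \sqrt{|a^2-b^2|}$ for $a,b\ge 0$, combined with
\[
\big|\widetilde{S}_{\alpha,L}(g)(y)^2-\widetilde{S}_{\alpha,L}(g)(x_Q)^2\big| \le \iint\Big|\Phi\big(\tfrac{|y-z|}{\alpha t}\big)-\Phi\big(\tfrac{|x_Q-z|}{\alpha t}\big)\Big|\,|Q_{t,L}g(z)|^2\,\tfrac{dz\,dt}{t^{n+1}}.
\]
Applied with $g = (b-b_{2Q})f_2$, the Lipschitz regularity of $\Phi$ contributes a factor $\ell(Q)/(\alpha t)$ effective on the region $\alpha t\gtrsim \ell(Q)$; combined with the pointwise kernel bound \eqref{kernelestimate}, the dyadic decomposition $f_2 = \sum_{l\ge 1}f\,\mathbf{1}_{2^{l+1}Q\setminus 2^l Q}$, the annular growth $\|b-b_{2Q}\|_{\exp L,2^l Q}\lesssim l\|b\|_{\BMO}$, and the Orlicz H\"older inequality on each annulus, one arrives at a geometric series in $l$ that sums to $A_3 \lesssim \|b\|_{\BMO}\,M_{L\log L}f(x_0)$. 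The hardest step is producing fast enough geometric decay in $l$ from the kernel smoothness to absorb the slowly growing factor $l\|b\|_{\BMO}$ on each annulus; this succeeds precisely because $\delta_0$ in \eqref{kernelestimate} can be chosen arbitrarily large, and because the smoothness of $\Phi$ provides an extra independent source of decay in $\ell(Q)/(\alpha t)$. Combining the three estimates gives the claimed inequality.
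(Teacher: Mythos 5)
Your proposal is correct and follows essentially the same route as the paper's proof: the same decomposition into the three pieces $|b-b_{2Q}|\,\widetilde{S}_{\alpha,L}f$, $\widetilde{S}_{\alpha,L}((b-b_{2Q})f_1)$ and the oscillation of $\widetilde{S}_{\alpha,L}((b-b_{2Q})f_2)$ about its value at the center, with H\"older plus John--Nirenberg for the first, Kolmogorov plus the weak $(1,1)$ bound \eqref{eq:S11} and the $\exp L$--$L\log L$ H\"older inequality for the second, and the difference-of-squares trick $|a-b|^2\le|a^2-b^2|$ combined with \eqref{kernelestimate} and the smoothness of $\Phi$ for the third. The only cosmetic differences are that the paper localizes with $8Q$ rather than $2Q$ and treats the third term by splitting the integration region into $T(2Q)$ and its complement (quoting \cite[eq.~(35)]{BD} for the far field) rather than your single global bound with $|\Phi-\Phi|$, and that any fixed $\delta_0>0$ in \eqref{kernelestimate} already suffices to sum the series with the factor $l$ from $\|b-b_{2Q}\|_{\exp L,2^lQ}\lesssim l\|b\|_{\BMO}$.
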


\begin{proof}
Let $x\in \Rn$, and let $Q$ be any arbitrary cube containing $x$. It suffices to show that there exists $c_Q$ such that
\begin{align}\label{eq:Cbb}
\mathscr{A}&:=\bigg(\fint_Q |C_b(\widetilde{S}_{\alpha, L})f(z)-c_Q|^{\delta} dz\bigg)^{\frac{1}{\delta}}
\lesssim \|b\|_{\BMO} \big(M_{L\log L}f(x) +M_{\varepsilon}(\widetilde{S}_{\alpha, L}f)(x) \big).
\end{align}
Split $f=f_1+f_2$, where $f_1=f {\bf 1}_{8Q}$. Then, we have
\begin{align}\label{eq:AAA}
\mathscr{A} & \lesssim \bigg(\fint_{Q} |(b(z)-b_Q)\widetilde{S}_{\alpha, L}(f)(z)|^{\delta} dz\bigg)^{\frac{1}{\delta}}
\nonumber\\
&\qquad+ \bigg(\fint_Q |\widetilde{S}_{\alpha, L}((b-b_Q)f_{1})(z)|^{\delta} dz\bigg)^{\frac{1}{\delta}}
\nonumber\\
&\qquad + \bigg(\fint_Q |\widetilde{S}_{\alpha, L}((b-b_Q)f_2)(z)-c_Q|^{\delta} dz\bigg)^{\frac{1}{\delta}}
\nonumber\\
&:=\mathscr{A}_1 + \mathscr{A}_2 + \mathscr{A}_3.
\end{align}
To bound $\mathscr{A}_1$, we choose $r \in (1, \varepsilon/\delta)$. The H\"{o}lder's inequality gives that
\begin{align}\label{eq:A1}
\mathscr{A}_1
& \leq \bigg(\fint_Q |b(z)-b_Q|^{\delta r'} dz\bigg)^{\frac{1}{\delta r'}}
\bigg(\fint_Q |\widetilde{S}_{\alpha, L}f(z)|^{\delta r} dz\bigg)^{\frac{1}{\delta r}}
\nonumber\\
&\lesssim \|b\|_{\BMO} M_{\delta r}(\widetilde{S}_{\alpha, L}f)(x)
\le \|b\|_{\BMO} M_{\varepsilon}(\widetilde{S}_{\alpha, L}f)(x).
\end{align}
Since $\widetilde{S}_{\alpha, L}: L^{1}(\Rn)\rightarrow L^{1,\infty}(\Rn)$ and $0<\delta <1$, there holds
\begin{align}\label{eq:A2}
\mathscr{A}_2 & \lesssim \|\widetilde{S}_{\alpha, L}((b-b_Q)f_1)\|_{L^{1,\infty}(Q, \frac{dx}{|Q|})}
\lesssim \fint_Q |(b-b_Q)f_1|dz
\nonumber\\
&\lesssim \|b-b_Q\|_{\exp L, Q} \|f\|_{L\log L, Q} \lesssim \|b\|_{\BMO} M_{L\log L}(f)(x).
\end{align}
For the last term, we take $c_Q=\widetilde{S}_{\alpha, L}((b-b_Q)f_2)(z_Q)$, where $z_Q$ is the center of $B$. We have
\begin{align}\label{eq:A3}
\mathscr{A}_3 \le \fint_Q |\widetilde{S}_{\alpha, L}((b-b_Q)f_2)(z)-c_Q|\, dz
=:\fint_Q J_Q(z)\, dz \le \bigg(\fint_Q J_Q(z)^2\, dz\bigg)^{\frac12}.
\end{align}
For any cube $Q\subset \Rn$, set $T_Q=Q \times (0, \ell(Q))$. Thus, for any $z \in Q$,
\begin{align}\label{eq:JQJQ}
J_Q(z)^2 & \leq |\widetilde{S}_{\alpha, L}((b-b_Q)f_2)(z)^2 - \widetilde{S}_{\alpha, L}((b-b_Q)f_2)(z_Q)^2|
\nonumber\\
&\leq \iint_{T(2Q)} \Phi \Big(\frac{z-y}{\alpha t}\Big) |Q_{t, L}((b-b_Q)f_2)(y)|^2 \frac{dydt}{t^{n+1}}
\nonumber\\
&\qquad+ \iint_{T(2Q)} \Phi \Big(\frac{z'-y}{\alpha t}\Big) |Q_{t, L}((b-b_Q)f_2)(y)|^2 \frac{dydt}{t^{n+1}}
\nonumber\\
&\qquad + \iint_{\R^{n+1} \setminus T(2Q)} \Big|\Phi \Big(\frac{z-y}{\alpha t}\Big) - \Phi \Big(\frac{z'-y}{\alpha t}\Big)\Big|
|Q_{t^m}((b-b_Q)f_2)(y)|^2 \frac{dydt}{t^{n+1}}
\nonumber\\
&=: J_{Q, 1}(z)+J_{Q, 2}(z)+ J_{Q, 3}(z).
\end{align}
In order to estimate $J_{Q, 1}(z)$, we note that
\begin{align}\label{eq:QPhi}
\fint_Q \Phi \Big(\frac{z-y}{\alpha t}\Big) dz
\leq \frac{1}{|Q|} \int_{\Rn} \Phi \Big(\frac{z-y}{\alpha t}\Big) dz
\lesssim \frac{(\alpha t)^{n}}{|Q|}.
\end{align}
Furthermore, the kernel estimate \eqref{kernelestimate} gives that
\begin{align}\label{eq:QtLbb}
\bigg(\int_{2Q} & |Q_{t, L}((b-b_Q)f_2)(y)|^2\bigg)^{\frac12}
\nonumber\\
&\lesssim \sum_{j\geq 3} \bigg\{\int_{2Q} \bigg[\int_{2^{j+1}Q \setminus 2^j Q} \frac{1}{t^{n}}\bigg(\frac{t}{t+|z-y|}\bigg)^{n+\delta_0}|b-b_Q| |f| dz\bigg]^{2}dy\bigg\}^{\frac12}
\nonumber\\
&\lesssim \sum_{j\geq 3}\bigg\{\int_{2Q}\bigg[\int_{2^{j+1}Q \setminus 2^jQ} \frac{1}{t^n} \bigg(\frac{t}{2^{j}\ell(Q)}\bigg)^{n+\delta_0} |b-b_Q| |f| dz\bigg]^2 dy \bigg\}^{\frac12}
\nonumber\\
&\lesssim \sum_{j\geq 3} \bigg(\frac{t}{2^{j}\ell(Q)}\bigg)^{\delta_0} |Q|^{\frac{1}{2}}\fint_{2^{j+1}Q}|b-b_Q| |f| dz
\nonumber\\
&\lesssim \bigg(\frac{t}{\ell(Q)}\bigg)^{\delta_0} |Q|^{\frac{1}{2}} \sum_{j\geq 0} 2^{-j\delta_0} \|b-b_Q\|_{\exp L, 2^{j+1}Q} \|f\|_{L\log L, 2^{j+1}Q}
\nonumber\\
&\lesssim \bigg(\frac{t}{\ell(Q)}\bigg)^{\delta_0} |Q|^{\frac{1}{2}} \sum_{j\geq 0} 2^{-j\delta_0} j \|b\|_{\BMO} M_{L\log L}f(x)
\nonumber\\
&\lesssim \bigg(\frac{t}{\ell(Q)}\bigg)^{\delta_0} |Q|^{\frac{1}{2}} \|b\|_{\BMO} M_{L\log L}f(x).
\end{align}
Then, gathering \eqref{eq:QPhi} and \eqref{eq:QtLbb}, we obtain
\begin{align}\label{eq:JQ1}
\fint_Q J_{Q, 1}(z)dz
&\leq \iint_{T(2Q)} \bigg(\fint_{Q}\Phi\Big(\frac{z-y}{\alpha t}\Big)dz\bigg) |Q_{t, L}((b-b_Q)f_2)(y)|^{2}\frac{dydt}{t^{n+1}}
\nonumber\\
&\lesssim \frac{1}{|Q|} \int_{0}^{2\ell(Q)} \int_{2Q}|Q_{t, L}((b-b_Q)f_2)(y)|^2 dy \frac{dt}{t}
\nonumber\\
&\lesssim \int_{0}^{2\ell(Q)} \bigg(\frac{t}{\ell(Q)}\bigg)^{2\delta_0} \frac{dt}{t} \, \|b\|_{\BMO}^2 M_{L\log L}f(x)^2
\nonumber\\
&\lesssim \|b\|_{\BMO}^2 M_{L\log L}f(x)^2.
\end{align}
Similarly, one has
\begin{align}\label{eq:JQ2}
\fint_Q J_{Q, 2}(z)dz \lesssim \|b\|_{\BMO} M_{L\log L}f(x).
\end{align}
To control $J_{Q, 3}$, invoking \cite[eq. (35)]{BD}, we have
\begin{align}\label{eq:JQ3}
J_{Q, 3}(z) &\lesssim \sum_{j\geq 0} 2^{-j\delta_{0}} \bigg(\fint_{2^j Q} |b-b_Q| |f| dz \bigg)^2
\nonumber\\
&\lesssim \sum_{j \geq 0} 2^{-j\delta_0} \|b-b_Q\|^{2}_{\exp L, 2^j Q} \|f\|_{L\log L,2^j Q}
\nonumber\\
&\lesssim \sum_{j\geq 0} 2^{-j\delta_0} j \|b\|_{\BMO} M_{L\log L}f(x)
\lesssim \|b\|_{\BMO} M_{L\log L}f(x).
\end{align}
Combining \eqref{eq:A3}, \eqref{eq:JQJQ}, \eqref{eq:JQ1}, \eqref{eq:JQ2} and \eqref{eq:JQ3}, we conclude that
\begin{equation}\label{eq:A33}
\mathscr{A}_3 \lesssim \|b\|_{\BMO} M_{L\log L}f(x).
\end{equation}
Therefore, \eqref{eq:Cbb} immediately follows from \eqref{eq:AAA}, \eqref{eq:A1}, \eqref{eq:A2} and \eqref{eq:A33}.
\end{proof}

\begin{lemma}\label{lem:WEC}
For any $w \in A_{\infty}$ and $b\in \BMO$,
\begin{equation}\label{eq:SLML}
\begin{split}
\sup_{t>0} \Phi(1/t)^{-1} & w(\{x\in \Rn: |C_b(S_{\alpha,L})f(x)|>t\})
\\
&\lesssim \sup_{t>0} \Phi(1/t)^{-1} w(\{x\in \Rn: M_{L\log L}f(x)>t\}),
\end{split}
\end{equation}
for all $f \in L^{\infty}_c(\Rn)$.
\end{lemma}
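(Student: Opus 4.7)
The plan is to derive \eqref{eq:SLML} by a two-step Fefferman-Stein sharp-maximal scheme in the Orlicz $\Phi$-weak endpoint, combined with the pointwise bounds of Lemmas \ref{lem:MMf} and \ref{lem:MSL}. Via \eqref{eq:SSS}, $S_{\alpha,L}$ may be freely replaced by $\widetilde{S}_{\alpha,L}$. The key auxiliary tool I would record is the following weighted Fefferman-Stein inequality: for every $w\in A_\infty$, every $0<\delta<\varepsilon<1$ and every measurable $h$ for which the left side is finite a priori,
\[
\sup_{t>0}\Phi(1/t)^{-1}w(\{|h|>t\}) \lesssim \sup_{t>0}\Phi(1/t)^{-1}w(\{M_\delta^\#(h)>t\}),
\]
together with the companion statement in which $|h|$ on the left is replaced by $M_\varepsilon h$. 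Both descend from the standard weighted good-$\lambda$ inequality $w(\{|h|>2\lambda,\,M_\delta^\# h\le\gamma\lambda\})\lesssim\gamma^{\tau(w)}w(\{|h|>\lambda\})$ valid for $A_\infty$-weights, by integrating against the monotone function $\lambda\mapsto\Phi(1/\lambda)^{-1}$; the $\Delta_2$ property of $\Phi$ is essential to absorb the $2\lambda$-shift.

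Applying the first form to $h=C_b(\widetilde{S}_{\alpha,L})f$ and inserting Lemma \ref{lem:MSL} yields
\[
\sup_t\Phi(1/t)^{-1}w(\{|C_b(\widetilde{S}_{\alpha,L})f|>t\}) \lesssim \|b\|_{\BMO}\sup_t\Phi(1/t)^{-1}w(\{M_{L\log L}f+M_\varepsilon(\widetilde{S}_{\alpha,L}f)>t\}).
\]
The level set splits as $\{M_{L\log L}f>t/2\}\cup\{M_\varepsilon(\widetilde{S}_{\alpha,L}f)>t/2\}$, and the $\Delta_2$ property of $\Phi$ absorbs the factor of $2$. The first piece is already the target. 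For the second I apply the companion Fefferman-Stein inequality with $h=\widetilde{S}_{\alpha,L}f$, invoke the pointwise bound $M_\delta^\#(\widetilde{S}_{\alpha,L}f)\lesssim\alpha^{2n}Mf$ from Lemma \ref{lem:MMf}, and then use the pointwise comparison $Mf(x)\le M_{L\log L}f(x)$, which holds because $\Phi(t)\ge t$ forces the Luxemburg norm to dominate the $L^1$ average on every cube. All pieces are thus controlled by the right-hand side of \eqref{eq:SLML}.

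The main obstacle is the a priori finiteness hypothesis required to run the good-$\lambda$ argument: for $w\in A_\infty$ and $f\in L^\infty_c(\Rn)$ the supremum on the left of \eqref{eq:SLML} is not obviously finite. This is handled by a truncation scheme, replacing $f$ by $f\mathbf{1}_{B(0,R)}$ and $w$ by $\min(w,N)\mathbf{1}_{B(0,R)}$, running the estimate with constants independent of $R$ and $N$, and letting $R,N\to\infty$ by monotone convergence. A secondary technical point is verifying the $\Phi$-weak versions of the sharp-maximal inequalities stated above, which proceeds by adapting the classical Garc\'ia-Cuerva--Rubio de Francia template to the Orlicz endpoint, crucially exploiting the doubling of $\Phi$ and the monotonicity of $\lambda\mapsto\Phi(1/\lambda)^{-1}$.
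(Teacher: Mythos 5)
Your proposal is correct and follows essentially the same route as the paper: pass to $\widetilde{S}_{\alpha,L}$ via \eqref{eq:SSS}, apply the weak-type Fefferman--Stein sharp-maximal inequality with $\varphi(\lambda)=\Phi(1/\lambda)^{-1}$ twice, insert Lemmas \ref{lem:MSL} and \ref{lem:MMf}, split the level set using the doubling of $\Phi$, and finish with $Mf\lesssim M_{L\log L}f$. Your truncation remark even treats the a priori finiteness hypothesis more carefully than the paper, which simply assumes the right-hand side of \eqref{eq:SLML} is finite.
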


\begin{proof}
Recall that the weak type Fefferman-Stein inequality:
\begin{align}\label{WF-S}
\sup_{\lambda>0} \varphi(\lambda)\omega(\{x\in \Rn:M_{\delta}f(x)>\lambda\})
\leq \sup_{\lambda>0}\varphi(\lambda)\omega(\{x\in \Rn:M^{\sharp}_{\delta}f(x)>\lambda\})
\end{align}
for all function $f$ for which the left-hand side is finite, where $\varphi:(0,\infty)\rightarrow(0,\infty)$ is doubling. We may assume that the right-hand side of \eqref{eq:SLML} is finite since otherwise there is nothing to be proved. Now by the Lebesgue diffentiation theorem we have
\begin{align*}
\mathscr{B}:=&\sup_{t>0} \Phi(1/t)^{-1} w(\{x\in \Rn: |C_b(S_{\alpha,L})f(x)|>t\})
\\
=&\sup_{t>0} \Phi(1/t)^{-1} w(\{x\in \Rn: |C_b(\widetilde{S}_{\alpha,L})f(x)|>t\})
\\
\leq & \sup_{t>0} \Phi(1/t)^{-1} w(\{x\in \Rn: M_{\delta}(C_b(\widetilde{S}_{\alpha,L}))f(x)|>t\}).
\end{align*}
Then Lemma \ref{lem:MMf}, Lemma \ref{lem:MSL} and \eqref{WF-S} give that
\begin{align*}
\mathscr{B} & \lesssim \sup_{t>0} \Phi(1/t)^{-1} w(\{x\in \Rn: M_{\delta}^{\#}(C_b(\widetilde{S}_{\alpha, L})f)(x) >t\})
\\
&\lesssim \sup_{t>0} \Phi(1/t)^{-1} w(\{x\in \Rn: M_{L\log L}f(x) +M_{\varepsilon}(\widetilde{S}_{\alpha, L}f)(x)>c_0t\})
\\
&\lesssim \sup_{t>0} \Phi(1/t)^{-1} w(\{x\in \Rn:M_{L\log L}f(x)>t\})
\\
&\quad + \sup_{t>0} \Phi(1/t)^{-1} w(\{x\in \Rn:M_{\varepsilon}(\widetilde{S}_{\alpha, L}f)(x)>t\})
\\
&\lesssim \sup_{t>0} \Phi(1/t)^{-1} w(\{x\in \Rn:M_{L\log L}f(x)>t\})\\
&\quad +\sup_{t>0} \Phi(1/t)^{-1} w(\{x\in \Rn:M^{\sharp}_{\varepsilon}(\widetilde{S}_{\alpha, L}f)(x)>t\})
\\
&\lesssim \sup_{t>0} \Phi(1/t)^{-1} w(\{x\in \Rn:M_{L\log L}f(x)>t\})
\\
&\quad + \sup_{t>0} \Phi(1/t)^{-1} w(\{x\in \Rn:M(f)(x)>t\})
\\
&\lesssim \sup_{t>0} \Phi(1/t)^{-1} w(\{x\in \Rn:M_{L\log L}f(x)>t\}).
\end{align*}
The proof is complete.
\end{proof}

\begin{proof}[\textbf{Proof of Theorem \ref{thm:SbA1}.}]
Let $w \in A_1$. By homogeneity, it is enough to prove
\begin{align}\label{eq:Cbt=1}
w(\{x\in \Rn: C_b(S_{\alpha,L})f(x)>1\}) \lesssim \int_{\Rn} \Phi(|f(x)|) w(x) dx.
\end{align}
Let us recall a result from \cite[Lemma~2.11]{CY} for $m=1$. For any $w \in A_1$,
\begin{align}\label{eq:MLL}
w(\{x\in \Rn: M_{L\log L}f(x)>t\}) \lesssim \int_{\Rn} \Phi \bigg(\frac{|f(x)|}{t}\bigg) w(x) dx, \quad\forall t>0.
\end{align}
Since $\Phi$ is submultiplicative, Lemma \ref{lem:WEC} and \eqref{eq:MLL} imply that
\begin{align*}
&w(\{x\in \Rn: C_b(S_{\alpha,L})f(x)>1\})
\\
&\qquad \lesssim \sup_{t>0} \Phi(1/t)^{-1} w(\{x\in \Rn: |C_b(S_{\alpha,L})f(x)|>t\})
\\
&\qquad \lesssim \sup_{t>0} \Phi(1/t)^{-1} w(\{x\in \Rn:M_{L\log L}f(x)>t\})
\\
&\qquad \lesssim \sup_{t>0} \Phi(1/t)^{-1} \int_{\Rn} \Phi \bigg(\frac{|f(x)|}{t}\bigg) w(x) dx
\\
&\qquad \leq \sup_{t>0} \Phi(1/t)^{-1} \int_{\Rn} \Phi (|f(x)|) \Phi(1/t) w(x)dx
\\
&\qquad\le \int_{\Rn} \Phi(|f(x)|) w(x) dx.
\end{align*}
This shows \eqref{eq:Cbt=1} and hence Theorem \ref{thm:SbA1}.
\end{proof}


\end{document}